\documentclass[a4paper,10pt,psamsfonts,draft]{amsart}

\usepackage{todonotes}
\presetkeys{todonotes}{fancyline, color=blue!30}{}
\usepackage{enumitem}
%\usepackage{a4wide}

% language support
\usepackage[utf8]{inputenc} % direct use of umlauts
% \usepackage{ngerman}        % German language support
% \usepackage[T1]{fontenc}    % font expansion for umlauts

% AMS-Packages
\usepackage{amsmath}
\usepackage{amstext}
\usepackage{amsfonts}
\usepackage{amsthm}
\usepackage{amssymb}
%\usepackage{bbm}
%\usepackage{stmaryrd}

% graphics
%\usepackage{graphicx}
%\usepackage{wrapfig}
%\usepackage{rotating}
%\usepackage{subfigure}
%\usepackage{psfrag}

\usepackage{dsfont}            % mathds-Buchstaben, ausserdem Einsfunktion mit \mathds 1
\usepackage{mathrsfs}          % fuer Borelalgebra-B mittels \mathscr
\usepackage{tensor}            %\tensor[_a]{\Big\langle...\Big\rangle}{_b}

\usepackage{enumerate}

% tables
%\usepackage{booktabs}

% generating hyperlinks in the output file
\usepackage{hyperref}
\hypersetup{colorlinks}
\usepackage[toc,page]{appendix}
\usepackage{comment}

% For coloring comments in the text
\usepackage{color}

% page formatting
% \parindent0cm

% shortcuts for frequently used commands

\newcommand{\R}{{\mathds{R}}}

\newcommand{\E}{{\mathds{E}}}
\newcommand{\N}{{\mathds{N}}}

 % for the filtration
%\renewcommand{\P}{{\mathbf{P}}} % for the probability measure
\renewcommand{\P}{{\mathds{P}}} % for the probability measure

\newcommand{\LB}{{\mathcal{L}}}

\newcommand{\diffout}[1]{\,\mathrm{d}#1}
\newcommand{\diffin}[1]{\mathrm{d}#1}

\newcommand{\id}{\mathrm{id}}

% some more shortcuts

\newcommand{\bD}{{\mathds D}}
\newcommand{\bE}{{\mathds E}}

\newcommand{\bM}{{\mathds M}}
\newcommand{\bN}{{\mathds N}}

\newcommand{\bP}{{\mathds P}}

\newcommand{\bR}{{\mathds R}}

\newcommand{\cB}{\ensuremath{\mathcal B}}
\newcommand{\cC}{\ensuremath{\mathcal C}}

\newcommand{\cF}{\ensuremath{\mathcal F}}

\newcommand{\cL}{\ensuremath{\mathcal L}}

\newcommand{\cN}{\ensuremath{\mathcal N}}
\newcommand{\cO}{\ensuremath{\mathcal O}}
\newcommand{\cP}{\ensuremath{\mathcal P}}

\newcommand{\cS}{\ensuremath{\mathcal S}}

\newcommand{\pred}{\ensuremath{\operatorname{pr}}}
\newcommand{\dl}{\mathrm{d}}
\newcommand{\triple}{{\vert\kern-0.25ex\vert\kern-0.25ex\vert}}

\newcommand{\Lip}{\operatorname{Lip}}

% the space of integer-valued measures
\newcommand{\NN}{\mathbf N}

% a nicer shape for \geq and \leq
\renewcommand{\geq}{\ensuremath{\geqslant}}
\renewcommand{\leq}{\ensuremath{\leqslant}}

% theorem environments English/ German
\theoremstyle{plain}

\newtheorem{definition}{Definition}[section]
\newtheorem{theorem}[definition]{Theorem}
\newtheorem{lemma}[definition]{Lemma}
\newtheorem{corollary}[definition]{Corollary}
\newtheorem{prop}[definition]{Proposition}
\newtheorem{assumption}[definition]{Assumption}

\newtheorem{notation}[definition]{Notation}

\theoremstyle{definition}
\newtheorem{remark}[definition]{Remark}

\newtheorem{example}[definition]{Example}

\begin{document}

\title[Malliavin regularity and weak approximation]
{Malliavin regularity and weak approximation of semilinear SPDE with Lévy noise}

\author[A.~Andersson]{Adam Andersson}
\address{
Adam Andersson\\
Syntronic Software Innovations, Lindholmspiren 3B, SE-417 56 Gothenburg, Sweden 
}
\email{adan@syntronic.com }

\author[F.~Lindner]{Felix Lindner}
\address{Felix Lindner\\
University of Kassel\\
Institute of Mathematics\\
Heinrich-Plett-Str.~40,
34132~Kassel, Germany}
\email{lindner@mathematik.uni-kassel.de}

\begin{abstract}
%We investigate the weak rate of convergence of a space-time discrete approximation scheme for semilinear parabolic stochastic evolution equations driven by additive square-integrable Lévy noise.
%We investigate the weak approximation error for space-time discretizations of semilinear parabolic stochastic evolution equations driven by additive square-integrable Lévy noise.
We investigate the weak order of convergence for space-time discrete approximations of semilinear parabolic stochastic evolution equations driven by additive square-integrable Lévy noise.
To this end, the Malliavin regularity of the solution is analyzed and recent results on refined Malliavin-Sobolev spaces from the Gaussian setting are extended to a Poissonian setting.
For a class of path-dependent test functions, we obtain that the weak rate of convergence is twice the strong rate.
%It is shown that the solution to stochastic evolution equations in Hilbert space, driven by a square integrable Levy process without Gaussian part, is regular in the sense of Malliavin. Moreover, recent results on refined Sobolev-Malliavin spaces from the Gaussian setting are extended to the setting of the present paper. They are applied to prove essentially sharp weak rate of convergence of a full discretization of semilinear stochastic reaction-diffusion equations with additive noise. Discretization is done by means of the finite element method and a backward Euler time-stepping.
\end{abstract}

\keywords{Malliavin calculus, Poisson random measure, Lévy process, stochastic partial differential equation, numerical approximation, weak convergence}
\subjclass[2010]{60H15, 60G51, 60H07, 65C30, 65M60} % to be checked

\maketitle

%\tableofcontents

\section{Introduction}

Stochastic partial differential equations (SPDE) with Lévy noise occur in various applications, 
ranging from environmental pollution models \cite{kallianpur1995} to the statistical theory of turbulence \cite{birnir2013}, to mention only two examples.
In the context of the numerical approximation of the solution processes of such equations, the quantity of interest is typically the expected value of some functional of the solution
and one is thus interested in the weak convergence rate of the considered numerical scheme.
While the weak convergence analysis for numerical approximations of SPDE with Gaussian noise is meanwhile relatively far developed, see, e.g., \cite{AnderssonKovacsLarsson,
AnderssonKruseLarsson,
AnderssonLarsson2016,
Brehier2017,
BrehierDebussche2017,
BrehierGoudenege2018,
BrehierHairerStuart, 
Conus2014,
debussche2011,
%Hausenblas2003,
Hausenblas2010,
Hefter2016,
JentzenNauroisWelti2015,
JentzenNauroisWelti2017,
Jentzen2015,
LangPettersson,
WangGan}, 
available results for non-Gaussian Lévy noise have been restricted to linear equations so far \cite{AnderssonLindner2017a,Barth2016,KovLinSch2015,LindnerSchilling}.
In this article, we analyze for the first time the weak convergence rate of numerical approximations for a class of semi-linear SPDE with non-Gaussian Lévy noise.

We consider equations of the type
\begin{align}\label{eq:SPDE_additive}
  \diffout X(t)+AX(t)\diffout t
  =
  F(X(t))\diffout t
  +
  \diffout L(t),
  \quad
  t\in[0,T],
  \quad
  X(0)=X_0,
\end{align}
where $X$ takes values in a separable real Hilbert space $H$ and $A\colon D(A)\subset H\to H$ is an unbounded linear operator such that $-A$ generates an analytic semigroup $(S(t))_{t\geq0}\subset\cL(H)$.
By $\dot H^\rho$, $\rho\in\bR$, we denote the smoothness spaces associated to $A$ via $\dot H^{\rho}=D(A^{\frac\rho2})$, see Subsection~\ref{subsec:operators} for details.
The driving Lévy process $L=(L(t))_{t\in[0,T]}$ is assumed to be $\dot H^{\beta-1}$-valued for some regularity parameter $\beta\in (0,1]$, square-integrable with mean zero, and of pure jump type.
The nonlinearity $F\colon H\to\dot H^{\beta-1}$ is supposed to satisfy suitable Lipschitz conditions. The precise assumptions are stated in Subsection~\ref{subsec:Malliavin_prel} and \ref{subsec:assumptionSPDE}.
We remark that for a strong convergence analysis one could allow $F$ to be only $\dot H^{\beta-2}$-valued, but to obtain a weak convergence rate which is twice the strong rate we need to assume more than that.
Our main example for the abstract equation \eqref{eq:SPDE_additive} is the semilinear heat equation
\begin{equation}\label{eq:SPDE_example}
\left\{
\begin{aligned}
&\dot u(t,\xi)-\Delta_\xi\;\! u(t,\xi)= f(u(t,\xi))+\dot \eta(t,\xi),\quad &&(t,\xi)\in[0,T]\times \mathcal O,\\
&u(t,\xi)=0, &&(t,\xi)\in[0,T]\times\partial\mathcal O,\\
&u(0,\xi)=u_0(\xi), &&\xi\in\mathcal O.
\end{aligned}
\right.
\end{equation}
Here $\mathcal O\subset \mathds R^{d}$ is an open, bounded, convex, polygonal/polyhedral domain, $d\in\{1,2,3\}$, $f\colon\bR\to\bR$ is twice continuously differentiable with bounded derivatives, and $\dot\eta$ is an impulsive space-time noise, cf.~Example~\ref{ex:A}.
The discretization in space is performed by a standard finite element method and in time by an implicit Euler method, cf.~Subsection~\ref{subsec:mainresult}.

%There are three main approaches to analyzing the weak error of numerical approximations of SPDE: %\todo{add third approach}
%A first one based on It\^o's formula and backward Kolmogorov equations \cite{AnderssonLarsson2016,BrehierHairerStuart, Conus2014,debussche2011,Hefter2016,Jentzen2015,WangGan,KovLinSch2015,LindnerSchilling},
%a second one avoiding Kolmogorov equations and relying solely on duality principles in Malliavin calculus \cite{AnderssonKovacsLarsson,AnderssonKruseLarsson,AnderssonLindner2017a,Barth2016,kruse2013}, and a third one exploiting regularity properties of an underlying Itô map \cite{BrehierHairerStuart}.
%In this paper we follow the second approach, using the results on Hilbert space-valued Poisson Malliavin calculus from \cite{AnderssonLindner2017a}.
Several approaches to analyzing the weak error of numerical approximations of SPDE can be found in the literature.
We follow the the approach from 
%\cite{AnderssonKovacsLarsson,AnderssonKruseLarsson,AnderssonLindner2017a,Barth2016,kruse2013},  
\cite{AnderssonKovacsLarsson,AnderssonKruseLarsson,AnderssonLindner2017a,Barth2016,kruse2013}, 
which is based on duality principles in Malliavin calculus.
We remark that Malliavin calculus for Poisson or Lévy noise is fundamentally different from that for Gaussian noise.
Our analysis heavily relies on the results on Hilbert space-valued Poisson Malliavin calculus from \cite{AnderssonLindner2017a}.
Following the ideas in \cite{Last2014,Picard1996a}, the Malliavin derivative in \cite{AnderssonLindner2017a} is in fact a finite difference operator
\begin{align}\label{eq:Dintro}
D\colon L^0(\Omega;H)\to L^0(\Omega\times[0,T]\times U;H),
%D\colon L^0(\Omega,\mathcal F,\mathds P;H)\to L^0(\Omega\times[0,T]\times U,\mathcal F\otimes\mathcal B([0,T]\times U),\mathds P\otimes\lambda\otimes\nu;H),
\end{align}
where $(\Omega,\cF,\bP)$ is the underlying probability space 
and $U=\dot H^{\beta-1}$ is the state space of the Lévy process $L$, endowed with the Borel-$\sigma$-algebra $\cB(U)$ and the Lévy measure $\nu$ of $L$.
%, $L^0(\Omega,H)=L^0(\Omega,\mathcal F,\mathds P;H)$ and 
%$L^0(\Omega\times[0,T]\times U;H)=L^0(\Omega\times[0,T]\times U,\mathcal F\otimes\mathcal B([0,T]\times U),\mathds P\otimes\dl t\otimes\nu;H)$
Starting with the operator \eqref{eq:Dintro}, one can in a second step define Malliavin-Sobolev-type spaces as classes of $H$-valued random variables satisfying certain integrability properties together with their Malliavin derivatives, cf.~Subsections~\ref{subsec:Malliavin_prel} and \ref{subsec:regX}.

In this article, we extend the strategy for semilinear SPDE from \cite{AnderssonKovacsLarsson,AnderssonKruseLarsson} to Poisson noise and analyze the weak approximation error in a framework of Gelfand triples of refined Malliavin-Sobolev spaces $\bM^{1,p,q}(H)\subset L^2(\Omega;H)\subset(\bM^{1,p,q}(H))^*$, see Subsection~\ref{subsec:regX} for the definition of these spaces. 
We first investigate in Section~\ref{sec:reg} the Malliavin regularity of the mild solution $X=(X(t))_{t\geq0}$ to Eq.~\eqref{eq:SPDE_additive}.
We start by proving in Proposition~\ref{prop:DX} that the Malliavin derivative $DX(t)$ of $X(t)$ satisfies for all $t\in[0,T]$ the equality
\begin{equation}\label{eq:DX2}
\begin{aligned}
D_{s,x}X(t)
&=
\mathds{1}_{s\leq t}\cdot\int_s^tS(t-r)\big[F\big(X(r)+D_{s,x}X(r)\big)-F\big(X(r)\big)\big]\,\dl r\\
&\quad +\mathds{1}_{s\leq t}\cdot S(t-s)x
%D_{s,x}X(t)
%&=
%\mathds{1}_{s\leq t}\cdot\Big(\int_s^tS(t-r)\big[F\big(X(r)+D_{s,x}X(r)\big)-F\big(X(r)\big)\big]\,\dl r+
%S(t-s)x\Big)
\end{aligned}
\end{equation}
$\bP\otimes\dl s\otimes\nu(\dl x)$-almost everywhere on $\Omega\times[0,T]\times U$. The terms on the right hand side are understood to be zero for $s>t$. Based on 
this equality 
%Eq.~\eqref{eq:DX2} 
we derive in Proposition~\ref{thm:reg2} and \ref{thm:reg3} suitable integrability and time regularity properties of $DX(t)$ by using  Gronwall-type arguments.
The regularity results from Section~\ref{sec:reg} are then used in  Section~\ref{sec:weak} for the analysis of the weak  error $\bE [f(\tilde X_{h,k})-f(X)]$, where $X_{h,k}=(\tilde X_{h,k}(t))_{t\in[0,T]}$, $h,k\in(0,1)$, are time interpolated numerical approximations of $X$. We use a standard finite element method with maximal mesh size $h$ for the discretization in space and an implicit Euler method with step size $k$ for the discretization in time. 
For finite Borel measures $\mu_1,\ldots,\mu_n$ on $[0,T]$, we consider path-dependent functionals $f\colon L^1([0,T],\sum_{i=1}^n\mu_i;H)\to\bR$ of the form 
$f(x)=\varphi\big(\int_{[0,T]}x(t)\,\mu_1(\dl t),\ldots,\int_{[0,T]}x(t)\,\mu_n(\dl t)\big)$, where
$\varphi\colon\bigoplus_{i=1}^n\!H\to\bR$ is assumed to be Fréchet differentiable with globally Lipschitz continuous derivative mapping $\varphi'\colon \bigoplus_{i=1}^n\!H\to\cL\big(\bigoplus_{i=1}^n\!H,\bR\big)$.
%The path-dependent test funtion $f\colon L^2([0,T],\mu;H)\to\bR$ is assumed to be Fréchet differentiable with globally Lipschitz continuous derivative, $\mu$ being a finite Borel measure on $[0,T]$.
Our main result, Theorem~\ref{thm:weak2}, states that for all $\gamma\in[0,\beta)$ there exists a finite constant $C$ such that
\begin{align}\label{eq:weak_conv}
|\bE[f(\tilde X_{h,k})-f(X)]|\leq C\,(h^{2\gamma}+k^\gamma),\quad h,k\in(0,1).
\end{align}
For the considered class of test functions, the weak rate of convergence is thus twice the strong rate. 
%cf.~Proposition~\ref{prop:strong_error_semilin}.
The idea of the proof 
%of Theorem~\ref{thm:weak2} 
is to exploit the Malliavin regularity of $X$ and $\tilde X_{h,k}$ in order to estimate the weak error $|\bE[f(\tilde X_{h,k})-f(X)]|$ in terms of the norm of the error $\tilde X_{h,k}(t)-X(t)$ in the dual space $(\bM^{1,p,q}(H))^*$, for suitable exponents $p,q\in[2,\infty)$. 
As an exemplary application, we consider in Corollary~\ref{cor:cov}  the approximation of covariances $\mathrm{Cov}(\langle X(t_1),\psi_1\rangle,\langle X(t_2),\psi_2\rangle)$, $t_1,t_2\in[0,T]$, $\psi_1,\psi_1\in H$ of the solution process.

We remark that weak error estimates for SPDE involving path-dependent functionals have been derived so far only in \cite{AnderssonKovacsLarsson,AnderssonLindner2017a,BrehierHairerStuart}.
Our setting allows for integral-type functionals as well as for functionals of the form $f(x)=\varphi(x(t_1),\ldots,x(t_n))$, where $x=(x(t))_{t\in[0,T]}$ is  an $H$-valued path, $0\leq t_1\leq\ldots\leq t_n\leq T$, and $\varphi\colon\bigoplus_{j=1}^n\!H\to\bR$. 
%is Fréchet differentiable with Lipschitz continuous derivative.
%\todo{maybe mention simulation of Levy noise (barth, hausenblas)}

%For semilinear equations with non-Gaussian Levy noise, to the best of our knowledge, no results on weak convergence is available in the literature. Weak convergence rates for linear equations were first obtained in \cite{LindnerSchilling} for impulsive noise followed by \cite{KovLinSch2015} for square integrable Levy noise. Both papers utilizes the standard Markovian approach to weak convergence using the Kolmogorov equation. In \cite{Barth2016} the Malliavin calculus approach to weak convergence analysis for linear equations from \cite{kruse2013}, for Gaussian equations, was used for square integrable Poisson noise. Finally, in \cite{AnderssonLindner} the authors prove weak rate of convergence for linear equations driven by $\alpha$-stable noise using Malliavin calculus.

%The method of proof for the weak convergence follows that introduced in \cite{AnderssonKruseLarsson} and which was further refined in \cite{AnderssonKovacsLarsson}. Other papers on weak convergence for additive noise is \cite{Brehier, BrehierHairerStuart, Brehier3, Wang2016, WangGan}. The only numerical results are found in \cite{LangPettersson}.

The paper is organized as follows: 
In Section~\ref{sec:prel} we collect some general notation (Subsection~\ref{subsec:operators}), introduce the precise assumptions on the Lévy process $L$ (Subsection~\ref{subsec:square}), and review fundamental concepts and results from Hilbert space-valued Poisson Malliavin calculus (Subsection~\ref{subsec:Malliavin_prel}).
Section~\ref{sec:reg} is concerned with the Malliavin regularity of the mild solution $X$ to Eq.~\eqref{eq:SPDE_additive}. 
%Here we also state in detail our assumptions on the considered equation (Subsection~\ref{subsec:assumptionSPDE}).
%Finally, in Section~\ref{sec:weak} we present our weak convergence analysis for a class space-time discrete approximations scheme for Eq.~\eqref{eq:SPDE_additive}. In particular, the main resu
Here we first describe in detail our assumptions on the considered equation (Subsection~\ref{subsec:assumptionSPDE}) before we analyse the regularity of $X$ (Subsection~\ref{subsec:regX}) and derive some auxiliary results concerning refined Malliavin-Sobolev spaces (Subsection~\ref{subsec:auxiliary}). The weak convergence analysis is found in Section~\ref{sec:weak}, where we present the numerical scheme and our main result (Subsection~\ref{subsec:mainresult}), analyze the regularity of the approximation process (Subsection~\ref{subsec:weak2}) as well as convergence in negative order Malliavin-Sobolev spaces (Subsection~\ref{subsec:weak3}), and finally prove the main result by combining the results previously collected (Subsection~\ref{subsec:weak4}.)

\section{Preliminaries}
\label{sec:prel}

\subsection{General notation}
\label{subsec:operators}
If $(U,\|\cdot\|_U,\langle\cdot,\cdot\rangle_U)$ and $(V,\|\cdot\|_V,\langle\cdot,\cdot\rangle_V)$ are separable real Hilbert spaces, we denote by $\LB(U,V)$ and $\LB_2(U,V)\subset \LB(U,V)$ the spaces of bounded linear operators and Hilbert-Schmidt operators from $U$ to $V$, respectively. 
By $\cC^1(U,V)$ we denote the space of Fréchet differentiable functions $f\colon U\to V$ with continuous derivative $f'\colon U\to\cL(U,V)$. % Here, strong continuity of $f'$ means that $H\ni x\mapsto f'(x)u\in V$ is continuous for all $u\in U$.
In the special case $V=\bR$ we identify $\cL(U,\bR)$ with $U$ via the Riesz isomorphism and consider $f'$ as a $U$-valued mapping. 
The Lipschitz spaces 
\begin{align*}
\Lip^0(U,V)&:=\{f\in \cC(U,V):|f|_{\Lip^0(U,V)}<\infty\},\\
\Lip^1(U,V)&:=\{f\in \cC^1(U,V):|f|_{\Lip^0(U,V)}+|f|_{\text{Lip}^1(U,V)}<\infty\},
\end{align*}
are defined in terms of the semi-norms
\begin{align*}
|f|_{\Lip^0(U,V)}&:=
%\sup_{x,y\in U,\,x\neq y}\frac{\|f(x)-f(y)\|_V}{\|x-y\|_U},\\
\sup\Big(\Big\{\tfrac{\|f(x)-f(y)\|_V}{\|x-y\|_U}:x,y\in U,\,x\neq y\Big\}\cup\{0\}\Big),\\
|f|_{\Lip^1(U,V)}&:=
%\sup_{x,y\in U,\,x\neq y}\frac{\|f'(x)-f'(y)\|_{\cL(U,V)}}{\|x-y\|_U},
\sup\Big(\Big\{\tfrac{\|f'(x)-f'(y)\|_{\cL(U,V)}}{\|x-y\|_U}:x,y\in U,\,x\neq y\Big\}\cup\{0\}\Big),
\end{align*}
compare, e.g., \cite[Sec.~1.2]{Conus2014}. We also use the norm  $\|f\|_{\Lip^0(U,V)}:=\|f(0)\|_V+|f|_{\Lip^0(U,V)}$.
%\fbox{(brauche das für Banachräume)}
If $(S,\cS,m)$ is a $\sigma$-finite measure space and $(X,\|\cdot\|_X)$ is a Banach space,
we denote by $L^0(S;X):=L^0(S,\cS,m;X)$ the space of (equivalence classes of) strongly $\cS$-measurable functions $f\colon S\to X$. As usual, we identify functions which coincide $m$-almost everywhere. The space $L^0(S;X)$ is endowed with the topology of local convergence in measure.
For $p\in[1,\infty]$, we denote by $L^p(S;X):=L^p(S,\cS,m;X)$ the subspace of $L^0(S;X)$ consisting of all (equivalence classes of) strongly $\cS$-measurable mappings $f\colon S\to X$ such that $\|f\|_{L^p(S;X)}:=\big(\int_S\|f(s)\|_X^p\,m(\dl s)\big)^{1/p}<\infty$ if $p\in[1,\infty)$ and  $\|f\|_{L^\infty(S;X)}:=\operatorname{ess\,sup}_{s\in S}\|f(s)\|_X<\infty$ if $p=\infty$. By $\lambda$ we denote one-dimensional Lebesgue measure and we sometimes also write $\lambda(\dl t)$, $\dl t$, $\lambda(\dl s)$, $\dl s$ etc.\ in place of $\lambda$ to improve readability.
\color{blue}
\color{black}

%\color{gray}
%\todo{using Gateaux spaces leads to stronger assumptions than actually needed. Therefore I suggest to use the Lipschitz spaces above. (Note, e.g., that $\mathcal G^1_b\cap\mathcal G^2_b\subset Lip^1$).}
 
%For $\phi\in\cG^1(U,V)$ and $\psi\in\cG^2(U,V)$ we introduce the seminorms
%\begin{align*}
%  |\phi|_{\cG_{\mathrm{b}}^1(U,V)}
%  =
%  \sup_{x,u\in U}
%  \frac{
%    \|\phi'(x)u\|_V
%  }{
%    \|u\|_U
%  },
%  \quad
%  |\psi|_{\cG_{\mathrm{b}}^2(U,V)}
%  =
%  \sup_{x,u_1,u_2\in U}
%  \frac{
%    \|\psi''(x)(u_1,u_2)\|_V
%  }{
%    \|u_1\|_U\|u_2\|_U
%  }.
%\end{align*}
%By $\cG_{\mathrm b}^{i}(U,V)\subset \cG^i(U,V)$, $i=1,2$, we denote the subspace consisting of all $\phi\in\cG^i(U,V)$ satisfying $|\phi|_{\cG_{\mathrm{b}}^i(U,V)}<\infty$. Notice that mappings in $\cG_{\mathrm{b}}^1(U,V)$ and $\cG_{\mathrm{b}}^2(U,V)$ are of linear and quadratic growth, respectively. 
%\color{black}

\subsection{L\'evy processes and Poisson random measures}
\label{subsec:square}

Here we describe in detail the setting concerning the driving process $L$ in Eq.~\eqref{eq:SPDE_additive}.
Our standard reference for Hilbert space-valued Lévy processes is \cite{PesZab2007}.

\begin{assumption}\label{setting}
The following setting is considered throughout the article.
\begin{itemize}[leftmargin=7mm]
\item 
$(\Omega,\cF,\P)$ is a complete probability space. The $\sigma$-algebra $\cF$ coincides with the $\P$-completion of the $\sigma$-algebra $\sigma(L(t):t\in[0,T])$ generated by the Lévy process $L$ introduced below.
\item
$L=(L(t))_{t\in[0,T]}$ is a Lévy process defined on $(\Omega,\cF,\P)$, 
taking values in a separable real Hilbert space $(U,\|\cdot\|_U,\langle\cdot,\cdot\rangle_U)$.
Here $T\in(0,\infty)$ is fixed.
We assume that $L$ is square-integrable with mean zero, i.e., $L(t)\in L^2(\Omega;U)$ and $\bE( L(t))=0$, and that the Gaussian part of $L$ is zero. 
\item
$(H,\|\cdot\|,\langle\cdot,\cdot\rangle)$ is a further separable real Hilbert space.
\end{itemize}
\end{assumption}

The jump intensity measure (Lévy measure) $\nu\colon\cB(U)\to[0,\infty]$ of a general $U$-valued Lévy process $L$ satisfies $\nu(\{0\})=0$ and $\int_U\min(\|x\|_U^2,1)\,\nu(\dl x)<\infty$, cf.~\cite[Section 4]{PesZab2007}.
Due to our square integrability assumption on $L$ we additionally have
\begin{align}\label{eq:ass_nu}
|\nu|_2 := \Big(\int_{U}\|y\|_U^2\,\nu(\dl y)\Big)^{\frac12}<\infty,
\end{align}
see, e.g., \cite[Theorem~4,47]{PesZab2007}. As a further consequence of our assumptions on $L$, the characteristic function of $L(t)$ is of given by
\begin{align}\label{eq:L_char_fn}
\bE e^{i\langle x,L(t)\rangle_U}=\exp\Big(-t\int_U\big(1-e^{i\langle x,y\rangle_U}+i\langle x,y\rangle_U\big)\,\nu(\dl y)\Big),\quad x\in U,
\end{align}
cf.~\cite[Theorem 4.27]{PesZab2007}. Conversely, every $U$-valued Lévy process $L$ satisfying \eqref{eq:ass_nu} and \eqref{eq:L_char_fn} is square-integrable with mean zero and vanishing Gaussian part.

We always consider a fixed càdlàg (right continuous with left limits) modification of $L$. 
The jumps of $L$ determine a Poisson random measure on $\cB([0,T]\times U)$ as follows: For $(\omega,t)\in\Omega\times(0,T]$ we denote by $\Delta L(t)(\omega):=L(t)(\omega)-\lim_{s\nearrow t}L(s)(\omega)\in U$ the jump of a trajectory of $L$ at time $t$. Then
\begin{equation}\label{eq:LjumpPRM}
N(\omega):=\sum_{t\in(0,T]:\Delta L(t)(\omega)\neq 0}\delta_{(t,\Delta L(t)(\omega))},\quad\omega\in\Omega,
\end{equation}
defines a Poisson random measure $N$ on $\cB([0,T]\times U)$ with intensity measure $\lambda\otimes\nu$, where $\delta_{(t,x)}$ denotes Dirac measure at $(t,x)\in[0,T]\times U$ and $\nu$ is the Lévy measure of $L$. This follows, e.g., from Theorem~6.5 in \cite{PesZab2007} together with Theorems~4.9, 4.15, 4.23 and Lemma 4.25 therein. 
It the context of Poisson Malliavin calculus it is useful to consider $N$ as a random variable with values in the space $\NN=\NN([0,T]\times U)$ of all $\sigma$-finite $\bN_0\cup\{+\infty\}$-valued measures on $\cB([0,T]\times U)$. It is endowed with the  $\sigma$-algebra $\mathcal{N}=\mathcal N([0,T]\times U)$ generated by the mappings $\NN\ni\mu\mapsto\mu(B)\in\bN_0\cup\{+\infty\}$, $B\in\cB([0,T]\times U)$.

We now list some important notation used in the present context.

\begin{notation}\label{notation}
The following notation is used throughout the article.
\begin{itemize}[leftmargin=7mm]
\item 
$\nu\colon\cB(U)\to[0,\infty]$
and $(U_0,\|\cdot\|_{U_0},\langle\cdot,\cdot\rangle_{U_0})$ are the Lévy measure and the reproducing kernel Hilbert space of $L$, respectively; cf.~\cite[Definition 4.28 and 7.2]{PesZab2007}.
%The Lévy measure $L$ is denoted by $\nu\colon\cB(U)\to[0,\infty]$ and $U_0\subset U$ is the reproducing kernel Hilbert space of $L$.
\item 
$N\colon\Omega\to\NN$ is the Poisson random measure (Poisson point process) on $[0,T]\times U$ determined by the jumps of $L$ as specified in Eq.~\eqref{eq:LjumpPRM} above.
The compensated Poisson random measure is denoted by $\tilde N:=N-\lambda\otimes\nu$, i.e., $\tilde N(B)=N(B)-(\lambda\otimes\nu)(B)$ for all $B\in\cB([0,T]\times U)$ with $(\lambda\otimes\nu)(B)<\infty$
\item
$(\cF_t)_{t\in[0,T]}$ is the filtration given by 
$\cF_t:=\bigcap_{u\in(t,T]}\tilde\cF_u,$
%$\cF_t:=\bigcap_{u\in(t,T]}\overline{\sigma\big(L(s):s\in[s,u]\big)}^\P$, where $\overline{\vphantom{(}\dots}^\P$ denotes the $\P$-completion
where $\tilde\cF_u$ is the $\bP$-completion of $\sigma(L(s):s\in[0,u])$. 
%\todo{(kleine Unklarheit)}
\item
For $p\in\{0\}\cup[1,\infty]$ set $L^p(\Omega;H):=L^p(\Omega,\cF,\bP;H)$ and $L^p(\Omega\times[0,T]\times U;H):=$ $L^p(\Omega\times[0,T]\times U,\cF\otimes\cB([0,T]\times U),\bP\otimes\lambda\otimes\nu;H)$. Moreover,
$\cP_T\subset\cF\otimes\cB([0,T])$  denotes the $\sigma$-algebra of predictable sets w.r.t.\ to $(\cF_t)_{t\in[0,T]}$ and we further set 
$
L^2_{\pred}(\Omega\times[0,T]\times U; H):=L^2\big(\Omega\times[0,T]\times U,\cP_T\otimes\cB(U),\P\otimes\lambda\otimes\nu;H\big).
$
\end{itemize}
\end{notation}

%\todo{(standard measures on $\Omega$, $[0,T]$, $U$)}

We end this section by recalling some basics on stochastic integration w.r.t.\ $L$ and $\tilde N$, cf.~\cite{PesZab2007}.
The $H$-valued $L^2$ stochastic integral $\int_0^T\Phi(s)\,\dl L(s)$ w.r.t.\ $L$ is defined for all $\Phi\in L^2_{\pred}(\Omega\times[0,T];\cL_2(U_0,H)):=L^2(\Omega\times[0,T],\cP_T,\bP\otimes\lambda;\cL_2(U_0,H))$, and we have the It\^o isometry
$\bE\big\|\int_0^T\Phi(s)\,\dl L(s)\big\|^2=\int_0^T\bE\|\Phi(s)\|_{\cL_2(U_0,H)}^2\dl s$.
The $H$-valued $L^2$ stochastic integral $\int_0^T\int_U\Phi(s,x)\,\tilde N(\dl s,\dl x)$ w.r.t.\ $\tilde N$ is defined for all $\Phi\in L^2_{\pred}(\Omega\times[0,T]\times U;H)$, and here it holds that
$\bE\big\|\int_0^T\int_U\Phi(s,x)\,\tilde N(\dl s,\dl x)\big\|^2=\int_0^T\int_U\bE\|\Phi(s,x)\|^2\nu(\dl x)\dl s$.
As usual, we set $\int_0^t\Phi(s)\,\dl L(s):=\int_0^T\mathds{1}_{(0,t]}(s)\Phi(s)\,\dl L(s)$ and $\int_0^t\int_U\Phi(s,x)\,\tilde N(\dl s,\dl x):=\int_0^T\int_U\mathds{1}_{(0,t]}(s)\Phi(s,x)\,\tilde N(\dl s,\dl x)$, $t\in[0,T]$.
A useful property shown in \cite[Lemma 3.1]{KovLinSch2015} is the following: There exists an isometric embedding $\kappa\colon L^2_{\pred}(\Omega\times[0,T];\cL_2(U_0,H))\to L^2_{\pred}(\Omega\times[0,T]\times U;H)$ such that 
$
\int_0^T\Phi(s)\,\dl L(s)=\int_0^T\int_U \Phi(s)x\,\tilde N(\dl s,\dl x)
$
for $\Phi\in L^2_{\pred}(\Omega\times [0,T];\cL_2(U_0,H))$, where we set $\Phi(s)x:=\kappa(\Phi)(s,x)$ to simplify notation.

%\begin{align}\label{eq:Ito_isometry}
%  \E
%  \int_0^T
%    \Phi(t)
%  \diffout L(t)
%  =
%  \E
%  \int_0^T
%  \int_U
%    \|\Phi(t)y\|^2
%  \nu(\diffin y)
%  \diffout t
%  =
%  \E
%  \int_0^T
%    \|\Phi(t)\|_{\LB_2(U_0,H)}^2
%  \diffout t
%\end{align}

\subsection{Poisson-Malliavin calculus in Hilbert space}
\label{subsec:Malliavin_prel}

In this subsection we collect some concepts and results from Hilbert space-valued Poisson Malliavin calculus. We refer to \cite{AnderssonLindner2017a} and the references therein for a more detailed exposition.

While in the Gaussian case the Malliavin derivative is a differential operator, one possible analogue in the Poisson case is a finite difference operator $D\colon L^0(\Omega;H)\to L^0(\Omega\times[0,T]\times U;H)$ defined as follows. 
Recall that $\mathcal{F}$ is the $\bP$-completion of the $\sigma$-algebra generated by the Lévy process $L$, which coincides with the $\bP$-completion of the $\sigma$-algebra generated by the Poisson random measure $N$. This and the factorization theorem from measure theory imply that for every random variable $F\colon \Omega\to H$ there exists a $\cN$-$\cB(H)$-measurable function $f\colon\mathbf N\to H$, called a representative of $F$, such that $F=f(N)$ $\bP$-almost surely. 
In this situation we set $\varepsilon^+_{t,x}F:=f(N+\delta_{(t,x)})$, where $\delta_{(t,x)}$ denotes Dirac measure at $(t,x)\in[0,T]\times U$.
As a consequence of Mecke's formula, this definition is $\bP\otimes\dl t\otimes\nu(\dl x)$-almost everywhere independent of the choice of the representative $f$, so that $F\mapsto\big(\varepsilon^+_{t,x}F\big)$ is well-defined as a mapping from $L^0(\Omega;H)$ to $L^0(\Omega\times[0,T]\times U;H)$, cf.~\cite[Lemma~2.5]{AnderssonLindner2017a}. 
The difference operator  $D\colon L^0(\Omega;H)\to L^0(\Omega\times[0,T]\times U;H),\;F\mapsto DF=\big(D_{t,x}F\big)$ is then defined by
\begin{align}
D_{t,x}F:=\varepsilon^+_{t,x}F-F,\quad (t,x)\in[0,T]\times U.
\end{align}
The Malliavin-Sobolev space $\mathds D^{1,2}(H)$ consists of all $F\in L^2(\Omega;H)$ satisfying $DF\in L^2(\Omega\times[0,T]\times U; H)$. In Subsection~\ref{subsec:regX} we introduce refined Malliavin-Sobolev spaces $\bM^{1,p,q}(H)$, $p,q\in(1,\infty]$.

%By \cite[Theorem 3.7]{AnderssonLindner} it is a Hilbert space with scalar product
%\begin{align*}
%  \langle
%    F,G
%  \rangle_{\mathbb D^{1,2}(H)}
%  =
%  \langle F,G \rangle_{L^2(\Omega;H)}
%  +
%  \langle DF,DG\rangle_{L^2(\Omega_T\times U;H)}
%\end{align*}
%and induced norm. By \cite[Lemma 3.15]{AnderssonLindner} the duality relation with the $L^2$-stochastic integral \eqref{eq:duality_space_time} holds. 

The following basic lemmata are taken from \cite[Lemma 3.2 and Corollary 4.2]{AnderssonLindner2017a}.
\begin{lemma}\label{lem:chain}
Let $F\in L^0(\Omega;H)$ and $h$ be a measurable mapping from $H$ to another separable real Hilbert space $V$. Then it holds that  
$
%\label{eq:chainS}
  D h(F)
 =
  h(F+D F)-h(F).
$
\end{lemma}

\begin{lemma}\label{lem:DF_is_zero}
Let $t\in[0,T]$ and $F\colon\Omega\to H$ be $\cF_t$-$\cB(H)$-measurable.
Then the equality $D_{s,x}F=0$ holds $\bP\otimes\dl s\otimes\nu(\dl x)$-almost everywhere on $\Omega\times(t,T]\times U$. 
%Then $DF=0$ $\bP_T\otimes \nu$-almost everywhere on $\Omega\times(t,T]\times U$.
%%\fbox{to do}
%%Let $t\in(0,T]$ and $F\colon \Omega\to H$ be $\cF_t$-measurable. For 
%%$\P\otimes\diffout t\otimes \nu$ almost all $(\omega,s,x)\in\Omega\times(t,T]\times U$ it holds that $D_{s,x}F=0$.
\end{lemma}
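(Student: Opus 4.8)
The plan is to show that an $\cF_t$-measurable random variable admits a representative that ignores the part of $N$ living on $(t,T]\times U$, so that inserting an additional atom at a time $s>t$ cannot change its value.

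First I would record the relevant identification of $\sigma$-algebras. Since $L$ has vanishing Gaussian part, is mean-zero and square-integrable and of pure jump type, one has $L(s)=\int_0^s\int_U y\,\tilde N(\dl r,\dl y)$, so that for every $u\in[0,T]$ the $\bP$-completion of $\sigma(L(s):s\in[0,u])$ coincides with the $\bP$-completion of $\sigma(N|_{[0,u]\times U})$, the $\sigma$-algebra generated by the restriction of $N$ to $[0,u]\times U$. Consequently $\tilde\cF_u$ is the $\bP$-completion of $\sigma(N|_{[0,u]\times U})$, and the factorization theorem from measure theory shows that every $\tilde\cF_u$-measurable $H$-valued random variable $G$ can be written as $G=g(N|_{[0,u]\times U})$ $\bP$-a.s.\ for some $\cN([0,u]\times U)$-$\cB(H)$-measurable map $g$.

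Next, fix $u\in(t,T]$. By definition $\cF_t=\bigcap_{v\in(t,T]}\tilde\cF_v\subset\tilde\cF_u$, so the $\cF_t$-measurable variable $F$ is $\tilde\cF_u$-measurable, whence $F=f_u(N|_{[0,u]\times U})$ $\bP$-a.s.\ for a suitable $f_u$. Setting $f(\mu):=f_u(\mu|_{[0,u]\times U})$ for $\mu\in\NN$ then defines a representative of $F$ in the sense of Subsection~\ref{subsec:Malliavin_prel}, i.e.\ $F=f(N)$ $\bP$-a.s. For any $s>u$ and $x\in U$ the atom $\delta_{(s,x)}$ is supported outside $[0,u]\times U$, so that $(N+\delta_{(s,x)})|_{[0,u]\times U}=N|_{[0,u]\times U}$ pointwise on $\Omega$, and hence
\[
\varepsilon^+_{s,x}F=f(N+\delta_{(s,x)})=f_u\big((N+\delta_{(s,x)})|_{[0,u]\times U}\big)=f_u\big(N|_{[0,u]\times U}\big)=f(N),
\]
which means $\varepsilon^+_{s,x}F=F$ and therefore $D_{s,x}F=0$ everywhere on $\Omega$, for all $s\in(u,T]$ and all $x\in U$.

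Finally, since $\varepsilon^+_{s,x}F$ is independent of the chosen representative up to a $\bP\otimes\dl s\otimes\nu(\dl x)$-null set, as recalled in Subsection~\ref{subsec:Malliavin_prel} (a consequence of Mecke's formula, cf.~\cite[Lemma~2.5]{AnderssonLindner2017a}), the previous step yields $D_{s,x}F=0$ for $\bP\otimes\dl s\otimes\nu(\dl x)$-almost every $(\omega,s,x)$ with $s\in(u,T]$. I would then pick a sequence $u_n\downarrow t$ with $u_n\in(t,T]$, write $(t,T]=\bigcup_n(u_n,T]$, and use that a countable union of null sets is null to conclude that $D_{s,x}F=0$ holds $\bP\otimes\dl s\otimes\nu(\dl x)$-almost everywhere on $\Omega\times(t,T]\times U$. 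The only genuinely delicate point is the right-continuity built into the definition of $\cF_t$: it prevents a single application of the factorization at level $u=t$ and is precisely what forces the limiting argument over $u_n\downarrow t$.
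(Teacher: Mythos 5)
The paper does not prove this lemma itself but imports it from \cite[Lemma 3.2 and Corollary 4.2]{AnderssonLindner2017a}, so there is no in-paper proof to compare against; your argument is correct and is the standard route for such adaptedness statements. Constructing, for each $u\in(t,T]$, a representative of $F$ that factors through the restriction $N|_{[0,u]\times U}$, noting that an added atom at $(s,x)$ with $s>u$ leaves this restriction unchanged, invoking the representative-independence of $\varepsilon^+_{s,x}$ to pass from that particular representative to the equivalence class, and finally taking a countable union over $u_n\downarrow t$ to absorb the right-continuity built into the definition of $\cF_t$ is exactly what one expects the cited proof to do, and each step is properly justified.
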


The next result is a special case of the general duality formula in \cite[Proposition~4.9]{AnderssonLindner2017a}. It is crucial for our approach to weak error analysis for Lévy driven SPDE.
\begin{prop}[Duality formula]\label{prop:duality}
For all $F\in\bD^{1,2}(H)$ and $\Phi\in L^{2}_{\pred}(\Omega\times[0,T]\times U;H)$ we have 
\begin{align*}
\bE\,\Big\langle F,\int_0^T\int_U\Phi(t,x)\,\tilde N(\dl t,\dl x)\Big\rangle=\E\int_0^T\int_U\big\langle D_{t,x}F,\,\Phi(t,x)\big\rangle\,\nu(\dl x)\,\dl t.
\end{align*}
\end{prop}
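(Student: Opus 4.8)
The plan is to obtain the identity from Mecke's equation, the defining property of the Poisson random measure $N$ with intensity $\lambda\otimes\nu$: for every suitably integrable $\cB([0,T]\times U)\otimes\cN$-measurable map $g\colon([0,T]\times U)\times\NN\to\R$,
\begin{equation*}
\E\int_{[0,T]\times U} g\big((t,x),N\big)\,N(\dl t,\dl x)
=\E\int_{[0,T]\times U} g\big((t,x),N+\delta_{(t,x)}\big)\,\nu(\dl x)\,\dl t .
\end{equation*}
Recall that $\cF$ is generated by $N$, so $F=f(N)$ for a representative $f$ and $D_{t,x}F=\varepsilon^+_{t,x}F-F=f(N+\delta_{(t,x)})-f(N)$. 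First I would prove the formula on a convenient dense class of pairs $(F,\Phi)$ — for instance $F$ admitting a bounded representative and $\Phi$ a bounded predictable integrand supported on a set of finite $\lambda\otimes\nu$-measure — so that all integrals below converge absolutely and Mecke's equation applies term by term.

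On this class I would split $\tilde N=N-\lambda\otimes\nu$ and pull $F$ inside the integral (legitimate since $F$ does not depend on $(t,x)$), giving
\begin{equation*}
\E\Big\langle F,\int_0^T\!\!\int_U\Phi(t,x)\,\tilde N(\dl t,\dl x)\Big\rangle
=\E\int_0^T\!\!\int_U\langle F,\Phi(t,x)\rangle\,N(\dl t,\dl x)
-\E\int_0^T\!\!\int_U\langle F,\Phi(t,x)\rangle\,\nu(\dl x)\,\dl t .
\end{equation*}
Applying Mecke's equation to the first term with $g((t,x),N)=\langle f(N),\Phi(t,x)\rangle$ replaces both $F$ and $\Phi(t,x)$ by their shifts $\varepsilon^+_{t,x}F$ and $\varepsilon^+_{t,x}\Phi(t,x)$.

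The crucial point is then to invoke the predictability of $\Phi$: since $\Phi(t,x)$ is $\cP_T\otimes\cB(U)$-measurable, its value at time $t$ depends only on the restriction of $N$ to $[0,t)\times U$, and $(N+\delta_{(t,x)})$ agrees with $N$ there, so $\varepsilon^+_{t,x}\Phi(t,x)=\Phi(t,x)$ for $\lambda\otimes\nu$-almost every $(t,x)$. This is the step I expect to be the main obstacle, as it requires making precise, at the level of representatives as functions of $N$, that a predictable integrand does not ``see'' an atom inserted at the present time $t$ — the same insensitivity principle underlying Lemma~\ref{lem:DF_is_zero}, now exploited in the time variable. With this substitution the first term becomes $\E\int_0^T\!\int_U\langle\varepsilon^+_{t,x}F,\Phi(t,x)\rangle\,\nu(\dl x)\,\dl t$, and subtracting the compensator term leaves $\E\int_0^T\!\int_U\langle\varepsilon^+_{t,x}F-F,\Phi(t,x)\rangle\,\nu(\dl x)\,\dl t=\E\int_0^T\!\int_U\langle D_{t,x}F,\Phi(t,x)\rangle\,\nu(\dl x)\,\dl t$, which is the asserted identity on the dense class.

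Finally I would remove the restrictions by a density argument. Both sides are continuous bilinear forms on $\bD^{1,2}(H)\times L^2_{\pred}(\Omega\times[0,T]\times U;H)$: the left-hand side is bounded by $\|F\|_{L^2(\Omega;H)}\,\|\Phi\|_{L^2_{\pred}}$ via the Cauchy--Schwarz inequality and the It\^o isometry for $\tilde N$, while the right-hand side is bounded by $\|DF\|_{L^2(\Omega\times[0,T]\times U;H)}\,\|\Phi\|_{L^2_{\pred}}$ by Cauchy--Schwarz, the first factor being finite precisely because $F\in\bD^{1,2}(H)$. Approximating $F$ in the $\bD^{1,2}$-sense (so that $F$ and $DF$ converge in $L^2$) and $\Phi$ in $L^2_{\pred}$ by elements of the dense class then extends the identity to all admissible $(F,\Phi)$ by passing to the limit on both sides.
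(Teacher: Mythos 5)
Your argument is correct in outline, but note that the paper does not actually prove this proposition: it is imported as a special case of the general duality formula in \cite[Proposition~4.9]{AnderssonLindner2017a}, so there is no in-paper proof to compare against. Your Mecke-equation derivation is precisely the standard route by which such duality (integration-by-parts) formulas for Poisson functionals are established, and the two points you flag as delicate are indeed where the real content lies. First, the identity $\varepsilon^+_{t,x}\Phi(t,x)=\Phi(t,x)$ should be proved for elementary predictable integrands of the form $\mathds{1}_{F_s\times(s,u]\times B}$ with $F_s\in\cF_s$ --- where it reduces to the observation behind Lemma~\ref{lem:DF_is_zero}, namely that an $\cF_s$-measurable variable admits a representative depending only on the restriction of $N$ to $[0,s]\times U$, which the added atom at $(t,x)$ with $t>s$ does not affect --- and then extended to all of $L^2_{\pred}$ by a monotone class argument. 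Second, for the density step you should justify that random variables with bounded representatives are dense in $\bD^{1,2}(H)$ with respect to the graph norm; this follows by composing $F$ with the $1$-Lipschitz radial retraction of $H$ onto the ball of radius $n$, applying Lemma~\ref{lem:chain} to see that the resulting derivative is dominated by $\|D_{t,x}F\|$, and concluding by dominated convergence. With these two points made precise, your proof is complete and self-contained, which is arguably a gain over the paper's bare citation.
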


Before we proceed with two further important results, we need to discuss the application of $D$ on stochastic processes.
\begin{remark}[Difference operator for stochastic processes]
\label{rem:DXprocess}
One can define in a analogous way as above for stochastic processes a further difference operator $D$ mapping $X\in L^0(\Omega\times[0,T];H)$ to $DX=\big(D_{s,x}X(t)\big)_{t\in[0,T],(s,x)\in[0,T]\times U}\in L^0\big(\Omega\times[0,T]\times [0,T]\times U;H\big)$, see \cite[Remark~3.10]{AnderssonLindner2017a}.
Then it holds for $\lambda$-almost all $t\in[0,T]$ that
\begin{align}\label{eq:remDXprocess}
D_{s,x}X(t)=D_{s,x}(X(t))\quad\bP\otimes\dl s\otimes\nu(\dl x)\text{-a.e.},
\end{align} 
where $D(X(t))=\big(D_{s,x}(X(t))\big)_{(s,x)\in[0,T]\times U}\in L^0(\Omega\times[0,T]\times U;H)$ is for fixed $t$ the Malliavin derivative of the random variable $F=X(t)$ as introduced above. 
We will, however, typically encounter the situation where $X=(X(t))_{t\in[0,T]}$ is not given as an equivalence class of stochastic processes but as a single stochastic process with $X(t)$ being specifically defined for \emph{every} $t\in[0,T]$. If $X$ is not only $\cF\otimes\cB([0,T])$-measurable but also stochastically continuous or piecewise stochastically continuous, then there exists a $\bP\otimes\dl t\otimes\dl s\otimes\nu(\dl x)$-version of $DX=\big(D_{s,x}X(t)\big)_{t\in[0,T],(s,x)\in[0,T]\times U}$ such that \eqref{eq:remDXprocess}  holds for \emph{every} $t\in[0,T]$, cf.~\cite[Lemma~4.3]{AnderssonLindner2017a}. We also use a further analogously defined difference operator $D$ mapping $\Phi\in L^0(\Omega\times[0,T]\times U;H)$ to $D\Phi=\big(D_{s,x}\Phi(t,y)\big)_{(t,y),(s,x)\in[0,T]\times U}\in L^0(\Omega\times([0,T]\times U)^2;H)$ in such a way that for $\lambda\otimes\nu$-almost all $(t,y)\in[0,T]\times U$ we have $D_{s,x}\Phi(t,y)=D_{s,x}(\Phi(t,y))$ $\bP\otimes\dl s\otimes\nu(\dl x)$-a.e., cf.~\cite[Remark~3.10]{AnderssonLindner2017a}.
\end{remark}

In the regularity analysis of SPDEs it is important to know how $D$ acts on Lebesgue integrals and stochastic integrals. For this purpose we recall the following results. The first one is taken from \cite[Proposition 4.5]{AnderssonLindner2017a}, the second is a special case of \cite[Proposition 4.13]{AnderssonLindner2017a} combined with \cite[Lemma 4.11]{AnderssonLindner2017a}.

\begin{prop}[Malliavin derivative of time integrals]\label{lem:intDX}
Let $X\colon \Omega\times[0,T]\to H$ be a stochastic process which is $\cF\otimes\cB([0,T])$-measurable and piecewise stochastically continuous, let $\mu$ be a $\sigma$-finite Borel-measure on $[0,T]$, and assume that $X$ belongs to $L^1([0,T],\mu;L^p(\Omega;H))$ for some $p>1$. 
Consider a fixed 
%$\cF\otimes\cB([0,T])\otimes\cB([0,T]\times U)$-$\cB(H)$-measurable 
version of
$DX=(D_{s,x}X(t))_{t\in[0,T],(s,x)\in [0,T]\times U}$ such that
for all $t\in[0,T]$ the identity $D_{s,x}X(t)=D_{s,x}(X(t))$ holds $\bP\otimes\dl s\otimes\nu(\dl x)$-almost everywhere, cf.~Remark~\ref{rem:DXprocess}.
Then, 
for all $B\in \cB(U)$ with $\nu(B)<\infty$
we have
$$\bE\Big[\int_{[0,T]}\int_B\int_{[0,T]}\|D_{s,x}X(t)\|\,\mu(\dl t)\,\nu(\dl x)\,\dl s\Big]<\infty,$$
so that the integral $\int_{[0,T]} D_{s,x}X(t)\,\mu(\dl t)$ is defined $\bP\otimes\dl s\otimes\nu(\dl x)$-almost everywhere on $\Omega\times[0,T]\times U$ as an $H$-valued Bochner integral. Moreover,
the equality
\[D_{s,x}\int_{[0,T]} X(t)\,\mu(\dl t)=\int_{[0,T]} D_{s,x}X(t)\,\mu(\dl t)\]
holds $\bP\otimes\dl s\otimes\nu(\dl x)$-almost everywhere on $\Omega\times[0,T]\times U$.  
%In particular, the integral $\int_0^T D_{s,x}X(t)\,\eta(\dl t)$ is defined $\bP\otimes(\lambda\otimes\nu)$-almost everywhere as an $H$-valued Bochner integral.
%In particular, the integral $\int_0^T D_{s,x}X(t)\,\eta(\dl t)$ is defined $\bP\otimes(\lambda\otimes\nu)$-almost everywhere as an $H$-valued Bochner integral.
\end{prop}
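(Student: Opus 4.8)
The plan is to prove the commutation formula
$D_{s,x}\int_{[0,T]} X(t)\,\mu(\dl t)=\int_{[0,T]} D_{s,x}X(t)\,\mu(\dl t)$
by combining the pointwise-in-$t$ identity $D_{s,x}X(t)=D_{s,x}(X(t))$ from Remark~\ref{rem:DXprocess} with the fact that the difference operator $D=\varepsilon^+-\id$ acts as a composition with a fixed shifted Poisson configuration, hence commutes with Bochner integration on a set of full measure. First I would establish the integrability claim, since everything else is conditional on the Bochner integral $\int_{[0,T]}D_{s,x}X(t)\,\mu(\dl t)$ being well-defined. Using the representative $f\colon\NN\to H$ of the random variable $\int_{[0,T]}X(t)\,\mu(\dl t)$ together with the measurability of a jointly measurable version of $(s,x,t)\mapsto D_{s,x}X(t)$, I would apply Tonelli's theorem to the nonnegative integrand $\|D_{s,x}X(t)\|$ and reduce the finiteness of $\bE\int_{[0,T]}\int_B\int_{[0,T]}\|D_{s,x}X(t)\|\,\mu(\dl t)\,\nu(\dl x)\,\dl s$ to an $L^p$-type bound. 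The hypothesis $X\in L^1([0,T],\mu;L^p(\Omega;H))$ for some $p>1$ controls $\bE\|X(t)\|^p$ after integrating against $\mu$, and the Mecke-based definition of $\varepsilon^+_{s,x}$ lets one bound $\bE\int_B\int_{[0,T]}\|\varepsilon^+_{s,x}X(t)\|\,\dl s\,\nu(\dl x)$ in terms of $\bE\|X(t)\|$ over $\nu(B)<\infty$; this is precisely where finiteness of $\nu(B)$ and $p>1$ (rather than $p=1$) enter, guaranteeing both the Mecke estimate and the applicability of Fubini.

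Once integrability is settled, the core identity should follow by unwinding the definition of $D$. The idea is to work with a representative: if $f$ is a representative of the random variable $Y:=\int_{[0,T]}X(t)\,\mu(\dl t)$ and, for $\lambda\otimes\mu$-a.e.\ $t$, $g_t$ is a representative of $X(t)$, then on a set of full $\bP$-measure one has $f(N)=\int_{[0,T]}g_t(N)\,\mu(\dl t)$. Applying the shift $N\mapsto N+\delta_{(s,x)}$ inside the integral, and using that the map $\mu\mapsto f(\mu)$ realizing the Bochner integral is unaffected by which representative one picks (Mecke's formula again), I would obtain
$\varepsilon^+_{s,x}\int_{[0,T]}X(t)\,\mu(\dl t)=\int_{[0,T]}\varepsilon^+_{s,x}X(t)\,\mu(\dl t)$
on a set of full $\bP\otimes\dl s\otimes\nu(\dl x)$-measure. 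Subtracting $\int_{[0,T]}X(t)\,\mu(\dl t)=\int_{[0,T]}X(t)\,\mu(\dl t)$ then yields the claim, since $D=\varepsilon^+-\id$ and the subtraction passes through the integral by linearity of the Bochner integral.

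The main obstacle is making the interchange $\varepsilon^+_{s,x}\int_{[0,T]}X(t)\,\mu(\dl t)=\int_{[0,T]}\varepsilon^+_{s,x}X(t)\,\mu(\dl t)$ rigorous for a \emph{fixed} jointly measurable version of $DX$, rather than merely for each $t$ separately. The pointwise-in-$t$ identity $D_{s,x}X(t)=D_{s,x}(X(t))$ holds $\bP\otimes\dl s\otimes\nu(\dl x)$-a.e.\ for each $t$, but to integrate against $\mu$ one needs a null set valid simultaneously along the $t$-integration, which is where the piecewise stochastic continuity of $X$ and the corresponding version statement in Remark~\ref{rem:DXprocess} (via \cite[Lemma~4.3]{AnderssonLindner2017a}) are essential. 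I would handle this by first proving the identity for simple integrands of the form $\sum_i \mathds{1}_{A_i}(t)\xi_i$ approximating $X$ in $L^1([0,T],\mu;L^p(\Omega;H))$, where the commutation is immediate from linearity of $\varepsilon^+$, and then passing to the limit using the $L^1(\bP\otimes\dl s\otimes\nu(\dl x))$-continuity of $D$ afforded by the integrability bound established in the first step. The approximation argument converts the delicate measure-theoretic interchange into a standard density argument, with the uniform integrability bound ensuring that both sides converge to the correct limits.
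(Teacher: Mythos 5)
The paper does not prove this proposition at all: it is imported verbatim from \cite[Proposition~4.5]{AnderssonLindner2017a} (as announced in the sentence preceding the statement), so there is no in-paper proof to compare against. Judged on its own, your sketch is a sound reconstruction and contains the two ingredients any proof must have: (i) the integrability bound via the Mecke formula, and (ii) the commutation of $\varepsilon^+_{s,x}$ with the Bochner integral via representatives, upgraded to the fixed jointly measurable version by a density argument. One point you should make explicit in (i): the Mecke formula does not bound $\bE\int_0^T\int_B\|\varepsilon^+_{s,x}X(t)\|\,\nu(\dl x)\,\dl s$ by a multiple of $\bE\|X(t)\|$; it gives exactly $\bE\big[\|X(t)\|\,N([0,T]\times B)\big]$, and since $N([0,T]\times B)$ is unbounded you must apply H\"older with exponents $p,p'$ and use that the Poisson variable $N([0,T]\times B)$ (finite mean $T\nu(B)$) has finite $p'$-th moment; this is the precise reason the hypothesis $p>1$ cannot be weakened to $p=1$. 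Your density argument in the last paragraph does close the measurability gap correctly, because the map $F\mapsto \varepsilon^+ F$ is linear and, by the same Mecke--H\"older estimate, continuous from $L^p(\Omega;H)$ into $L^1(\Omega\times[0,T]\times B;H)$ for each $B$ with $\nu(B)<\infty$, so both sides of the identity pass to the limit along simple approximations of $X$ in $L^1([0,T],\mu;L^p(\Omega;H))$; exhausting $U$ by such sets $B$ then uses the $\sigma$-finiteness of $\nu$. With these two points spelled out I see no remaining gap.
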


\begin{prop}[Malliavin derivative of stochastic integrals]
\label{prop:comm_space-time2}
Let $\Phi\in L^{2}_{\pred}(\Omega\times[0,T]\times U;H)$. Then the derivative 
$D\Phi\in L^0\big(\Omega\times([0,T]\times U)^2;H\big)$ has a $\cP_T\otimes\cB(U)\otimes\cB([0,T]\times U)$-measurable version,
i.e., the mapping
\begin{equation*}\label{eq:space_time2}
D\Phi\colon\Omega\times([0,T]\times U)^2\to H,\;(\omega,t,y,s,x)\mapsto D_{s,x}\Phi(\omega,t,y)
%\Omega_T\times U\times([0,T]\times U)\ni(\omega,s,y,t,x)\mapsto D_{t,x}\Phi(\omega,s,y)\in H
\end{equation*} 
has a $\P\otimes(\lambda\otimes\nu)^{\otimes2}$-version which is $\cP_T\otimes\cB(U)\otimes\cB([0,T]\times U)$-measurable.
If moreover $\E\int_0^T\int_U\|D_{s,x}\Phi(t,y)\|^2\,\nu(\dl y)\,\dl t<\infty$ for $\lambda\otimes\nu$-almost all $(s,x)\in[0,T]\times U$, then
the equality
\begin{align}\label{eq:comm_rel_space-time}
D_{s,x}\int_0^T\int_U\Phi(t,y)\,\tilde N(\dl t,\dl y)=\int_0^T\int_UD_{s,x}\Phi(t,y)\,\tilde N(\dl t,\dl y)+\Phi(s,x)
\end{align}
holds 
$\bP\otimes\dl s\otimes\nu(\dl x)$-almost everywhere
%$\bP\otimes\dl s\otimes\nu(\dl x)$-almost everywhere 
on $\Omega\times[0,T]\times U$.  
\end{prop}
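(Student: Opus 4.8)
The plan is to establish \eqref{eq:comm_rel_space-time} first for elementary predictable integrands, where it reduces to a direct finite-difference computation, and then to extend it by approximation; the joint measurability statement is obtained along the way, since it is explicit for elementary integrands and stable under the limit. First I would take $\Phi$ of the elementary form $\Phi(t,y)=\sum_{i=1}^n\mathds{1}_{(a_i,b_i]}(t)\,\mathds{1}_{B_i}(y)\,F_i$, with $0\le a_i<b_i\le T$, sets $B_i\in\cB(U)$ of finite $\nu$-measure, and coefficients $F_i\in L^2(\Omega;H)$ that are $\cF_{a_i}$-$\cB(H)$-measurable; such processes are dense in $L^2_{\pred}(\Omega\times[0,T]\times U;H)$. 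Writing $M_i:=\int_0^T\int_U\mathds{1}_{(a_i,b_i]}(t)\mathds{1}_{B_i}(y)\,\tilde N(\dl t,\dl y)$, I would record the elementary identity $D_{s,x}M_i=\mathds{1}_{(a_i,b_i]}(s)\,\mathds{1}_{B_i}(x)$, which follows directly from the definition $D_{s,x}G=\varepsilon^+_{s,x}G-G$ together with the fact that adding the atom $\delta_{(s,x)}$ leaves the deterministic compensator $\lambda\otimes\nu$ untouched. Applying Lemma~\ref{lem:chain} to the pair $(F_i,M_i)$ and the multiplication map then gives
\[
D_{s,x}(F_iM_i)=F_i\,D_{s,x}M_i+M_i\,D_{s,x}F_i+D_{s,x}F_i\,D_{s,x}M_i .
\]
Summed over $i$, the first term equals the diagonal contribution $\Phi(s,x)$, the second equals $\int_0^T\int_U D_{s,x}\Phi(t,y)\,\tilde N(\dl t,\dl y)$, and the third vanishes, because it is supported on $s\in(a_i,b_i]$, hence on $s>a_i$, where Lemma~\ref{lem:DF_is_zero} forces $D_{s,x}F_i=0$. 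This is exactly the point at which predictability of the integrand enters and produces the clean extra term $+\Phi(s,x)$ rather than $+\varepsilon^+_{s,x}\Phi(s,x)$, establishing \eqref{eq:comm_rel_space-time} for elementary $\Phi$.

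For such $\Phi$ the asserted measurable version is explicit: the same vanishing makes $(\omega,t)\mapsto D_{s,x}\Phi(t,y)$ predictable for each fixed $(s,x,y)$, since on $\{s\le a_i\}$ the coefficient $D_{s,x}F_i$ is $\cF_{a_i}$-measurable while on $\{s>a_i\}$ it is zero, which yields the $\cP_T\otimes\cB(U)\otimes\cB([0,T]\times U)$-measurability. To remove the restriction to elementary integrands I would choose elementary $\Phi_n\to\Phi$ in $L^2_{\pred}(\Omega\times[0,T]\times U;H)$. The Itô isometry for $\tilde N$ gives $\int_0^T\int_U\Phi_n\,\tilde N(\dl t,\dl y)\to\int_0^T\int_U\Phi\,\tilde N(\dl t,\dl y)$ in $L^2(\Omega;H)$ and $\Phi_n(s,x)\to\Phi(s,x)$ in $L^2(\Omega\times[0,T]\times U;H)$; applying the isometry for each fixed $(s,x)$ and integrating in $(s,x)$ against $\lambda\otimes\nu$ turns convergence of the double derivative $D\Phi_n$ in $L^2\big(\Omega\times([0,T]\times U)^2;H\big)$ into convergence of $\int_0^T\int_U D_{s,x}\Phi_n\,\tilde N(\dl t,\dl y)$ in $L^2(\Omega\times[0,T]\times U;H)$. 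Since the finite-difference operator $D$ is closable and closed on $\bD^{1,2}(H)$ — a closability that itself follows from the duality formula of Proposition~\ref{prop:duality} — the two convergences identify the limit as $D\big[\int_0^T\int_U\Phi\,\tilde N(\dl t,\dl y)\big]$, upgrade the integral to an element of $\bD^{1,2}(H)$, and transfer \eqref{eq:comm_rel_space-time} to the limit; passing to an a.e.-convergent subsequence carries the measurable version over from the elementary case.

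The main obstacle I anticipate is the limiting argument, and specifically reconciling it with the precise hypothesis, which only requires $\E\int_0^T\int_U\|D_{s,x}\Phi(t,y)\|^2\,\nu(\dl y)\,\dl t<\infty$ for $\lambda\otimes\nu$-almost every fixed $(s,x)$, rather than the $(s,x)$-integrated second-order bound that the naive approximation in the previous paragraph relies upon. To handle this I would first prove the identity under the stronger, $(s,x)$-integrated, double integrability assumption and then recover the stated pointwise hypothesis by a truncation and localization in the $(s,x)$-variable, arguing for $\lambda\otimes\nu$-almost every $(s,x)$ separately and using that $(t,y)\mapsto D_{s,x}\Phi(t,y)$ is predictable and square integrable there. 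Keeping a genuine $\cP_T\otimes\cB(U)\otimes\cB([0,T]\times U)$-measurable representative — rather than a mere equivalence class — throughout this localization, in the spirit of Remark~\ref{rem:DXprocess}, is the second delicate point, but it is precisely what makes the inner stochastic integral $\int_0^T\int_U D_{s,x}\Phi(t,y)\,\tilde N(\dl t,\dl y)$ well defined for almost every $(s,x)$.
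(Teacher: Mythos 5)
The paper itself offers no proof of Proposition~\ref{prop:comm_space-time2}: it is imported directly from \cite[Proposition 4.13 and Lemma 4.11]{AnderssonLindner2017a}, so there is no in-paper argument to compare against line by line. Your sketch follows what is essentially the standard route (elementary predictable integrands plus a density argument), and its core is sound: the pathwise product rule $D(FG)=F\,DG+G\,DF+DF\cdot DG$ for the finite-difference operator, the identity $D_{s,x}M_i=\mathds{1}_{(a_i,b_i]}(s)\mathds{1}_{B_i}(x)$, and the observation that the cross term vanishes on $\{s>a_i\}$ by Lemma~\ref{lem:DF_is_zero} --- which is exactly where predictability produces the clean term $+\Phi(s,x)$ rather than $+\varepsilon^+_{s,x}\Phi(s,x)$ --- are all correct and capture the essential mechanism, as does the explicit measurable version in the elementary case.

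The genuine gap is in the passage to the limit. From $\Phi_n\to\Phi$ in $L^2_{\pred}(\Omega\times[0,T]\times U;H)$ you cannot conclude that $D\Phi_n\to D\Phi$ in $L^2\big(\Omega\times([0,T]\times U)^2;H\big)$: the finite-difference operator is not bounded on $L^2$, and your argument presupposes this convergence before invoking closedness of $D$ to identify the limit. Closedness only helps once you already know that $(D\Phi_n)_n$ converges to something; producing that convergence is the actual work, and it is not supplied. The standard repairs are either to choose the approximating sequence so that $D$ provably behaves well along it (e.g.\ truncations of the chaos expansion, or conditional expectations with respect to $\sigma$-fields generated by restrictions of $N$), or to use the Mecke formula to show that $F\mapsto\varepsilon^+F$ is continuous from $L^1(\Omega;H)$ into $L^1(\Omega\times B;H)$ for sets $B$ of finite intensity measure, deduce convergence of $D\Phi_n$ locally in measure, and combine this with a uniform second-moment bound. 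A second, related problem is the proposed localization ``for $\lambda\otimes\nu$-almost every $(s,x)$ separately'': $D_{s,x}G$ is only defined up to $\P\otimes\dl s\otimes\nu(\dl x)$-null sets, so one cannot fix a single $(s,x)$ and argue pointwise; this must be routed through a fixed jointly measurable representative and Fubini, which is precisely the content of the measurability half of the proposition and cannot be deferred to the end. You flag both issues as ``delicate points'', but flagging them is not resolving them: as written, the proposal is a correct reduction to the elementary case plus an unproven limit theorem.
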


\section{Malliavin regularity for a class of semilinear SPDE}
\label{sec:reg}

%In this section, we analyse the Malliavin regularity of the solutions to semilinear stochastic evolution equations of the form \eqref{eq:SPDE_additive}.

\subsection{Assumptions on the considered equation}
\label{subsec:assumptionSPDE}

We next state the precise assumptions on the operator $A$, the driving noise $L$, the nonlinearity $F$, and the initial value $X_0$ in Eq.~\eqref{eq:SPDE_additive}.

\begin{assumption}\label{as:SPDE} 
In addition to Assumption~\ref{setting}, suppose that the following holds:
%Consider Setting~\ref{setting} and additionally assume the following:
\begin{itemize}[leftmargin=8mm]
\item[(i)]
\label{as:A}
The operator $A\colon D(A)\subset H\to H$ is densely defined, linear, self-adjoint, positive definite and has a compact inverse. 
In particular, $-A$ is the generator of an analytic semigroup of contractions, which we denote by $(S(t))_{t\geq0}\subset \cL(H)$.
The spaces $\dot H^\rho$, $\rho\in\R$, are defined for $\rho\geq0$ as $\dot H^\rho:= D(A^{\frac{\rho}2})$ with norm 
$\|\cdot\|_{\dot H^\rho}:=\|A^{\frac{\rho}2}\cdot\|$ and for $\rho<0$ as the closure of $H$ w.r.t.\ the analogously defined
$\|\cdot\|_{\dot H^\rho}$-norm.  
\item[(ii)]
\label{as:L2_WC}
For some $\beta\in(0,1]$, the state space $U$ of the Lévy process $L=(L(t))_{t\in[0,T]}$ in Assumption~\ref{setting} is given by $U=\dot H^{\beta-1}$. 
%That is, $L$ is a $\dot H^{\beta-1}$-valued, square integrable, mean-zero Lévy process and the Gaussian part of $L$ is identically zero.
\item[(iii)]
\label{as:F_WC}
For some $\delta\in[1-\beta,2)$,
%, where $\beta\in(0,1]$ is as in (ii). 
the drift function $F\colon H\to \dot H^{\beta-1}$ belongs to the class $\Lip^0(H,\dot H^{\beta-1})\cap\Lip^1(H,\dot H^{-\delta})$.
\item[(iv)]
\label{as:X0_WC}
The initial value $X_0$ is an element of the space $\dot H^{2\beta}$.
\end{itemize}
\end{assumption}

%\begin{assumption}[Linear operator $A$]\label{as:A}
%The linear operator $A\colon H\supset \D(A)\to H$ satisfies that $-A$ is the generator of an analytic semigroup $(S(t))_{t\geq0}$ of bounded linear operators $H\to H$ and moreover that the inverse $A^{-1}\in\LB(H)$ exists.
%\end{assumption}

It is well known that, under Assumption~\ref{as:A}(i), there exist constants $C_\rho\in[0,\infty)$ (independent of $t$) such that
%The constants $(C_r)_{r\geq0}\subset[0,\infty)$ satisfy 
\begin{align}
  \big\|A^\frac{\rho}2S(t)\big\|_{\LB(H)}
&\leq
  C_\rho\, t^{-\frac{\rho}2},
  \quad
  t>0, \ \rho\geq0,\label{eq:smoothing}\\
  \big\|A^{-\frac{\rho}2}(S(t)-\id_H)\big\|_{\LB(H)}
&\leq
  C_\rho \, t^{\frac{\rho}2},
  \quad \;\;
  t\geq0, \ \rho\in(0,2],\label{eq:continuity}
\end{align}
see, e.g.,~\cite[Section~2.6]{pazy1983}. 
Concerning Assumption~\ref{as:SPDE}(iii), let us remark that Lipschitz continuity of the derivative $F'$ of $F$ is needed for the weak convergence analysis in Section~\ref{sec:weak}. 
Assuming
$F\in \Lip^1(H,H)$ is sufficient for the analysis, compare, e.g., \cite{AnderssonKruseLarsson}. In applications to SPDE this assumption is not satisfactory as the most important type of nonlinear drift, the Nemytskii type drift, typically does not satisfy the assumption. %\todo{check again} 
By assuming that $F'$ is Lipschitz continuous only as a mapping into the larger space $\dot H^{-\delta}$, for suitable $\delta$, the Sobolev embedding theorem can be used to prove that Nemytskii type nonlinearities are in fact included in $d\in\{1,2,3\}$ space dimensions. %cf.~Example~\ref{ex:A} below.
More precisely this holds for $\delta>\frac{d}2$, compare \cite[Example 3.2]{Wang2016}. 
%In \cite{AnderssonKovacsLarsson} this assumption was used in the same proof technique as is used here.

\begin{example}\label{ex:A}
For $d\in\{1,2,3\}$ let $\cO\subset \bR^d$ be an open, bounded, convex, poly-gonal/polyhedral domain and set $H:=L^2(\cO)$. 
Our standard example for $A$ is a second order elliptic partial differential operator with zero Dirichlet boundary condition of the form
$
Au:=-\nabla\cdot(a\nabla u)+cu,\; u\in D(A):=H^1_0(\cO)\cap H^2(\cO),
$
with bounded and sufficiently smooth coefficients $a,c\colon\cO\to\bR$ such that $a(\xi)\geq\theta>0$ and $c(\xi)\geq 0$ for all $\xi\in\cO$. Here $H^1_0(\cO)$ and $H^2(\cO)$ are the classical $L^2$-Sobolev spaces of order one with zero Dirichlet boundary condition and of order two, respectively. 
%It is well known that in this situation Assumption~\ref{as:A} is fulfilled and the abstract spaces $\dot H^{\rho}$ are related to the classical $L^2$-Sobolev spaces via, e.g, $\dot H^1=H^1_0(\cO)$ and $\dot H^2=H^1_0(\cO)\cap H^2(\cO)$.
As an example for the drift function $F$ we consider the Nemytskii type nonlinearity given by $(F(x))(\xi)=f(x(\xi))$, $x\in L^2(\cO)$, $\xi\in \cO$, where $f\colon\bR\to\bR$ is twice continuously differentiable with bounded first and second derivative. In this situation, Assumption~\ref{as:SPDE}(iii) is fullfilled for $\delta>\frac d2$, compare \cite[Example 3.2]{Wang2016}. 
Concrete examples for the Lévy process $L$ can be found in \cite[Subsection~2.1]{KovLinSch2015}.
\end{example}

%Assumption~\ref{as:A} implies the existence of the fractional powers $(A^r)_{r\in\R}$ of $A$. By $(H^r)_{r\in\R}$ we denote the spaces of fractional powers of $A^{\frac12}$. More precisely, for $r\geq0$ the space $H_r$ is the domain of $A^{\frac{r}2}$ with norm $\|x\|_{\dot H^r} = \|A^{\frac{r}2}x\|$ and for $r<0$ it is the closure of $H$ with respect to the $\|\cdot\|_{\dot H^r}$-norm. By the analyticity of the semigroup $S$, see \cite[Section 2.3]{pazy1983}, there exists constants $(C_r)_{r\geq0}$ such that 
%\begin{align}
%\label{eq:smoothing}
%  \big\|
%    S(t)A^{\frac{r}2}
%  \big\|_{\LB(H)}
%&\leq
%  C_r t^{-\frac{r}2},
%  \quad
%  t>0,
%  \ r\geq0,\\
%\label{eq:continuity}
%  \big\|
%    (S(t)-\id_H)A^{-\frac{r}2}
%  \big\|_{\LB(H)}
%&\leq
%  C_r
%  t^{\frac{r}2}
%  \quad
%  t>0,
%  \ r\in [0,2].
%\end{align}

%\begin{assumption}[L\'evy process L]\label{as:L2}
%The process $L=(L(t))_{t\in[0,T]}$ is  a $U$-valued square integrable Lévy process. 
%\end{assumption}
%The Levy measure of $L$ is denoted by $\nu$ and the related Poisson random measure by $N$. The reproducing kernel Hilbert space of $L$ is denoted $(U_0,\|\cdot\|_{U_0}, \langle\cdot,\cdot\rangle_{U_0})$.

%\begin{assumption}[Drift]\label{as:F1}
%The nonlinearity $F$ belongs to $\cG_\mathrm{b}^1(H,H)$.
%\end{assumption}

%\begin{assumption}[Multiplicative noise]\label{as:G1}
%For some fixed $\beta\in(0,1]$ the nonlinearity $G$ belongs to 
%$\cG_\mathrm{b}^1(H,\cL_2(U_0,\dot H^{\beta-1}))$.
%\end{assumption}

%\begin{assumption}[Initial value]\label{as:X0}
%$X_0\in H$.
%\end{assumption}

By a mild solution to Eq.~\eqref{eq:SPDE_additive} we mean an $(\mathcal F_t)_{t\in[0,T]}$-predictable stochastic process 
$X\colon\Omega\times[0,T] \to H$ such that 
\begin{align}\label{eq:XL2}
\sup_{t\in[0,T]}\|X(t)\|_{L^2(\Omega;H)} < \infty,
\end{align} 
and such that for all $t\in[0,T]$ it holds $\P$-almost surely that 
\begin{align}\label{eq:SPDE_mild}
  X(t)
  =
  S(t)X_0
  +
  \int_0^t
    S(t-s)
    F(X(s))
  \diffout s
  +
  \int_0^t
    S(t-s)
  \diffout L(s).
\end{align}
Under Assumption~\ref{as:SPDE} there exists a unique (up to modification) mild solution $X$ to Eq.~\eqref{eq:SPDE_additive}. This follows, e.g., from a straightforward modification of the proof of \cite[Theorem~9.29]{PesZab2007}, where slightly different assumptions are used. Moreover, this solution is mean-square continuous, i.e., $X\in\cC([0,T],L^2(\Omega;H))$, which can be seen by using standard arguments analogous to those used in the Gaussian case.

\subsection{Regularity results for the solution process}
\label{subsec:regX}
We are now ready to analyze the Malliavin regularity of the mild solution to Eq.~\eqref{eq:SPDE_additive}.

\begin{prop}\label{prop:DX}
Let Assumption~\ref{as:SPDE} hold, 
let $X=(X(t))_{t\in[0,T]}$ be the mild solution to Eq.~\eqref{eq:SPDE_additive}, 
and consider a fixed 
version of
$DX=(D_{s,x}X(t))_{t\in[0,T],(s,x)\in [0,T]\times U}$ such that
for all $t\in[0,T]$ the identity $D_{s,x}X(t)=D_{s,x}(X(t))$ holds $\bP\otimes\dl s\otimes\nu(\dl x)$-almost everywhere, cf.~Remark~\ref{rem:DXprocess}.
Then for all $t\in[0,T]$ and all $B\in\cB(U)$ with $\nu(B)<\infty$ we have
\begin{align}\label{eq:DX1}
\bE\int_0^T\int_B\int_0^t \Big\|S(t-r)\Big[F\big(X(r)+D_{s,x}X(r)\big)-F\big(X(r)\big)\Big]\Big\|\,\dl r\,\nu(\dl x)\,\dl s<\infty,
\end{align}
so that for all $t\in[0,T]$ the integral $\int_0^tS(t-r)\big[F\big(X(r)+D_{s,x}X(r)\big)-F\big(X(r)\big)\big]\,\dl r$ is defined 
%$\bP_T\otimes(\lambda\otimes\nu)$-almost everywhere 
$\bP\otimes\dl s\otimes\nu(\dl x)$-almost everywhere 
on $\Omega\times[0,T]\times U$ as an $H$-valued Bochner integral.
Moreover, for all $t\in[0,T]$ the equality \eqref{eq:DX2}
%\begin{equation}\label{eq:DX2}
%\begin{aligned}
%D_{s,x}X(t)
%&=
%\mathds{1}_{s\leq t}\cdot\int_s^tS(t-r)\big[F\big(X(r)+D_{s,x}X(r)\big)-F\big(X(r)\big)\big]\,\dl r\\
%&\quad +\mathds{1}_{s\leq t}\cdot S(t-s)x
%%D_{s,x}X(t)
%%&=
%%\mathds{1}_{s\leq t}\cdot\Big(\int_s^tS(t-r)\big[F\big(X(r)+D_{s,x}X(r)\big)-F\big(X(r)\big)\big]\,\dl r+
%%S(t-s)x\Big)
%\end{aligned}
%\end{equation}
holds $\bP\otimes\dl s\otimes\nu(\dl x)$-almost everywhere on $\Omega\times[0,T]\times U$.
\end{prop}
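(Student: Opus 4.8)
The plan is to apply the difference operator $D_{s,x}$ to each of the three terms in the mild solution formula \eqref{eq:SPDE_mild} and to identify the resulting expressions by means of the commutation rules in Propositions~\ref{lem:intDX} and~\ref{prop:comm_space-time2} together with the chain rule in Lemma~\ref{lem:chain}. The first term $S(t)X_0$ is deterministic, so $D_{s,x}(S(t)X_0)=0$. For the stochastic convolution I would first use the isometric embedding $\kappa$ from Subsection~\ref{subsec:square} to rewrite $\int_0^t S(t-r)\,\dl L(r)=\int_0^t\int_U\mathds{1}_{(0,t]}(r)\,S(t-r)y\,\tilde N(\dl r,\dl y)$. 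The integrand $\Phi(r,y):=\mathds{1}_{(0,t]}(r)S(t-r)y$ is deterministic, hence predictable, and lies in $L^2_{\pred}(\Omega\times[0,T]\times U;H)$: the smoothing estimate \eqref{eq:smoothing} with $\rho=1-\beta$ yields $\|S(t-r)y\|\leq C_{1-\beta}(t-r)^{-(1-\beta)/2}\|y\|_U$, and since $(1-\beta)/2<1$ one gets $\int_0^t\int_U\|\Phi(r,y)\|^2\,\nu(\dl y)\,\dl r\leq C_{1-\beta}^2|\nu|_2^2\int_0^t(t-r)^{-(1-\beta)}\,\dl r<\infty$. Because $\Phi$ is deterministic, $D_{s,x}\Phi=0$, so the extra hypothesis of Proposition~\ref{prop:comm_space-time2} is trivially satisfied and \eqref{eq:comm_rel_space-time} gives $D_{s,x}\int_0^t S(t-r)\,\dl L(r)=\mathds{1}_{s\leq t}\,S(t-s)x$, the second term on the right-hand side of \eqref{eq:DX2}.

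For the deterministic time convolution, set $Z(r):=S(t-r)F(X(r))$. I would verify the hypotheses of Proposition~\ref{lem:intDX}: $Z$ is $\cF\otimes\cB([0,T])$-measurable and piecewise stochastically continuous because $X$ is mean-square continuous and $F$ and $S$ act continuously; moreover, combining \eqref{eq:smoothing} with $F\in\Lip^0(H,\dot H^{\beta-1})$ gives $\|Z(r)\|\leq C_{1-\beta}(t-r)^{-(1-\beta)/2}\big(\|F(0)\|_{\dot H^{\beta-1}}+|F|_{\Lip^0(H,\dot H^{\beta-1})}\|X(r)\|\big)$, whence $Z\in L^1([0,t],\dl r;L^2(\Omega;H))$ since $\sup_{r}\|X(r)\|_{L^2(\Omega;H)}<\infty$ and the singular kernel is integrable. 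Proposition~\ref{lem:intDX} then simultaneously furnishes the integrability claim \eqref{eq:DX1} and the identity $D_{s,x}\int_0^t Z(r)\,\dl r=\int_0^t D_{s,x}Z(r)\,\dl r$. Applying the chain rule of Lemma~\ref{lem:chain} to the measurable map $h:=S(t-r)\circ F\colon H\to H$ and using linearity of $S(t-r)$, I obtain $D_{s,x}Z(r)=S(t-r)\big[F(X(r)+D_{s,x}X(r))-F(X(r))\big]$, which reproduces the integrand of the first term in \eqref{eq:DX2}. Here the compatibility of the chosen version of $DX$ (cf.~Remark~\ref{rem:DXprocess}) ensures that $D_{s,x}(X(r))$ and $D_{s,x}X(r)$ may be identified $\bP\otimes\dl s\otimes\nu(\dl x)$-almost everywhere.

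It remains to reduce the range of integration. Since $X$ is an $(\cF_t)$-predictable mild solution, $X(r)$ is $\cF_r$-measurable, so Lemma~\ref{lem:DF_is_zero} yields $D_{s,x}X(r)=0$ for $\bP\otimes\dl s\otimes\nu(\dl x)$-almost all $(s,x)$ with $s>r$; hence the integrand $S(t-r)\big[F(X(r)+D_{s,x}X(r))-F(X(r))\big]$ vanishes whenever $r<s$, which turns $\int_0^t$ into $\mathds{1}_{s\leq t}\int_s^t$ and accounts for the indicator $\mathds{1}_{s\leq t}$ in both terms. Summing the three contributions yields \eqref{eq:DX2}. I expect the main obstacle to be the careful verification of the hypotheses of Proposition~\ref{lem:intDX} for $Z$, namely piecewise stochastic continuity and $L^1$-integrability in the presence of the singularity of $A^{(1-\beta)/2}S(t-r)$ at $r=t$, together with the bookkeeping needed to keep the almost-everywhere identities for the fixed version of $DX$ consistent through the term-by-term differentiation and the final application of Lemma~\ref{lem:DF_is_zero}.
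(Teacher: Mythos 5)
Your proposal is correct and follows essentially the same route as the paper's proof: termwise application of $D_{s,x}$ to the mild solution formula, with Proposition~\ref{lem:intDX} plus the bound $\|S(t-r)F(X(r))\|\leq C_{1-\beta}(t-r)^{\frac{\beta-1}2}\|F(X(r))\|_{\dot H^{\beta-1}}$ handling the time convolution (and yielding \eqref{eq:DX1}), Lemma~\ref{lem:chain} and Lemma~\ref{lem:DF_is_zero} producing the integrand and restricting the integral to $[s,t]$, and Proposition~\ref{prop:comm_space-time2} applied to the deterministic integrand $\mathds{1}_{(0,t]}(r)S(t-r)y$ giving the term $\mathds{1}_{s\leq t}\,S(t-s)x$. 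Your additional verifications of the hypotheses (predictability and square-integrability of the stochastic integrand, piecewise stochastic continuity of $r\mapsto S(t-r)F(X(r))$) are details the paper leaves implicit, and they check out.
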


\begin{proof}
We fix $t\in[0,T]$ and apply the difference operator $D\colon L^0(\Omega;H)\to L^0(\Omega\times[0,T]\times U;H)$ to the single terms in \eqref{eq:SPDE_mild}.
% for fixed $t\in[0,T]$. 
%Then, in a second step, we ensure that we can find versions of the resulting random fields which are jointly measurable in all variables including $t$.
As the initial value $X_0$ is deterministic, it is clear that $D_{s,x}(S(t)X_0)=0$ $\bP\otimes\dl s\otimes\nu(\dl x)$-almost everywhere on $\Omega\times[0,T]\times U$.
Next, observe that by \eqref{eq:smoothing}, the linear growth of $F$ and  \eqref{eq:XL2} we have
\begin{equation}\label{eq:DX3}
\begin{aligned}
&\int_0^t\big\|S(t-r)F(X(r))\big\|_{L^2(\Omega;H)}\dl r\\
&\leq C_{1-\beta}\|F\|_{\Lip(H,\dot H^{\beta-1})}\int_0^t (t-r)^{\frac{\beta-1}2}\big(1+\|X(r)\|_{L^2(\Omega;H)}\big)\,\dl r\\
&\leq C_{1-\beta}\|F\|_{\Lip(H,\dot H^{\beta-1})}\frac{2}{\beta+1}T^{\frac{\beta+1}2}\,\big(1+\sup_{t\in[0,T]}\|X(t)\|_{L^2(\Omega;H)}\big)\,<\infty.
\end{aligned}
\end{equation} 
Proposition~\ref{lem:intDX} thus implies 
\eqref{eq:DX1} 
%that
%$
%\bE\int_0^T\int_B\int_0^t\big\|D_{s,x}\big(S(t-r)F(X(r))\big)\big\|\,\dl r
%\,\nu(\dl x)\,\dl s<\infty
%$
%for all $B\in\cB(U)$ with $\nu(B)<\infty$ 
and that the equality
$
D_{s,x}\int_0^t S(t-r)F(X(r))\,\dl r=\int_0^t D_{s,x}\big(S(t-r)F(X(r))\big)\,\dl r
$
holds
$\bP\otimes\dl s\otimes\nu(\dl x)$-a.e.\ on $\Omega\times[0,T]\times U$. 
Hereby we consider a  version of $\big(D_{s,x}\big(S(t-r)F(X(r))\big)\big)_{r\in[0,t),(s,x)\in[0,T]\times U}$ which is $\cF\otimes\cB([0,t))\otimes\cB([0,T]\times U)$-measurable, cf.~Remark~\ref{rem:DXprocess}. Using also Lemma~\ref{lem:chain} and Lemma~\ref{lem:DF_is_zero},
 we obtain
\begin{equation*}
\begin{aligned}
&D_{s,x}\int_0^t S(t-r)F(X(r))\,\dl r\\
&=\mathds{1}_{s\leq t}\cdot\int_s^tS(t-r)\big[F\big(X(r)+D_{s,x}X(r)\big)-F\big(X(r)\big)\big]\,\dl r
\end{aligned}
\end{equation*}
$\bP\otimes\dl s\otimes\nu(\dl x)$-a.e.\ on $\Omega\times[0,T]\times U$.
Finally, the identity $\int_0^t S(t-r)\,\dl L(r)=\int_0^t\int_U S(t-r)x\,\tilde N(\dl r,\dl x)$ and the commutation relation in Proposition~\ref{prop:comm_space-time2} yield
\begin{align*}
D_{s,x}\int_0^t S(t-r)\,\dl L(r)=\mathds{1}_{s\leq t}\cdot S(t-s)x
\end{align*}
$\bP\otimes\dl s\otimes\nu(\dl x)$-a.e.\ on $\Omega\times[0,T]\times U$.
Summing up, we have shown that \eqref{eq:DX2} holds for every fixed $t\in[0,T]$ as an equality in $L^0(\Omega\times[0,T]\times U;H)$.
%
%Next, we show the estimate \eqref{eq:DX1}, from which we can conclude that there exists a $\cF\otimes\cB([0,T])\otimes\cB([0,T]\times U)$-measurable version of the right hand side of \eqref{eq:DX2} such that the identity \eqref{eq:DX2} holds $\bP(\dl\omega)\,\dl t\,\dl s\,\nu(\dl x)$-almost everywhere on $\Omega_T\times([0,T]\times U)$. To this end, we note that the local $L^q$-estimate for the difference operator $D$ from \cite[Lemma~3.5]{AnderssonLindner2017a} implies
%\begin{equation}\label{eq:DX4}
%\begin{aligned}
%&\sup_{t\in[0,T]}\bE\int_0^T\int_B\int_0^t \big\|D_{s,x}\big(S(t-r)F(X(r))\big)\big\|\,\dl r\,\nu(\dl x)\,\dl s\\
%&=
%\sup_{t\in[0,T]}\int_0^t \Big[\bE\int_0^T\int_B\big\|D_{s,x}\big(S(t-r)F(X(r))\big)\big\|\,\nu(\dl x)\,\dl s\Big]\,\dl r\\
%&=
%\sup_{t\in[0,T]}\int_0^t \big\|\mathds{1}_{\Omega\times ([0,T]\times B)}D\big(S(t-r)F(X(r))\big\|_{L^1(\Omega\times ([0,T]\times B);H)}\,\dl r\\
%&\leq
%C  \sup_{t\in[0,T]}\int_0^t \big\|S(t-r)F(X(r))\big\|_{L^2(\Omega;H)}\,\dl r,
%\end{aligned}
%\end{equation}
%where $C$ is a finite constant depending on $B$; compare also the proof of \cite[Proposition~4.5]{AnderssonLindner2017a}. 
%The term in the last line of \eqref{eq:DX4} is finite as a consequence of \eqref{eq:DX3}. This implies \eqref{eq:DX1} and thus finishes the proof of Proposition~\ref{prop:DX}.
\end{proof}

%\color{black}
%\fbox{continue here}
%This section contains theory for refined Sobolev-Malliavin spaces, essential for the approach to weak error analysis used in this paper. The results are analogous to \cite{AnderssonKovacsLarsson, AnderssonKruseLarsson} for the Gaussian Malliavin calculus. 
%\color{blue}
The refined Malliavin-Sobolev spaces introduced next and the subsequent regularity results have Gaussian counterparts in \cite{AnderssonKovacsLarsson,AnderssonKruseLarsson}.

\begin{definition}[Refined Sobolev-Malliavin spaces]
Consider the setting described in Subsections~\ref{subsec:square} and \ref{subsec:Malliavin_prel}.
For $p,q\in(1,\infty]$ we define $\bM^{1,p,q}(H)$ as the space consisting of all $F\in L^p(\Omega;H)$ such that $DF\in L^p(\Omega;L^q([0,T];L^2(U;H)))$. It is equipped with the seminorm 
$
  |F|_{\bM^{1,p,q}(H)}
 : =
  \|DF\|_{L^p(\Omega;L^q([0,T];L^2(U;H)))}
$ and norm
\begin{align*}
  \|F\|_{\bM^{1,p,q}(H)}
  :=
  \Big(
    \|F\|_{L^p(\Omega;H)}^p
    +
    |F|_{\bM^{1,p,q}(H)}^p
  \Big)^\frac1p.
\end{align*}
%For $p=q$ we write $\bM^{1,p}(H)=\bM^{1,p,p}(H)$ and 
For $p,p',q,q'\in(1,\infty)$ such that $\frac1{p}+\frac{1}{p'}=\frac1{q}+\frac{1}{q'}=1$, the space 
$\bM^{-1,p',q'}(H)$ is defined as the (topological) dual space of $\bM^{1,p,q}(H)$.
\end{definition}

Arguing as in \cite[Proposition~3.7]{AnderssonLindner2017a} one finds that  $\bM^{1,p,q}(H)$ is a Banach space for all $p,q\in(1,\infty)$. If additionally $p\in[2,\infty)$, then $\bM^{1,p,q}(H)$ is continuously embedded in $L^2(\Omega;H)$. This embedding is dense according to \cite[Lemma~3.8]{AnderssonLindner2017a}. In this situation we will use the Gelfand triple $\bM^{1,p,q}(H)\subset L^2(\Omega;H)\subset \bM^{-1,p',q'}(H)$.

%The following regularity result has Gaussian counterparts in \cite[Proposition 4.4]{AnderssonKovacsLarsson} and \cite[Proposition 3.11]{AnderssonKruseLarsson}. 

\label{subsec:weak1}
\begin{prop}[Regularity I]\label{thm:reg2}
Let Assumption~\ref{as:SPDE}
hold. 
Depending on the value of $\beta\in(0,1]$, we assume either that $q\in(1,\tfrac2{1-\beta})$ if $\beta\in(0,1)$ or $q=\infty$ if $\beta=1$.
%Let $q\in(2,\infty]$ be an extended real number satisfying $q=\infty$ if $\beta=1$ and $q\in(1,\tfrac2{1-\beta})$ otherwise. 
Then it holds that
\begin{align}\label{eq:additive_reg}
  \sup_{t\in[0,T]}|X(t)|_{\bM^{1,\infty,q}(H)}<\infty.
\end{align}
As a consequence, we also have $\sup_{t\in[0,T]}\|X(t)\|_{\bM^{1,2,q}(H)}<\infty$.
%and in particular for all finite Borel measures $\mu$ on $[0,T]$ it holds that 
%\begin{align*}
%  X\in L^1([0,T],\mu;\bM^{1,2,q}(H)).
%\end{align*}
\end{prop}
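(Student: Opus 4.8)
The plan is to turn the representation \eqref{eq:DX2} into a scalar integral inequality for $\|D_{s,x}X(t)\|$ with a weakly singular kernel, solve it by a generalized Gronwall argument, and then integrate the resulting bound against $\nu(\dl x)$ and $\dl s$. First I would fix $s\le t$ and estimate the two terms in \eqref{eq:DX2}. Since $S$ commutes with powers of $A$, writing $S(t-r)w=A^{\frac{1-\beta}2}S(t-r)\,A^{\frac{\beta-1}2}w$ and using the smoothing estimate \eqref{eq:smoothing} with $\rho=1-\beta\ge0$ gives $\|S(t-r)w\|\le C_{1-\beta}(t-r)^{-\frac{1-\beta}2}\|w\|_{\dot H^{\beta-1}}$. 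Applying this to $w=F(X(r)+D_{s,x}X(r))-F(X(r))$, together with the bound $\|F(y)-F(z)\|_{\dot H^{\beta-1}}\le|F|_{\Lip^0(H,\dot H^{\beta-1})}\|y-z\|$, and to the noise term with $x\in U=\dot H^{\beta-1}$, yields $\bP\otimes\dl s\otimes\nu(\dl x)$-a.e.\ for $s\le t$
\begin{align*}
\|D_{s,x}X(t)\|
&\le C_{1-\beta}(t-s)^{-\frac{1-\beta}2}\|x\|_U\\
&\quad+C_{1-\beta}|F|_{\Lip^0(H,\dot H^{\beta-1})}\int_s^t(t-r)^{-\frac{1-\beta}2}\|D_{s,x}X(r)\|\,\dl r.
\end{align*}

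Next I would apply a generalized Gronwall lemma of Henry type for weakly singular kernels. As $1-\beta<1$ the kernel $(t-r)^{-\frac{1-\beta}2}$ is integrable, so the lemma supplies a \emph{deterministic} constant $C=C(T,\beta,C_{1-\beta},|F|_{\Lip^0(H,\dot H^{\beta-1})})$ with
\[
\|D_{s,x}X(t)\|\le C\,(t-s)^{-\frac{1-\beta}2}\|x\|_U,\qquad \bP\otimes\dl s\otimes\nu(\dl x)\text{-a.e., }s\le t.
\]
Because the constant is independent of $\omega$, this is in fact an $L^\infty(\Omega)$ bound, which is exactly what the $\bM^{1,\infty,q}$-seminorm requires.

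Finally I integrate the bound. Squaring and integrating against $\nu(\dl x)$ and invoking $|\nu|_2<\infty$ from \eqref{eq:ass_nu} gives $\big(\int_U\|D_{s,x}X(t)\|^2\,\nu(\dl x)\big)^{1/2}\le C|\nu|_2\,(t-s)^{-\frac{1-\beta}2}$. Raising to the power $q$ and integrating over $s\in[0,t]$ (recall $D_{s,x}X(t)=0$ for $s>t$) produces $\int_0^t(t-s)^{-\frac{q(1-\beta)}2}\,\dl s$, which is finite precisely when $\frac{q(1-\beta)}2<1$, i.e.\ $q<\frac2{1-\beta}$ — exactly the standing hypothesis; for $\beta=1$ the kernel has no singularity and $q=\infty$ is admissible. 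This bounds $|X(t)|_{\bM^{1,\infty,q}(H)}$ uniformly in $t$ and proves \eqref{eq:additive_reg}. The stated consequence follows at once: $\|X(t)\|_{L^2(\Omega;H)}$ is bounded by \eqref{eq:XL2}, and the contractive embedding $L^\infty(\Omega)\hookrightarrow L^2(\Omega)$ yields $|X(t)|_{\bM^{1,2,q}(H)}\le|X(t)|_{\bM^{1,\infty,q}(H)}$.

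The main obstacle is the rigorous justification of the singular Gronwall step. One first needs that $r\mapsto\|D_{s,x}X(r)\|$ is locally integrable on $[s,t]$ for a.e.\ $(\omega,s,x)$ so that the Henry--Gronwall lemma applies and produces a clean $\omega$-independent constant uniform in $s$; this integrability can be extracted from the bound \eqref{eq:DX1} already established in Proposition~\ref{prop:DX} (via Fubini), or, avoiding an a priori estimate altogether, by running a Picard iteration starting from $S(t-s)x$ and showing the resulting series converges thanks to the integrable kernel. Everything else is a routine computation once this point is settled.
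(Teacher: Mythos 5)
Your proposal is correct and follows essentially the same route as the paper: the pointwise estimate you derive from \eqref{eq:DX2} is exactly the paper's inequality \eqref{eq:regI1}, the singular Gronwall step is the paper's Lemma~\ref{lem:gronwall}, and the integration against $\nu(\dl x)$ and $\dl s$ under the condition $q<\tfrac{2}{1-\beta}$ matches the paper's computation. The ``main obstacle'' you flag is precisely what the paper handles by constructing a version of $DX$ on which \eqref{eq:regI1} and the integrability bound \eqref{eq:regI2} (obtained, as you suggest, from Proposition~\ref{lem:intDX} and \eqref{eq:XL2}) hold everywhere, so that the pathwise Gronwall argument applies with a deterministic constant.
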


\begin{proof}
We consider a fixed 
version of
$DX=(D_{s,x}X(t))_{t\in[0,T],(s,x)\in [0,T]\times U}$ such that
for all $t\in[0,T]$ the identity $D_{s,x}X(t)=D_{s,x}(X(t))$ holds $\bP\otimes\dl s\otimes\nu(\dl x)$-almost everywhere, cf.~Remark~\ref{rem:DXprocess}.
As a consequence of Proposition~\ref{prop:DX}, the smoothing property \eqref{eq:smoothing}, the fact that $U=\dot H^{\beta-1}$ and the Lipschitz continuity of $F$, we know that for all $t\in[0,T]$ the estimate
\begin{equation}\label{eq:regI1}
\begin{aligned}
\|D_{s,x}X(t)\|
&\leq
\mathds{1}_{s\leq t}\cdot C_{1-\beta}|F|_{\Lip^0(H,\dot H^{\beta-1})}
\int_s^t(t-r)^{\frac{\beta-1}2}\|D_{s,x}X(r)\|\,\dl r\\
&\quad+\mathds{1}_{s\leq t}\cdot C_{1-\beta}\|x\|_{U}(t-s)^{\frac{\beta-1}2}
%&\leq
%\mathds{1}_{s\leq t}\cdot\|S(t-s)x\|\\
%&\quad
%+\mathds{1}_{s\leq t}\cdot\int_s^t\big\|S(t-r)\big[F\big(X(r)+D_{s,x}X(r)\big)-F\big(X(r)\big)\big]\big\|\,\dl r\\
%&\leq
%\mathds{1}_{s\leq t}\cdot C_{1-\beta}\|x\|_{U}(t-s)^{\frac{\beta-1}2}\\
%&\quad+\mathds{1}_{s\leq t}\cdot C_{1-\beta}|F|_{\Lip^0(H,\dot H^{\beta-1})}
%\int_s^t(t-r)^{\frac{\beta-1}2}\|D_{s,x}X(r)\|\,\dl r
\end{aligned}
\end{equation}
holds $\bP\otimes\dl s\otimes\nu(\dl x)$-almost everywhere on $\Omega\times[0,T]\times U$.
Moreover, Proposition~\ref{lem:intDX} and $\eqref{eq:XL2}$ imply that
\begin{align}\label{eq:regI2}
\int_0^T\|D_{s,x}X(t)\|\,\dl t<\infty
\end{align}
$\bP\otimes\dl s\otimes\nu(\dl x)$-almost everywhere on $\Omega\times[0,T]\times U$.

In order to be able to apply the generalized Gronwall Lemma~\ref{lem:gronwall}, we construct a new version of $(D_{s,x}X(t))_{t\in[0,T],(s,x)\in [0,T]\times U}$ such that the estimates \eqref{eq:regI1} and \eqref{eq:regI2} hold \emph{everywhere} on $\Omega\times[0,T]\times[0,T]\times U$ and $\Omega\times[0,T]\times U$, respectively.
For this purpose, let $A\in\cF\otimes\cB([0,T])\otimes\cB([0,T]\times U)$ be 
the set consisting of all $(\omega,t,s,x)\in\Omega\times[0,T]\times[0,T]\times U$ for which 
\eqref{eq:regI1} holds.
%such that $(\bP\otimes\lambda\otimes(\lambda\otimes\mu))(A^c)=0$ and such that \eqref{eq:regI1} holds everywhere on $A$. 
Let $B\in\cF\otimes\cB([0,T]\times U)$
be the set consisting of all $(\omega,s,x)\in\Omega\times[0,T]\times U$ for which \eqref{eq:regI1} holds $\dl t$-almost everywhere on $[0,T]$. 
Finally, let $C\in\cF\otimes\cB([0,T]\times U)$
be the set consisting of all $(\omega,s,x)\in\Omega\times[0,T]\times U$ for which \eqref{eq:regI2} holds. 
%be such that $(\bP\otimes(\lambda\otimes\nu))(B^c)=0$ and such that \eqref{eq:regI2} holds everywhere on $B$. 
Let $\Gamma\colon \Omega\times[0,T]\times[0,T]\times U\to H$ be defined by $\Gamma:=\mathds{1}_{A\cap \pi^{-1}(B\cap C)}DX$, where $\pi\colon \Omega\times[0,T]\times[0,T]\times U\to \Omega\times[0,T]\times U$ is the coordinate projection given by $\pi(\omega,t,s,x):=(\omega,s,x)$.
Note that for all $t\in[0,T]$ the identity $\Gamma(\cdot,t,\cdot,\cdot)=DX(t)$ holds $\bP\otimes\lambda\otimes\nu$-almost everywhere on $\Omega\times[0,T]\times U$.
 We choose $\Gamma$ as our new version of $DX$ and henceforth write $DX=(D_{s,x}X(t))_{t\in[0,T],(s,x)\in [0,T]\times U}$ instead of $\Gamma$ to simplify notation. Observe that for this new version the estimates \eqref{eq:regI1} and \eqref{eq:regI2} hold indeed everywhere on $\Omega\times[0,T]\times[0,T]\times U$ and $\Omega\times[0,T]\times U$, respectively.
The generalized Gronwall Lemma~\ref{lem:gronwall} thus implies that there exists a constant $C=C\big(C_{1-\beta}|F|_{\Lip^0(H,\dot H^{\beta-1})},T,\beta\big)\in[0,\infty)$ such that the estimate
\begin{align}\label{eq:regI3}
\|D_{s,x}X(t)\|\leq \mathds{1}_{s\leq t}\,C\,C_{1-\beta}\|x\|_U(t-s)^{\frac{\beta-1}2}
\end{align}
holds everywhere on $\Omega\times[0,T]\times[0,T]\times U$.

Assume the case where $\beta\in(0,1)$, $q\in (1,\frac2{1-\beta})$ and consider the version of $DX=(D_{s,x}X(t))_{t\in[0,T],(s,x)\in [0,T]\times U}$ constructed above. Integration of \eqref{eq:regI3} yields 
\begin{align*}
&\sup_{t\in[0,T]}\Big[\int_0^T\Big(\int_U\|D_{s,x}X(t)\|^2\nu(\dl x)\Big)^{\frac q2}\,\dl s\Big]^{\frac1q}\\
&\leq
C\,C_{1-\beta}\,|\nu|_2\sup_{t\in[0,T]}\Big[\int_0^t(t-s)^{q\cdot\frac{\beta-1}2}\,\dl s\Big]^{\frac1q}\leq
C\,C_{1-\beta}\,|\nu|_2\frac{1}{(q\cdot\frac{\beta-1}2+1)^{\frac1q}}T^{\frac{\beta-1}2+\frac1q},
\end{align*}
which implies \eqref{eq:additive_reg}. 
%where we consider the the essential supremum to take account of the fact that we have changed the version of $DX$ in the course of the proof. 
The case where $\beta=1$ and $q=\infty$ is treated similarly. Finally, the second assertion in Proposition~\ref{thm:reg2} follows from \eqref{eq:additive_reg} and \eqref{eq:XL2}. 
\end{proof}

%The next proposition is an analogue of \cite[Theorem 3.6]{AnderssonKruseLarsson} in the Gaussian case. The fact that $p',q'\in (1,2]$ is what makes it useful in weak error analysis for SPDE. 

\begin{prop}[Negative norm inequality]\label{prop:negnorm2}
Consider the setting described in \linebreak Subsections~\ref{subsec:square} and \ref{subsec:Malliavin_prel}.
Let $p',q'\in(1,2]$. For predictable integrands 
$\Phi\in L_{\mathrm{pr}}^2(\Omega\times[0,T]\times U;H)$ it holds that
\begin{align*}
  \Big\|
    \int_0^T \int_U
      \Phi(t,y)
    \tilde N(\diffin t, \diffin y)
  \Big\|_{\bM^{-1,p',q'}(H)}
  \leq
  \|\Phi\|_{L^{p'}(\Omega;L^{q'}([0,T];L^2(U;H)))}.
\end{align*}
\end{prop}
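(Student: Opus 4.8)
The plan is to combine the Gelfand-triple description of the dual norm with the duality formula of Proposition~\ref{prop:duality}. Write $p,q\in[2,\infty)$ for the conjugate exponents determined by $\frac1p+\frac1{p'}=\frac1q+\frac1{q'}=1$, and set $G:=\int_0^T\int_U\Phi(t,y)\,\tilde N(\dl t,\dl y)$. Since $\Phi\in L^2_{\pred}(\Omega\times[0,T]\times U;H)$, the It\^o isometry gives $G\in L^2(\Omega;H)$, so $G$ acts as a bounded linear functional on $\bM^{1,p,q}(H)$ through the $L^2(\Omega;H)$ inner product, and
\begin{equation*}
\|G\|_{\bM^{-1,p',q'}(H)}=\sup\Big\{\big|\bE\langle G,F\rangle\big|:F\in\bM^{1,p,q}(H),\ \|F\|_{\bM^{1,p,q}(H)}\leq1\Big\}.
\end{equation*}
It therefore suffices to bound $|\bE\langle G,F\rangle|$ by $\|\Phi\|_{L^{p'}(\Omega;L^{q'}([0,T];L^2(U;H)))}$ for every such $F$.

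First I would check that $\bM^{1,p,q}(H)\subset\bD^{1,2}(H)$ whenever $p,q\geq2$, so that the duality formula is applicable: since $\bP$ is a probability measure and $\lambda|_{[0,T]}$ is finite, the nesting of $L^r$-spaces on finite measure spaces yields $\|DF\|_{L^2(\Omega;L^2([0,T];L^2(U;H)))}\leq T^{\frac12-\frac1q}\,|F|_{\bM^{1,p,q}(H)}<\infty$, while $F\in L^p(\Omega;H)\subset L^2(\Omega;H)$; hence $DF\in L^2(\Omega\times[0,T]\times U;H)$. Proposition~\ref{prop:duality} then gives
\begin{equation*}
\bE\langle G,F\rangle=\bE\int_0^T\int_U\big\langle D_{t,x}F,\,\Phi(t,x)\big\rangle\,\nu(\dl x)\,\dl t.
\end{equation*}

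The core estimate is a threefold application of Cauchy--Schwarz and H\"older. Bounding the inner product pointwise and applying Cauchy--Schwarz in $L^2(U;H)$ to the $\nu$-integral gives
\begin{equation*}
\big|\bE\langle G,F\rangle\big|\leq\bE\int_0^T\Big(\int_U\|D_{t,x}F\|^2\,\nu(\dl x)\Big)^{\frac12}\Big(\int_U\|\Phi(t,x)\|^2\,\nu(\dl x)\Big)^{\frac12}\dl t.
\end{equation*}
Applying H\"older in $t$ with exponents $(q,q')$ and then H\"older in $\omega$ with exponents $(p,p')$ factorizes the right-hand side into $\|DF\|_{L^p(\Omega;L^q([0,T];L^2(U;H)))}$ times $\|\Phi\|_{L^{p'}(\Omega;L^{q'}([0,T];L^2(U;H)))}$. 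Since the first factor equals $|F|_{\bM^{1,p,q}(H)}\leq\|F\|_{\bM^{1,p,q}(H)}\leq1$, taking the supremum over the unit ball yields the claim. The argument is essentially routine once the framework is set up; the only points requiring care are the identification of the dual pairing with the $L^2$ inner product on the dense subspace $\bM^{1,p,q}(H)$ and the verification $\bM^{1,p,q}(H)\subset\bD^{1,2}(H)$ that licenses the use of the duality formula.
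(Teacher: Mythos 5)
Your proof is correct and follows essentially the same route as the paper: express the $\bM^{-1,p',q'}(H)$-norm as a supremum over the unit ball of $\bM^{1,p,q}(H)$ via the Gelfand triple, convert the pairing with the duality formula of Proposition~\ref{prop:duality}, and conclude by H\"older. The only difference is that you spell out the iterated Cauchy--Schwarz/H\"older steps and the embedding $\bM^{1,p,q}(H)\subset\bD^{1,2}(H)$, which the paper leaves implicit.
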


\begin{proof}
Let $p,q\in[2,\infty)$ satisfy $\tfrac1p+\tfrac1{p'}=\tfrac1q+\tfrac1{q'}=1$. By the duality formula from Proposition~\ref{prop:duality}, duality in the Gelfand triple $\bM^{1,p,q}(H)
  \subset L^2(\Omega;H)
  \subset
  \bM^{-1,p',q'}(H)$, and by the H\"older inequality it holds that
\begin{align*}
&\Big\|\int_0^T \int_U
      \Phi(t,y)
    \tilde N(\diffin t, \diffin y)\Big\|_{\bM^{-1,p',q'}(H)}\\
&\quad
  =
  \sup_{Z\in\bM^{1,p,q}(H)\setminus\{0\}}
  \frac{
    \big\langle Z,\int_0^T \int_U
      \Phi(t,y)
    \tilde N(\diffin t, \diffin y)\big\rangle_{L^2(\Omega;H)}
  }{
    \|Z\|_{\bM^{1,p,q}(H)}
  }\\
&\quad
  =
  \sup_{Z\in\bM^{1,p,q}(H)\setminus\{0\}}
  \frac{
    \langle D Z,\Phi \rangle_{L^2(\Omega\times[0,T]\times U;H)}
  }{
    \|Z\|_{\bM^{1,p,q}(H)}
  }\\
&\quad
  \leq
  \sup_{Z\in\bM^{1,p,q}(H)\setminus\{0\}}
  \frac{
    \| D Z\|_{L^{p}(\Omega;L^{q}([0,T];L^2(U;H)))}
    \|\Phi \|_{L^{p'}(\Omega;L^{q'}([0,T];L^2(U;H)))}
  }{
    \|Z\|_{\bM^{1,p,q}(H)}
  }\\
&\quad
  \leq
  \|\Phi\|_{L^{p'}(\Omega;L^{q'}([0,T];L^2(U;H)))}.\qedhere
\end{align*}
%This competes the proof.
\end{proof}

%Our second regularity result for the mild solution $X$ to Eq.~\eqref{eq:SPDE_additive} concerns continuity of in negative order spaces and has Gaussian counterparts in \cite[Proposition 3.2]{AnderssonKovacsLarsson} and \cite[Proposition 3.12]{AnderssonKruseLarsson}.

\begin{prop}[Regularity II]\label{thm:reg3}
Let Assumption~\ref{as:SPDE} hold 
%\todo{check}
and $X=(X(t))_{t\in[0,T]}$ be the mild solution to Eq.~\eqref{eq:SPDE_additive}.  For all $\gamma\in[0,\beta)$ and $q'=\tfrac2{1+\gamma}$ there exist a constant 
$C\in[0,\infty)$
%\todo{check}$C=C(\gamma,\beta,\delta,A,F,\nu,X_0,T)\in(0,\infty)$ 
such that
\begin{align*}
  \|X(t_2)-X(t_1)\|_{\bM^{-1,2,q'}(H)}
  \leq
  C |t_2 - t_1|^{\gamma},
  \quad
  t_1,t_2\in [0,T].
\end{align*}
\end{prop}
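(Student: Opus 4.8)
The plan is to split the time increment $X(t_2)-X(t_1)$ (assuming $t_1\le t_2$ without loss of generality) into the three contributions coming from the mild formulation \eqref{eq:SPDE_mild}: the semigroup acting on the initial value, the deterministic drift integral, and the stochastic convolution. For the first two, the increment is $\cF_{t_1}$-measurable (the initial value is deterministic and the drift integral up to $t_1$ is adapted), so their Malliavin derivatives vanish on $(t_1,T]$ by Lemma~\ref{lem:DF_is_zero}; the $\bM^{-1,2,q'}$-norm of such terms can therefore be controlled by the $L^2(\Omega;H)$-norm alone, and the required H\"older-in-time bound follows from the strong temporal regularity of $X$ via the smoothing and continuity estimates \eqref{eq:smoothing}--\eqref{eq:continuity} together with the $\dot H^{2\beta}$-regularity of $X_0$ and the time-regularity of $X$ established in Proposition~\ref{thm:reg2}.

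The genuinely new ingredient is the stochastic convolution term. First I would write
\begin{align*}
\int_0^{t_2}\!S(t_2-r)\,\dl L(r)-\int_0^{t_1}\!S(t_1-r)\,\dl L(r)
=\int_0^{t_1}\!\big(S(t_2-r)-S(t_1-r)\big)\dl L(r)+\int_{t_1}^{t_2}\!S(t_2-r)\,\dl L(r),
\end{align*}
and express both pieces as integrals against $\tilde N$ using the identity $\int_0^t S(t-r)\,\dl L(r)=\int_0^t\int_U S(t-r)x\,\tilde N(\dl r,\dl x)$ from Subsection~\ref{subsec:square}. The plan is then to apply the negative norm inequality of Proposition~\ref{prop:negnorm2} with $p'=2$ and $q'=\tfrac{2}{1+\gamma}$ to each piece, reducing everything to bounding the $L^2(\Omega;L^{q'}([0,T];L^2(U;H)))$-norm of the corresponding integrands $\mathds{1}_{(0,t_1]}(r)(S(t_2-r)-S(t_1-r))x$ and $\mathds{1}_{(t_1,t_2]}(r)S(t_2-r)x$. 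Since the integrands are deterministic in $\omega$ apart from the variable $x\in U=\dot H^{\beta-1}$, the $L^2(U;\,\cdot\,)$-norm contributes a factor $|\nu|_2$ after using $\|S(\sigma)x\|\le C_{1-\beta}\sigma^{\frac{\beta-1}{2}}\|x\|_U$ from \eqref{eq:smoothing}.

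The core computation is thus two deterministic time integrals. For the second piece one gets $\big(\int_{t_1}^{t_2}C_{1-\beta}^{q'}(t_2-r)^{q'\frac{\beta-1}{2}}\,\dl r\big)^{1/q'}$, which with $q'=\tfrac{2}{1+\gamma}$ produces an exponent $\tfrac{\beta-1}{2}+\tfrac1{q'}=\tfrac{\beta-\gamma}{2}>0$ on $|t_2-t_1|$, hence a bound of order $|t_2-t_1|^{(\beta-\gamma)/2}\ge|t_2-t_1|^{\gamma}$ for $\gamma<\beta$. For the first piece one uses the temporal continuity estimate: writing $S(t_2-r)-S(t_1-r)=(S(t_2-t_1)-\id_H)S(t_1-r)$ and applying \eqref{eq:continuity} with a fractional power $A^{\rho/2}$ against $A^{-\rho/2}S(t_1-r)$, one distributes smoothing between the two factors so that the singularity in $(t_1-r)$ remains integrable in the $L^{q'}$-sense while extracting a factor $|t_2-t_1|^{\gamma}$. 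I expect the main obstacle to be precisely this balancing: one must choose the fractional exponent $\rho$ so that $|t_2-t_1|^{\rho/2}\gtrsim|t_2-t_1|^{\gamma}$ while simultaneously keeping the exponent $\tfrac{\beta-1-\rho}{2}+\tfrac1{q'}$ of the remaining time integral non-negative, and verifying that the admissible range of $\rho$ is nonempty for every $\gamma\in[0,\beta)$ with $q'=\tfrac2{1+\gamma}$. Collecting the three contributions and taking the maximum of the resulting H\"older exponents (all $\ge\gamma$) against a fixed bounded time interval $[0,T]$ yields the claimed estimate with a finite constant $C$.
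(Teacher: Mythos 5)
Your strategy coincides with the paper's proof: split the increment via the mild formulation, control the initial-value and drift contributions through the continuous embedding $L^2(\Omega;H)\subset\bM^{-1,2,q'}(H)$, and handle the two pieces of the stochastic convolution with the negative norm inequality of Proposition~\ref{prop:negnorm2} after factoring out $A^{\frac{\beta-1}{2}}x$. Three corrections, all local. First, there is an arithmetic slip in the piece over $(t_1,t_2]$: with $q'=\tfrac{2}{1+\gamma}$ one has $\tfrac{\beta-1}{2}+\tfrac{1}{q'}=\tfrac{\beta-1}{2}+\tfrac{1+\gamma}{2}=\tfrac{\beta+\gamma}{2}$, not $\tfrac{\beta-\gamma}{2}$; as written, your inequality $|t_2-t_1|^{(\beta-\gamma)/2}\lesssim|t_2-t_1|^{\gamma}$ fails for $\gamma\in(\beta/3,\beta)$, whereas the correct exponent satisfies $\tfrac{\beta+\gamma}{2}\geq\gamma$ for all $\gamma\leq\beta$, which is what rescues the full range $\gamma\in[0,\beta)$. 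Second, the balancing you flag as the main obstacle in the piece over $(0,t_1]$ does close, and you should carry it out: take $\rho=2\gamma$, so the prefactor from \eqref{eq:continuity} is $C_{2\gamma}(t_2-t_1)^{\gamma}$ and the remaining time integrand is $(t_1-r)^{-q'(\gamma+\frac{1-\beta}{2})}$, which is integrable precisely because $\tfrac{2}{1+\gamma}\cdot\tfrac{2\gamma+1-\beta}{2}<1$ is equivalent to $\gamma<\beta$ (this is \eqref{eq:par_calc} in the paper); the same choice $\rho=2\gamma$ handles the corresponding drift piece. Third, the appeal to $\cF_{t_1}$-measurability and Lemma~\ref{lem:DF_is_zero} is unnecessary, and the piece $\int_{t_1}^{t_2}S(t_2-s)F(X(s))\,\dl s$ is in any case not $\cF_{t_1}$-measurable: the Gelfand triple already gives $\|\cdot\|_{\bM^{-1,2,q'}(H)}\leq\|\cdot\|_{L^2(\Omega;H)}$ for every square-integrable random variable, and the drift terms need only \eqref{eq:XL2}, the linear growth of $F$, and \eqref{eq:smoothing}--\eqref{eq:continuity}; no temporal regularity of $X$ is required, and Proposition~\ref{thm:reg2} (which provides a uniform Malliavin bound, not H\"older continuity in time) plays no role here.
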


\begin{proof}
Let $0\leq t_1\leq t_2\leq T$. Representing the increment $X(t_2)-X(t_1)$ via \eqref{eq:SPDE_mild}, 
%the increment can be written in the form
%\begin{align*}
%&X(t_2) - X(t_1)
%  =
%  (S(t_2-t_1)-\id_H)S(t_1)X_0\\
%&\quad
%  +
%  \int_0^{t_1}
%    (S(t_2-t_1)-\id_H)S(t_1-s)
%    F(X(s))
%  \diffout s
%  +
%  \int_{t_1}^{t_2}
%    S(t_2-s)F(X(s))
%  \diffout s\\
%&\quad
%  +
%  \int_0^{t_1}
%    (S(t_2-t_1)-\id_H) S(t_1-s)
%  \diffout L(s)  
%  +
%  \int_{t_1}^{t_2}
%    S(t_2-s)
%  \diffout L(s).
%\end{align*}
taking norms and using the continuous embedding $L^2(\Omega;H)\subset \bM^{-1,2,q'}(H)$, we obtain
\begin{align*}
&\|
    X(t_2) - X(t_1) 
  \|_{\bM^{-1,2,q'}(H)}
  \leq
  \|
    (S(t_2-t_1)-\id_H)A^{-\gamma}S(t_1) A^{\gamma}X_0
  \|\\
&\quad
  +
  \Big\|
    \int_0^{t_1}
      (S(t_2-t_1)-\id_H)A^{-\gamma}A^\gamma S(t_1-s)A^{\frac{1-\beta}2}A^{\frac{\beta-1}2}
      F(X(s))
    \diffout s
  \Big\|_{L^2(\Omega;H)}\\
&\quad
  +
  \Big\|
    \int_{t_1}^{t_2}
      S(t_2-s))A^{\frac{1-\beta}2}A^{\frac{\beta-1}2}F(X(s))
    \diffout s
  \Big\|_{L^2(\Omega;H)}\\
&\quad
  +
  \Big\|
    \int_0^{t_1}\int_{\dot H^{\beta-1}}
      (S(t_2-t_1)-\id_H)A^{-\gamma}A^{\gamma}S(t_1-s)x
      \,\tilde N(\dl s,\dl x)  
  \Big\|_{\bM^{-1,2,q'}(H)}\\
&\quad
  +
  \Big\|
    \int_{t_1}^{t_2}\int_{\dot H^{\beta-1}}
      S(t_2-s)A^{\frac{1-\beta}2}A^{\frac{\beta-1}2}x
    \,\tilde N(\dl s,\dl x)
  \Big\|_{\bM^{-1,2,q'}(H)}.
\end{align*}
Further, from \eqref{eq:smoothing}, \eqref{eq:continuity}, \eqref{eq:XL2}, the linear growth of $F$ and the negative norm inequality in Proposition~\ref{prop:negnorm2} we obtain
\begin{equation}\label{eq:long}
\begin{split}
&\|
    X(t_2) - X(t_1) 
  \|_{\bM^{-1,2,q'}(H)}
  \,\leq\,
  C_0 C_{2\gamma}
  \|
    X_0
  \|_{\dot H^{2\gamma}}(t_2-t_1)^\gamma\\
&
  +
  C_{2\gamma+1-\beta}
  \int_0^{t_1}(t_1-s)^{-\gamma-\frac{1-\beta}2}\dl s\,
  C_{2\gamma}\,(t_2-t_1)^\gamma\\
&\quad\cdot\|F\|_{\Lip^0(H,\dot H^{\beta-1})}
  \big(1+\sup_{t\in[0,T]}
    \|X(t)\|_{L^2(\Omega;H)}\big)\\
&
  +
  C_{1-\beta}\int_{t_1}^{t_2}(t_2-s)^{-\frac{1-\beta}2}\,\dl s\,
    \|F\|_{\Lip^0(H,\dot H^{\beta-1})}
  \big(1+\sup_{t\in[0,T]}
    \|X(t)\|_{L^2(\Omega;H)}\big)\\
&
  +
  \Big[
    \int_0^{t_1}\Big(\int_{\dot H^{\beta-1}}
      \big\|
        (S(t_2-t_1)-\id_H)A^{-\gamma}A^{\gamma}S(t_1-s)
        A^{\frac{1-\beta}2}A^{\frac{\beta-1}2}x
      \big\|^2\nu(\dl x)
      \Big)^{\frac{q'}2}
    \diffout s
  \Big]^{\frac1{q'}}\\
&
  +
  \Big[
    \int_{t_1}^{t_2}\Big(\int_{\dot H^{\beta-1}}
      \big\|
        S(t_2-s)A^{\frac{1-\beta}2}A^{\frac{\beta-1}2}x
      \big\|^2\nu(\dl x)
      \Big)^{\frac{q'}2}
    \diffout s
  \Big]^{\frac1{q'}}.
\end{split}
\end{equation}
%\todo{I changed $\cL_2(U_0,H)$ to $...\nu(\dl x)$ whenever possible in order to avoid that some calculations look exactly as in the Gaussian case}
Note that $\gamma+\frac{1-\beta}2<1$ and thus the integral in the second term on the right hand side of \eqref{eq:long} is bounded by $(-\gamma+\frac{1+\beta}2)^{-1}T^{-\gamma+\frac{1+\beta}2}$.
The integral in the third term on the right hand side of \eqref{eq:long} satisfies
$\int_{t_1}^{t_2}(t_2-s)^{-\frac{1-\beta}2}\,\dl s
=\frac{2}{1+\beta}(t_2-t_1)^{\frac{1+\beta}2}
\lesssim (t_2-t_1)^\gamma.$
The fourth term on the right hand side of \eqref{eq:long} can be estimated by
$
  C_{2\gamma}\,(t_2-t_1)^\gamma\,
  C_{2\gamma + (1-\beta)}
  \big(
    \int_0^{t_1}
      (t_1-s)^{-\frac2{1+\gamma} \frac{2\gamma+1-\beta}2}
    \diffout s
  \big)^{\frac{1+\gamma}2}\,|\nu|_2
  \lesssim (t_2-t_1)^\gamma.
$
The latter integral is bounded by $\int_0^{T}
      s^{-\frac2{1+\gamma} \frac{2\gamma+1-\beta}2}
    \diffout s$, which is finite since
\begin{align}\label{eq:par_calc}
  \frac2{1+\gamma} \frac{2\gamma+1-\beta}2
  =
  \frac{1 + \gamma -(\beta-\gamma)}{1+\gamma}
  <1.
\end{align}
Finally, the the last term \eqref{eq:long} is bounded by
$
  C_{1-\beta}\,
  \big(
    \int_{t_1}^{t_2}
      (t_2-s)^{\frac2{1+\gamma}\frac{\beta-1}2}    
    \diffout s
  \big)^{\frac{1+\gamma}2}|\nu|_2
  =
  \big(\frac{\beta+\gamma}{1+\gamma}\big)^{-\frac{1+\gamma}2}C_{1-\beta}|\nu|_2
  (
    t_2-t_1
  )^{\frac{\beta+\gamma}2}
  \lesssim (t_2-t_1)^\gamma.
$
This completes the proof.
\end{proof}

\subsection{Auxiliary results on refined Malliavin Sobolev spaces}
\label{subsec:auxiliary}
In the sequel, we consider the setting described in Subsection~\ref{subsec:square} and \ref{subsec:Malliavin_prel}.

\begin{lemma}\label{lem:op_norm}
Let $p,q\in(1,\infty)$, let $V_1$, $V_2$ be separable real Hilbert spaces, and let
$\varphi\colon H\to \cL(V_1,V_2)$ be a bounded function belonging to the class $\Lip^0(H,\cL(V_1,V_2))$. 
For all $Y\in L^0(\Omega;H)$ satisfying 
$DY\in L^\infty(\Omega;L^q([0,T];L^2(U;H)))$ and all $Z\in \bM^{1,p,q}(V_1)$ it holds that $\varphi(Y)Z\in \bM^{1,p,q}(V_2)$ and
%$\psi'(Y)\in \LB(\bM^{1,p,q}(H))$ with bound
\begin{align*}
  \|\varphi(Y)Z\|_{\bM^{1,p,q}(V_2)}
&\leq
  2^{\frac1p}\Big(2\sup_{x\in H}\|\varphi(x)\|_{\cL(V_1,V_2)} + |\varphi|_{\Lip^0(H,\cL(V_1,V_2))}|Y|_{\bM^{1,\infty,q}(H)}\Big)\\
&\quad\cdot
  \|Z\|_{\bM^{1,p,q}(V_1)}.
\end{align*}
\end{lemma}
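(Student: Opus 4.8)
The plan is to compute the difference operator of the product $\varphi(Y)Z$ explicitly by the chain rule, bound it pointwise, and then carry out the three nested integrations, the decisive point being that the $L^\infty(\Omega)$-control of $DY$ lets one split the estimate into a factor depending only on $Y$ and a factor depending only on $Z$. Throughout write $M:=\sup_{w\in H}\|\varphi(w)\|_{\cL(V_1,V_2)}<\infty$. First I would dispose of the $L^p$-part: the map $\omega\mapsto\varphi(Y(\omega))Z(\omega)$ is strongly measurable (since $\varphi$ is continuous, being Lipschitz, and operator evaluation $\cL(V_1,V_2)\times V_1\to V_2$ is continuous), and $\|\varphi(Y)Z\|_{V_2}\le M\|Z\|_{V_1}$ pointwise, so $\varphi(Y)Z\in L^p(\Omega;V_2)$ with $\|\varphi(Y)Z\|_{L^p(\Omega;V_2)}\le M\|Z\|_{L^p(\Omega;V_1)}$.

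Next I would compute $D(\varphi(Y)Z)$ by applying the chain rule, Lemma~\ref{lem:chain}, with $H\oplus V_1$ in place of $H$, to the pair $(Y,Z)\in L^0(\Omega;H\oplus V_1)$ and the continuous map $h\colon H\oplus V_1\to V_2$, $h(y,z):=\varphi(y)z$. Since $\varepsilon^+_{s,x}$ acts componentwise on representatives, $D_{s,x}(Y,Z)=(D_{s,x}Y,D_{s,x}Z)$, and Lemma~\ref{lem:chain} yields
\[
D_{s,x}\big(\varphi(Y)Z\big)
=\varphi(Y+D_{s,x}Y)(Z+D_{s,x}Z)-\varphi(Y)Z
=\big[\varphi(Y+D_{s,x}Y)-\varphi(Y)\big]Z+\varphi(Y+D_{s,x}Y)\,D_{s,x}Z.
\]
Estimating the first summand by the Lipschitz seminorm of $\varphi$ and the second by the bound $M$ gives the pointwise inequality
\[
\big\|D_{s,x}\big(\varphi(Y)Z\big)\big\|_{V_2}
\le |\varphi|_{\Lip^0(H,\cL(V_1,V_2))}\,\|D_{s,x}Y\|_H\,\|Z\|_{V_1}
+M\,\|D_{s,x}Z\|_{V_1}.
\]

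Then I would take the $L^2(U)$–$L^q([0,T])$–$L^p(\Omega)$ norm and use Minkowski's inequality to split into two contributions. The second term contributes $M\,\|DZ\|_{L^p(\Omega;L^q([0,T];L^2(U;V_1)))}=M\,|Z|_{\bM^{1,p,q}(V_1)}$. In the first term the factor $\|Z(\omega)\|_{V_1}$ does not depend on $(s,x)$, so it pulls out of the inner integrals, leaving $|\varphi|_{\Lip^0}\,\|Z(\omega)\|_{V_1}\,\|DY(\omega)\|_{L^q([0,T];L^2(U;H))}$; the hypothesis $DY\in L^\infty(\Omega;L^q([0,T];L^2(U;H)))$ then gives $\|DY(\omega)\|_{L^q([0,T];L^2(U;H))}\le|Y|_{\bM^{1,\infty,q}(H)}$ for a.e.~$\omega$, and a final $L^p(\Omega)$-integration in $\|Z(\omega)\|_{V_1}$ yields
\[
|\varphi(Y)Z|_{\bM^{1,p,q}(V_2)}
\le |\varphi|_{\Lip^0(H,\cL(V_1,V_2))}\,|Y|_{\bM^{1,\infty,q}(H)}\,\|Z\|_{L^p(\Omega;V_1)}
+M\,|Z|_{\bM^{1,p,q}(V_1)}.
\]

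Finally I would assemble the full norm. Using $\|Z\|_{L^p(\Omega;V_1)},\,|Z|_{\bM^{1,p,q}(V_1)}\le\|Z\|_{\bM^{1,p,q}(V_1)}$, both the $L^p$-part (bounded by $M\|Z\|_{\bM^{1,p,q}(V_1)}$) and the seminorm part (bounded by $(M+|\varphi|_{\Lip^0}|Y|_{\bM^{1,\infty,q}(H)})\|Z\|_{\bM^{1,p,q}(V_1)}$) are dominated by the common constant $(2M+|\varphi|_{\Lip^0}|Y|_{\bM^{1,\infty,q}(H)})\|Z\|_{\bM^{1,p,q}(V_1)}$; the elementary inequality $(a^p+b^p)^{1/p}\le 2^{1/p}\max(a,b)$ then produces the asserted estimate with the prefactor $2^{1/p}$. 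I expect the only genuinely delicate points to be the correct Leibniz identity for the difference operator — note that after regrouping no cross term $\|D_{s,x}Y\|\,\|D_{s,x}Z\|$ survives — and the clean separation of the $(s,x)$-integration from the $\omega$-integration, which is exactly what the $L^\infty$-in-$\Omega$ bound on $DY$ provides; the remaining computations are routine applications of Minkowski's and Hölder's inequalities.
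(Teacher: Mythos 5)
Your proposal is correct and follows essentially the same route as the paper: the same Leibniz-type identity $D_{s,x}(\varphi(Y)Z)=\varphi(Y+D_{s,x}Y)\,D_{s,x}Z+\big(\varphi(Y+D_{s,x}Y)-\varphi(Y)\big)Z$ (the paper obtains it directly from the multiplicativity of $\varepsilon^+_{s,x}$, you via Lemma~\ref{lem:chain} applied to the pair $(Y,Z)$, which is the same mechanism), the same pointwise bound, and the same separation of the $(s,x)$- and $\omega$-integrations using the $L^\infty(\Omega)$-control of $DY$. Your final assembly with the factor $2^{1/p}$ matches the stated constant, so no gaps.
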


\begin{proof}
Take $Y$ and $Z$ as in the statement and observe that
\begin{equation}\label{eq:lem_opnorm_1}
  \|\varphi(Y)Z\|_{L^{p}(\Omega;V_2)}
  \leq
  \sup_{x\in H}\|\varphi(x)\|_{\cL(V_1,V_2)}\|Z\|_{L^p(\Omega;V_1)}.
\end{equation}
Next, due to the definition of the difference operator $D$ in  Subsection~\ref{subsec:Malliavin_prel} and  the identities $D_{t,y}Y = \varepsilon_{t,y}^+Y-Y$ and $\varepsilon_{t,y}^+Y=Y + D_{t,y}Y$ it holds $\bP\otimes\dl t\otimes\nu(\dl y)$--almost everywhere on $\Omega\times [0,T]\times U$ that
$
D_{t,y}(\varphi(Y)Z)
=
  \varphi(\varepsilon_{t,y}^+Y)\,\varepsilon_{t,y}^+Z - \varphi(Y)Z=
  \varphi(Y+D_{t,y}Y)\,D_{t,y}Z + (\varphi(Y + D_{t,y}Y)-\varphi(Y))Z.
$
%\begin{align*}
%  D_{t,y}\big(\varphi(Y)Z\big)
%&=
%  \varphi(\varepsilon_{t,y}Y)\,\varepsilon_{t,y}Z - \varphi(Y)Z\\
%%&=
%%  \varphi(\varepsilon_{t,y}Y)\,\varepsilon_{t,y}Z -\varphi(\varepsilon_{t,y}Y)Z + \varphi(\varepsilon_{t,y}Y)Z - \varphi(Y)Z\\
%%&=
%%  \varphi(\varepsilon_{t,y}Y)\,D_{t,y}Z +\big(\varphi(\varepsilon_{t,y}Y) - \varphi(Y)\big)Z\\
%&=
%  \varphi(Y+D_{t,y}Y)\,D_{t,y}Z + \big(\varphi(Y + D_{t,y}Y)-\varphi(Y)\big)Z.
%%&=
%%  \psi'(F+D_{t,y}F)D_{t,y}G + \int_0^1 \psi''(F+D_{t,y}F)(G,D_{t,y}F)
%%  \diffout\lambda.
%\end{align*}
As a consequence, we obtain 
\begin{equation}\label{eq:lem_opnorm_2}
\begin{aligned}
 \big\|D\big(\varphi(Y)&Z\big)\big\|_{L^p(\Omega;L^q([0,T];L^2(U;V_2)))}
%& \quad
%  \leq
%  \sup_{x\in H}\|\varphi(x)\|_{\cL(V_1,V_2)}\|DZ\|_{L^p(\Omega;L^q([0,T];L^2(U;V_2)))}\\
%&\qquad
%  + |\varphi|_{\Lip(H,\cL(V_1,V_2))}
%  \|DY\|_{L^{\infty}(\Omega;L^q([0,T];L^2(U;H)))}\|Z\|_{L^p(\Omega;V_1)}\\
  \leq
  \Big(
    \sup_{x\in H}\|\varphi(x)\|_{\cL(V_1,V_2)}\\
&\quad
    +    
    |\varphi|_{\Lip^0(H,\cL(V_1,V_2))}
    \|DY\|_{L^{\infty}(\Omega;L^q([0,T];L^2(U;H)))}
  \Big)
  \|Z\|_{\bM^{1,p,q}(V_1)}.
\end{aligned}
\end{equation}
Combining \eqref{eq:lem_opnorm_1} and \eqref{eq:lem_opnorm_2} finishes the proof.
%We conclude that
%\begin{align*}
%& \|\psi'(F)G\|_{\LB(\bM^{1,p,q}(H))}\\
%& \quad
%  \leq
%  \Big(
%    |\psi|_{\cG_{\mathrm{b}}^1(H,V)}^p
%    +
%    \big(
%      |\psi|_{\cG_{\mathrm{b}}^1(H,V)} + |\psi|_{\cG_{\mathrm{b}}^2(H,V)}\|DF\|_{L^{\infty}(\Omega;L^q([0,T];L^2(U;H)))}
%    \big)^p
%  \Big)^{\frac1p}\\
%& \quad
%  \leq
%  2|\psi|_{\cG_{\mathrm{b}}^1(H,V)}
%  + 
%  |\psi|_{\cG_{\mathrm{b}}^2(H,V)}\|DF\|_{L^{\infty}(\Omega;L^q(\X;H))}.\qedhere
%\end{align*}
\end{proof}

\begin{prop}[Local Lipschitz bound]\label{prop:Lip2}
%Consider the setting described in Subsections~\ref{subsec:square} and \ref{subsec:Malliavin_prel}.
Let $p',q'\in(1,2]$, $q\in[2,\infty)$ be such that $\frac1{q}+\frac{1}{q'}=1$, let $V$ be a separable real Hilbert space, and 
$\psi\in \Lip^1(H,V)$. 
Then there exists $C\in[0,\infty)$ such that for all $Y_1,Y_2\in L^2(\Omega;H)$ with $DY_1,DY_2\in L^\infty(\Omega;L^{q}([0,T];L^2(U;H)))$ it holds that
\begin{align*}
&\|
    \psi(Y_1)-\psi(Y_2)
  \|_{\bM^{-1,p',q'}(V)}\\
&\quad
  \leq
  4
  \Big(
    |\psi|_{\Lip^0(H,V)}
    +
    |\psi|_{\Lip^1(H,V)}
    \sum_{i=1}^2|Y_i|_{\bM^{1,\infty,q}(H)}
  \Big)
  \|Y_1-Y_2\|_{\bM^{-1,p',q'}(H)}.
\end{align*}
\end{prop}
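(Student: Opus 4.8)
The plan is to compute $\|\psi(Y_1)-\psi(Y_2)\|_{\bM^{-1,p',q'}(V)}$ via its defining duality as a supremum over test elements $Z\in\bM^{1,p,q}(V)$, where $p\in[2,\infty)$ is the conjugate exponent of $p'$, and then to transfer the derivative of $\psi$ onto $Z$. Since $\psi\in\Lip^1(H,V)\subset\cC^1(H,V)$, the fundamental theorem of calculus gives, pointwise in $\omega$, $\psi(Y_1)-\psi(Y_2)=\int_0^1\psi'(Y_\theta)(Y_1-Y_2)\,\dl\theta$ with $Y_\theta:=Y_2+\theta(Y_1-Y_2)$. Pairing against $Z$ in $L^2(\Omega;V)$ and passing to the adjoint $\psi'(Y_\theta)^*\in\cL(V,H)$, I would use $\langle Z,\psi'(Y_\theta)(Y_1-Y_2)\rangle_V=\langle\psi'(Y_\theta)^*Z,Y_1-Y_2\rangle$, so that after taking $\E$ and interchanging with the $\theta$-integral,
\[
\langle Z,\psi(Y_1)-\psi(Y_2)\rangle_{L^2(\Omega;V)}=\int_0^1\langle\psi'(Y_\theta)^*Z,Y_1-Y_2\rangle_{L^2(\Omega;H)}\,\dl\theta.
\]

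Next I would estimate each integrand by Gelfand-triple duality in $\bM^{1,p,q}(H)\subset L^2(\Omega;H)\subset\bM^{-1,p',q'}(H)$, bounding it by $\|\psi'(Y_\theta)^*Z\|_{\bM^{1,p,q}(H)}\,\|Y_1-Y_2\|_{\bM^{-1,p',q'}(H)}$. The first factor is exactly what Lemma~\ref{lem:op_norm} controls, applied with $\varphi:=(\psi')^*\colon H\to\cL(V,H)$, $V_1=V$, $V_2=H$ and $Y=Y_\theta$. The hypotheses hold because $\sup_{x}\|\psi'(x)^*\|_{\cL(V,H)}=\sup_x\|\psi'(x)\|_{\cL(H,V)}\leq|\psi|_{\Lip^0(H,V)}$ (the operator norm of the derivative is dominated by the Lipschitz constant) and $|\varphi|_{\Lip^0(H,\cL(V,H))}=|\psi|_{\Lip^1(H,V)}$.

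To complete the bound I would use the linearity of the difference operator $D$, which yields $DY_\theta=(1-\theta)DY_2+\theta DY_1$ and hence the uniform-in-$\theta$ estimate $|Y_\theta|_{\bM^{1,\infty,q}(H)}\leq\sum_{i=1}^2|Y_i|_{\bM^{1,\infty,q}(H)}$ (in particular $DY_\theta\in L^\infty(\Omega;L^q([0,T];L^2(U;H)))$). Feeding this into Lemma~\ref{lem:op_norm} makes the bound on $\|\psi'(Y_\theta)^*Z\|_{\bM^{1,p,q}(H)}$ independent of $\theta$, so the $\theta$-integral contributes only a factor $1$; dividing by $\|Z\|_{\bM^{1,p,q}(V)}$ and taking the supremum over $Z$ gives the assertion. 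The resulting constants are $2^{1/p}\cdot2$ in front of $|\psi|_{\Lip^0(H,V)}$ and $2^{1/p}$ in front of $|\psi|_{\Lip^1(H,V)}\sum_i|Y_i|_{\bM^{1,\infty,q}(H)}$, both at most $4$ since $p\geq2$ forces $2^{1/p}\leq\sqrt2$.

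I expect the substance of the argument to be routine once Lemma~\ref{lem:op_norm} is in place; the main point requiring care is verifying that all the lemma's hypotheses hold along the whole segment $(Y_\theta)_{\theta\in[0,1]}$—in particular the boundedness and Lipschitz continuity of the adjoint-valued map $(\psi')^*$ inherited from $\psi\in\Lip^1(H,V)$, and the uniform-in-$\theta$ control of $|Y_\theta|_{\bM^{1,\infty,q}(H)}$—together with the measurability and integrability needed to justify the pointwise fundamental-theorem-of-calculus representation and the interchange of $\E$ with the $\theta$-integral.
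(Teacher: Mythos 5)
Your proposal is correct and follows essentially the same route as the paper's proof: the fundamental theorem of calculus along the segment $Y_2+\theta(Y_1-Y_2)$, duality in the Gelfand triple to move the estimate onto $\|[\psi'(Y_\theta)]^*Z\|_{\bM^{1,p,q}(H)}$, and an application of Lemma~\ref{lem:op_norm} with $\varphi=(\psi')^*$, $V_1=V$, $V_2=H$. Your explicit use of the linearity of $D$ to control $|Y_\theta|_{\bM^{1,\infty,q}(H)}$ uniformly in $\theta$ is a detail the paper leaves implicit, but it is the same argument.
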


\begin{proof}
Let $p=p'/(p'-1)$. Due to the fundamental theorem of calculus and the structure of the Gelfand triple $\bM^{1,p,q}(V)\subset L^2(\Omega;V)\subset \bM^{-1,p',q'}(V)$, it holds that
\begin{align*}
  &\|\psi(Y_1)-\psi(Y_2)\|_{\bM^{-1,p',q'}(V)}
  =\Big\|\int_0^1\psi'\big(Y_2+\lambda(Y_1-Y_2)\big)(Y_1-Y_2)\,\dl\lambda\Big\|_{\bM^{-1,p',q'}(V)}\\
  &=
  \sup_{\substack{Z\in \bM^{1,p,q}(V)\\ \|Z\|_{\bM^{1,p,q}(V)}=1}}
    \Big\langle
      Z, \int_0^1\psi'\big(Y_2+\lambda(Y_1-Y_2)\big)(Y_1-Y_2)\,\dl\lambda
    \Big\rangle_{L^2(\Omega;V)}\\
%  &=
%  \sup_{\substack{Z\in \bM^{1,p,q}(V)\\ \|Z\|_{\bM^{1,p,q}(V)}=1}}
%  \bE\int_0^1\Big\langle\big[\psi'\big(Y_2+\lambda(Y_1-Y_2)\big)\big]^*Z,Y_1-Y_2\Big\rangle\,\dl\lambda\\
  &\leq
  \sup_{\substack{Z\in \bM^{1,p,q}(V)\\ \|Z\|_{\bM^{1,p,q}(V)}=1}}
  \int_0^1\big\|\big[\psi'\big(Y_2+\lambda(Y_1-Y_2)\big)\big]^*Z\big\|_{\bM^{1,p,q}(H)}\,\dl\lambda\,\|Y_1-Y_2\|_{\bM^{-1,p',q'}(H)},
\end{align*}
where for $x\in H$ we denote by $[\psi'(x)]^*\in\cL(V,H)$ the Hilbert space adjoint of $\psi'(x)\in\cL(H,V)$. 
Note that the mapping $\varphi\colon H\to \cL(V,H)$ defined by $\varphi(x):=[\psi'(x)]^*$, $x\in H$, is bounded and belongs to the class $\Lip^0(H,\cL(V,H))$. We have $\sup_{x\in H}\|\varphi(x)\|_{\cL(V,H)}\leq |\psi|_{\Lip^0(H,V)}$ and $|\varphi|_{\Lip^0(H,\cL(V,H))}\leq|\psi|_{\Lip^1(H,V)}$.
An application of Lemma~\ref{lem:op_norm} with $V_1=V$, $V_2=H$ thus yields the assertion. 
\end{proof}

\begin{lemma}\label{lem:op_norm2}
Let $p',q'\in(1,2]$, $F\in L^2(\Omega;H)$ and $S\in\LB(H)$. It holds that 
$\|SF\|_{\bM^{-1,p',q'}(H)}\leq \|S\|_{\LB(H)}\|F\|_{\bM^{-1,p',q'}(H)}$.
\end{lemma}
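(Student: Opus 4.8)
The plan is to prove the dual statement by duality in the Gelfand triple $\bM^{1,p,q}(H)\subset L^2(\Omega;H)\subset\bM^{-1,p',q'}(H)$, where $p,q\in[2,\infty)$ are the conjugate exponents satisfying $\tfrac1p+\tfrac1{p'}=\tfrac1q+\tfrac1{q'}=1$. First I would write the negative norm via the supremum representation
\begin{align*}
\|SF\|_{\bM^{-1,p',q'}(H)}
=\sup_{Z\in\bM^{1,p,q}(H)\setminus\{0\}}
\frac{\langle Z,SF\rangle_{L^2(\Omega;H)}}{\|Z\|_{\bM^{1,p,q}(H)}},
\end{align*}
which is justified since $F\in L^2(\Omega;H)\subset\bM^{-1,p',q'}(H)$ and likewise $SF\in L^2(\Omega;H)$ because $S\in\cL(H)$ is bounded.

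Next I would move $S$ onto the test element via its Hilbert-space adjoint $S^*\in\cL(H)$, using self-duality of $L^2(\Omega;H)$, to obtain $\langle Z,SF\rangle_{L^2(\Omega;H)}=\langle S^*Z,F\rangle_{L^2(\Omega;H)}$. The key structural fact I need is that $S^*Z$ again lies in $\bM^{1,p,q}(H)$ with norm controlled by $\|S\|_{\cL(H)}\|Z\|_{\bM^{1,p,q}(H)}$. Since $S^*$ is a fixed deterministic bounded operator, it commutes with the difference operator $D$: applying a deterministic linear map does not interact with the addition of a Dirac mass in the PRM argument, so $D_{t,x}(S^*Z)=S^*(D_{t,x}Z)$ holds $\bP\otimes\dl t\otimes\nu(\dl x)$-almost everywhere. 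From this one gets both $\|S^*Z\|_{L^p(\Omega;H)}\leq\|S\|_{\cL(H)}\|Z\|_{L^p(\Omega;H)}$ and $|S^*Z|_{\bM^{1,p,q}(H)}\leq\|S\|_{\cL(H)}|Z|_{\bM^{1,p,q}(H)}$, hence $\|S^*Z\|_{\bM^{1,p,q}(H)}\leq\|S\|_{\cL(H)}\|Z\|_{\bM^{1,p,q}(H)}$.

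Combining these gives
\begin{align*}
\langle Z,SF\rangle_{L^2(\Omega;H)}
=\langle S^*Z,F\rangle_{L^2(\Omega;H)}
\leq\|S^*Z\|_{\bM^{1,p,q}(H)}\,\|F\|_{\bM^{-1,p',q'}(H)}
\leq\|S\|_{\cL(H)}\|Z\|_{\bM^{1,p,q}(H)}\,\|F\|_{\bM^{-1,p',q'}(H)},
\end{align*}
and dividing by $\|Z\|_{\bM^{1,p,q}(H)}$ and taking the supremum over $Z$ yields the claim.

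The main obstacle is the commutation identity $D(S^*Z)=S^*(DZ)$ for the deterministic bounded operator $S^*$. This should follow directly from Lemma~\ref{lem:chain} applied to the (linear, hence measurable) map $h=S^*\colon H\to H$: by that lemma $D h(Z)=h(Z+DZ)-h(Z)=S^*(Z+DZ)-S^*Z=S^*(DZ)$, using linearity of $S^*$. Thus no genuinely new estimate is required; the only care needed is to verify that $S^*Z$ indeed belongs to the integrability classes defining $\bM^{1,p,q}(H)$, which is immediate from boundedness of $S^*$ together with the contractivity of $S^*$ acting inside the $L^2(U;H)$ and $L^q([0,T];\cdot)$ norms pointwise.
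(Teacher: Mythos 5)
Your proof is correct and follows essentially the same route as the paper: dualize in the Gelfand triple, pass $S$ to its adjoint $S^*$ on the test element $Z$, and use that $S^*$ maps $\bM^{1,p,q}(H)$ into itself with norm at most $\|S\|_{\cL(H)}$. In fact you make explicit (via Lemma~\ref{lem:chain} and linearity, giving $D(S^*Z)=S^*DZ$) the one step the paper's own proof leaves implicit when it divides $S^*Z$ by $\|S^*\|_{\cL(H)}$ and treats it as an admissible unit-norm test element.
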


\begin{proof}
Let $p,q\in[2,\infty)$ satisfy $\tfrac1p+\tfrac1{p'} = \tfrac1q + \tfrac1{q'} =1$. For notational convenience let $B=\bM^{1,p,q}(H)$ and hence $B^*=\bM^{-1,p',q'}(H)$. Assuming without loss of generality that $\|S\|_{\cL(H)}>0$, it holds that
\begin{align*}
  \|SF\|_{B^*}
&=
  \sup_{\|Z\|_{B}=1}
  \langle SF,Z \rangle_{L^2(\Omega;H)}
  =
  \|S^*\|_{\LB(H)}
  \sup_{\|Z\|_{B}=1}
    \Big\langle F,\frac{S^*Z}{\|S^*\|_{\LB(H)}} \Big\rangle_{L^2(\Omega;H)}\\
&\leq    
  \|S^*\|_{\LB(H)}
  \sup_{\|Z\|_{B}=1}
    \langle F,Z \rangle_{L^2(\Omega;H)}
  =
  \|S\|_{\LB(H)}
  \|F\|_{B^*}\qedhere
\end{align*}  

\end{proof}

\section{Weak approximation for a class of semilinear SPDE}
\label{sec:weak}

\subsection{The main result and an application}
\label{subsec:mainresult}

Here we describe the numerical space-time discretization scheme for Eq.~\eqref{eq:SPDE_additive} and formulate our main result on weak convergence in Theorem~\ref{thm:weak2}. For the sake of comparability, we also state a corresponding strong convergence result in Proposition~\ref{prop:strong_error_semilin}. An application of Theorem~\ref{thm:weak2} to covariance convergence in presented in Corollary~\ref{cor:cov}, see \cite{LangKirchnerLarsson2017} for related results.

\begin{assumption}[Discretization]\label{ass:discretization}
For the spatial discretization we use a family
$(V_h)_{h\in(0,1)}$  of finite dimensional subspaces of $H$ and linear operators $A_h\colon V_h\to V_h$ that serve as discretizations of $A$. By $P_h\colon H\to V_h$ we denote the orthogonal projectors w.r.t.\ the inner product in $H$. 
For the discretization in time we use a linearly implicit Euler scheme with uniform grid  $t_m=km$, $m\in\{0,\ldots,M\}$, where  $k\in(0,1)$ is the stepsize and $M=M_k\in\bN$ is determined by $t_M\leq T< t_M+k$. 
The operators 
$S_{h,k}:=(\id_{V_h}+kA_h)^{-1}P_h$
thus serve as discretizations of $S(k)$,
and 
$E_{h,k}^m:=S_{h,k}^m-S(t_m)$
are the corresponding error operators. We assume that there are constants 
$D_\rho,\,D_{\rho,\sigma}\in [0,\infty)$ (independent of $h$, $k$, $m$) such that, 
\begin{align}
&\|A_h^{\frac{\rho}2}S_{h,k}^m \|_{\LB(H)} + \|S_{h,k}^m A^{\frac{\min(\rho,1)}2} \|_{\LB(H)}
\leq
  D_\rho\,t_m^{-\frac{\rho}2},
  \quad
  \rho\geq0,\label{eq:estAh1}\\
&\| E_{h,k}^m A^{\frac{\rho}2}\|_{\LB(H)}
\leq
  D_{\rho,\sigma} \,t_m^{-\frac{\rho+\sigma}2}
  \big(
    h^\sigma + k^{\frac{\sigma}2}
  \big),
  \quad
  \sigma\in[0,2],\ \rho\in [-\sigma,\min(1,2-\sigma)],\label{eq:estAh2}
 \end{align}
for all $h,k\in(0,1)$ and $m\in\{1,\ldots,M\}$.
\end{assumption}

\begin{example}\label{ex:Vh}
In the situation of Example~\ref{ex:A}, the spaces $V_h$ can be chosen as standard finite element spaces consisting of continuous, piecewise linear functions w.r.t.\ regular triangulations of $\cO$, with maximal mesh size bounded by $h$. See, e.g., \cite[Section 5]{AnderssonKruseLarsson} for a proof of the estimates \eqref{eq:estAh1}, \eqref{eq:estAh2} in this case. 
\end{example}

For $h,k\in(0,1)$ and $M=M_k\in\N$  the approximation $(X^m_{h,k})_{m\in\{0,\ldots,M\}}$ of the mild solution $(X(t))_{t\in[0,T]}$ to Eq.~\eqref{eq:SPDE_additive}
is defined recursively by $X^0_{h,k}=P_hX_0$ and
\begin{align}\label{eq:mild_sol_discr_semilin}
X^m_{h,k}
= 
S^m_{h,k}X_0+k\sum_{j=0}^{m-1}S^{m-j}_{h,k}F(X^j_{h,k}) 
+\sum_{j=0}^{m-1}S^{m-j}_{h,k}(L(t_{j+1})-L(t_j)),
\end{align}
$m\in\{1,\dots,M\}$.
By $(\tilde X_{h,k}(t))_{t\in[0,T]}$ we denote the piecewise constant interpolation of $(X_{h,k}^m)_{m\in\{0,\ldots,M\}}$ which is defined as 
\begin{align}\label{eq:mild_sol_discr_interpol_semilin}
\tilde X_{h,k}(t)=\sum_{m=0}^{M-1}\mathbf 1_{[t_{m},t_{m+1})}(t)X_{h,k}^m+\mathbf 1_{[t_M,T]}(t)X^M_{h,k}.
\end{align}

The following strong convergence result can be proven analogously to the Gaussian case, cf.~\cite[Theorem 4.2]{AnderssonKovacsLarsson}.

\begin{prop}[Strong convergence]\label{prop:strong_error_semilin}
Let Assumption~\ref{as:SPDE} hold, let $(X(t))_{t\in[0,T]}$ be the mild solution to Eq.~\eqref{eq:SPDE_additive} and $(\tilde X_{h,k}(t))_{t\in[0,T]}$ be its discretization given by \eqref{eq:mild_sol_discr_semilin}, \eqref{eq:mild_sol_discr_interpol_semilin}.
Then, for every $\gamma\in[0,\beta)$ there exists a constant $C\in [0,\infty)$, 
which does not depend on $h,k$, such that
\begin{align*}
  \sup_{t\in[0,T]}
  \|X(t)-\tilde X_{h,k}(t)\|_{L^2(\Omega;H)}
  \leq
  C(h^\gamma+k^{\frac\gamma2}),
  \quad
  h,k\in(0,1).
\end{align*}
\end{prop}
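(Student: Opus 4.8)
The plan is to follow the classical strong-error strategy for semilinear equations, as in \cite[Theorem~4.2]{AnderssonKovacsLarsson}, with the Gaussian stochastic convolution replaced by its Lévy counterpart and the Itô isometry for $\tilde N$ from Subsection~\ref{subsec:square} taking over the role of the Gaussian isometry. First I would reduce to an estimate at the grid points. Since $\tilde X_{h,k}(t)=X^m_{h,k}$ for $t\in[t_m,t_{m+1})$, I split
\[
X(t)-\tilde X_{h,k}(t)=\big(X(t)-X(t_m)\big)+\big(X(t_m)-X^m_{h,k}\big),
\]
so that it suffices to control the temporal regularity of $X$ and the grid-point error $e_m:=X(t_m)-X^m_{h,k}$ in $L^2(\Omega;H)$. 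For the first summand I would establish Hölder regularity of order $\gamma/2$, i.e.\ $\|X(t_2)-X(t_1)\|_{L^2(\Omega;H)}\le C|t_2-t_1|^{\gamma/2}$: representing the increment via \eqref{eq:SPDE_mild} and using the Itô isometry for the noise term, the binding constraint is convergence of $\int_0^{t_1}(t_1-s)^{-(\gamma+1-\beta)}\,\dl s$, which holds precisely because $\gamma<\beta$; here $X_0\in\dot H^{2\beta}\subset\dot H^{2\gamma}$ and $|\nu|_2<\infty$ enter, and on $[t_m,t_{m+1})$ this contributes at most $Ck^{\gamma/2}$.

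For the grid-point error I would use the discrete mild representation \eqref{eq:mild_sol_discr_semilin} together with the identity $S^{m-j}_{h,k}(L(t_{j+1})-L(t_j))=\int_{t_j}^{t_{j+1}}S^{m-j}_{h,k}\,\dl L(s)$, and decompose $e_m$ into an initial-value error $(S(t_m)-S^m_{h,k})X_0$, a deterministic convolution error, and a stochastic convolution error $\int_0^{t_m}\big(S(t_m-s)-\bar S_{h,k}(s)\big)\,\dl L(s)$, where $\bar S_{h,k}(s):=S^{m-j}_{h,k}$ for $s\in[t_j,t_{j+1})$. The initial-value term I would bound by factoring $(S(t_m)-S^m_{h,k})A^{-\gamma/2}\cdot A^{\gamma/2}X_0$ and applying \eqref{eq:estAh2} with $\rho=-\gamma$, $\sigma=\gamma$; the choice $\rho=-\sigma$ cancels the singular weight $t_m^{-(\rho+\sigma)/2}$ and yields the uniform bound $C(h^\gamma+k^{\gamma/2})\|X_0\|_{\dot H^\gamma}$, which is admissible since $-\gamma\in[-\gamma,\min(1,2-\gamma)]$ and $\gamma<2\beta$. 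For the stochastic convolution error I would apply the Itô isometry, factor through $A^{(1-\beta)/2}$ to pass from $U=\dot H^{\beta-1}$ to $H$ (so that $\int_U\|x\|_U^2\,\nu(\dl x)=|\nu|_2^2$), and split $S(t_m-s)-S^{m-j}_{h,k}=\big(S(t_m-s)-S(t_{m-j})\big)-E^{m-j}_{h,k}$. The semigroup part is handled by \eqref{eq:smoothing}, \eqref{eq:continuity} and the error-operator part by \eqref{eq:estAh2} with $\rho=1-\beta$, $\sigma=\gamma$; both resulting time integrals converge under $\gamma<\beta$, giving $C(h^\gamma+k^{\gamma/2})$.

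The deterministic convolution error I would split into a consistency part and a stability part. For the consistency part $\sum_{j}\int_{t_j}^{t_{j+1}}\big[S(t_m-s)F(X(s))-S^{m-j}_{h,k}F(X(t_j))\big]\,\dl s$ I would further write the integrand as $S(t_m-s)\big[F(X(s))-F(X(t_j))\big]+\big[S(t_m-s)-S^{m-j}_{h,k}\big]F(X(t_j))$; the first contribution combines $\Lip^0(H,\dot H^{\beta-1})$-continuity of $F$ with the $\gamma/2$ temporal regularity of $X$, the second reuses the operator estimate from the stochastic term, while the linear growth of $F$ and \eqref{eq:XL2} bound $\|F(X(t_j))\|_{L^2(\Omega;\dot H^{\beta-1})}$; both give $C(h^\gamma+k^{\gamma/2})$. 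The stability part $k\sum_{j=0}^{m-1}S^{m-j}_{h,k}\big(F(X(t_j))-F(X^j_{h,k})\big)$ is estimated by $Ck\sum_{j=0}^{m-1}t_{m-j}^{-(1-\beta)/2}\|e_j\|_{L^2(\Omega;H)}$ via \eqref{eq:estAh1} and the Lipschitz bound on $F$. Collecting everything yields
\[
\|e_m\|_{L^2(\Omega;H)}\le C\big(h^\gamma+k^{\gamma/2}\big)+Ck\sum_{j=0}^{m-1}t_{m-j}^{-\frac{1-\beta}2}\|e_j\|_{L^2(\Omega;H)},
\]
and a discrete Gronwall inequality with the weakly singular kernel $t_{m-j}^{-(1-\beta)/2}$ (summable since $(1-\beta)/2<1$) closes the estimate uniformly in $m$. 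I expect the main obstacle to be the consistency error: one must juggle the singular time weights in \eqref{eq:estAh2} against the admissible ranges of $(\rho,\sigma)$ and the $\gamma/2$ temporal regularity of $X$ so that every time integral stays convergent, with the borderline condition $\gamma<\beta$ being exactly what guarantees this throughout.
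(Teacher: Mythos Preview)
Your proposal is correct and follows exactly the approach indicated by the paper: the paper does not spell out a proof but merely states that it ``can be proven analogously to the Gaussian case, cf.~\cite[Theorem 4.2]{AnderssonKovacsLarsson}'', and your sketch is precisely that adaptation, with the It\^o isometry for $\tilde N$ replacing the Gaussian isometry and the discrete Gronwall Lemma~\ref{lem:discrete_gronwall} closing the argument.
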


For the weak convergence we consider path dependent functionals as specified by the next assumption. In the related work \cite{AnderssonKovacsLarsson} functionals of the form
$
  f(x)
  =
  \prod_{i=1}^n
    \varphi_i\big(\int_{[0,T]}x(t)\,\mu_i(\diffin t)\big)
$,
with $\varphi_1,\dots,\varphi_n$ being twice differentiable with polynomially growing derivatives of some fixed but arbitrary degree, and 
$\mu_1,\dots,\mu_n$ being finite Borel measures on $[0,T]$, were considered for equations with Gaussian noise. Here we generalize by removing the product structure, but we only allow for quadratically growing test functions. The reason for the latter restrition is that the solution to our equation has in general only finite moments up to order two while solutions to equations with Gaussian noise have all moments finite.

\begin{assumption}[Test function $f$]\label{as:Phi}
Let $n\in\bN$ and $\varphi\colon\bigoplus_{i=1}^n\!H\to\bR$ be Fréchet differentiable with globally Lipschitz continuous derivative mapping $\varphi'\colon \bigoplus_{i=1}^n\!H\to\cL\big(\bigoplus_{i=1}^n\!H,\bR\big)$.
Let $\mu_1,\ldots,\mu_n$ be finite Borel-measures on $[0,T]$. The functional $f\colon L^1([0,T],\sum_{i=1}^n\mu_i;H)\to\bR$ is given by
\begin{align*}
f(x):=\varphi\Big(\int_{[0,T]}x(t)\,\mu_1(\dl t),\ldots,\int_{[0,T]}x(t)\,\mu_n(\dl t)\Big).
\end{align*}
\end{assumption}

Observe that $X,\,\tilde X_{h,k}\in L^2(\Omega;L^1([0,T],\sum_{i=1}^n\mu_i;H))$ due to \eqref{eq:XL2} and, e.g., the estimate \eqref{eq:lemstab1} below. In particular, the random variables $f(X)$, $f(\tilde X_{h,k})$ are defined and integrable.

We next state our main result on weak convergence. The proof is postponed to Subsections~\ref{subsec:weak1}--\ref{subsec:weak4}. Note that the obtained weak rate of convergence is twice the strong rate from Propostion~\ref{prop:strong_error_semilin}.

\begin{theorem}[Weak convergence]\label{thm:weak2}
Let Assumption~\ref{as:SPDE}, \ref{ass:discretization} and \ref{as:Phi} hold. Let $X=(X(t))_{t\in[0,T]}$ be the mild solution to Eq.~\eqref{eq:SPDE_additive} and $\tilde X_{h,k}=(\tilde X_{h,k}(t))_{t\in[0,T]}$ be its discretization given by \eqref{eq:mild_sol_discr_semilin}, \eqref{eq:mild_sol_discr_interpol_semilin}.
Then, for every $\gamma\in[0,\beta)$ there exists a constant $C\in [0,\infty)$, 
which does not depend on $h,k$, such that the weak error estimate
\eqref{eq:weak_conv} holds. 
\end{theorem}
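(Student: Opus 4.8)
The plan is to reduce the weak error to a negative-norm estimate on the error process via the duality structure of the Gelfand triple, and then to combine the time-regularity of $X$ (Proposition~\ref{thm:reg3}), the corresponding regularity of the approximation $\tilde X_{h,k}$, and the approximation properties of the scheme. Concretely, I would first use Assumption~\ref{as:Phi} together with the fundamental theorem of calculus to write
\begin{align*}
  f(\tilde X_{h,k}) - f(X)
  =
  \int_0^1 \varphi'\!\Big(\theta \textstyle\int x_{h,k}\,\mu + (1-\theta)\int x\,\mu\Big)\Big(\textstyle\int (\tilde X_{h,k}-X)\,\mu\Big)\,\dl\theta,
\end{align*}
where I abbreviate the $n$-tuples of time-averaged integrals. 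Taking expectations, the key is that the random linear functional $Z := \int_0^1 \varphi'(\cdots)\,\dl\theta$ (paired appropriately) should lie in the space $\bM^{1,p,q}(H)$ for suitable $p,q\in[2,\infty)$, so that
\begin{align*}
  |\bE[f(\tilde X_{h,k})-f(X)]|
  \leq
  \|Z\|_{\bM^{1,p,q}}\,\Big\|\textstyle\int_{[0,T]}(\tilde X_{h,k}(t)-X(t))\,\mu_i(\dl t)\Big\|_{\bM^{-1,p',q'}(H)}.
\end{align*}
This is exactly where the Local Lipschitz bound of Proposition~\ref{prop:Lip2} and the operator-norm Lemma~\ref{lem:op_norm} enter: they guarantee that composing the Lipschitz map $\varphi'$ with processes having bounded $\bM^{1,\infty,q}$-seminorms produces an element whose $\bM^{1,p,q}$-norm is controlled, provided I have established $\sup_t|X(t)|_{\bM^{1,\infty,q}}<\infty$ and the analogous bound for $\tilde X_{h,k}$.

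Second, I would establish the Malliavin regularity of the approximation process $\tilde X_{h,k}$ uniformly in $h,k$. This parallels Proposition~\ref{thm:reg2}: applying the difference operator $D$ to the recursion \eqref{eq:mild_sol_discr_semilin} and using the commutation relation for stochastic integrals (Proposition~\ref{prop:comm_space-time2}), the discrete smoothing estimate \eqref{eq:estAh1}, and a discrete Gronwall argument, one should obtain $\sup_{h,k}\sup_t|\tilde X_{h,k}(t)|_{\bM^{1,\infty,q}(H)}<\infty$ for $q$ in the admissible range. This is the discrete analogue of the bound \eqref{eq:regI3} and feeds the regularity hypothesis needed for Proposition~\ref{prop:Lip2}.

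Third, and this is where the factor-of-two rate emerges, I would estimate the negative-norm error $\|\tilde X_{h,k}(t)-X(t)\|_{\bM^{-1,p',q'}(H)}$ by $C(h^{2\gamma}+k^\gamma)$. Representing both $X(t)$ and $\tilde X_{h,k}(t)$ through their mild/discrete formulas, the error splits into initial-value, deterministic drift, and stochastic-integral contributions. The deterministic and initial terms are handled in $L^2(\Omega;H)\hookrightarrow\bM^{-1,p',q'}$ using the error estimates \eqref{eq:estAh2} with $\sigma=2\gamma$; crucially, measuring the error in the \emph{weak} (negative) norm lets one place the full regularity $2\gamma$ on the smoothing operator rather than splitting it, yielding $h^{2\gamma}+k^\gamma$ rather than $h^\gamma+k^{\gamma/2}$. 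For the stochastic-integral term the negative norm inequality (Proposition~\ref{prop:negnorm2}) converts the $\bM^{-1,p',q'}$-norm of the It\^o integral into an $L^{p'}(\Omega;L^{q'}([0,T];L^2(U;H)))$-norm of the integrand, after which \eqref{eq:estAh2} and the integrability computation \eqref{eq:par_calc} (with $2\gamma$ in place of $\gamma$) close the estimate. The drift term also requires controlling $\|F(X(s))-F(\tilde X_{h,k}(s))\|$, which I would handle with Proposition~\ref{prop:Lip2} applied to $\psi=F\in\Lip^1$ together with the strong/weak error already bootstrapped for the increments.

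The main obstacle I anticipate is the self-referential coupling in the negative-norm error estimate: the drift contribution involves $\|F(X(s))-F(\tilde X_{h,k}(s))\|_{\bM^{-1,p',q'}}$, which by Proposition~\ref{prop:Lip2} is itself bounded by $\|X(s)-\tilde X_{h,k}(s)\|_{\bM^{-1,p',q'}}$, so the desired bound appears on both sides. Resolving this requires a Gronwall-type argument in the negative norm, and care that the constants from Proposition~\ref{prop:Lip2} remain uniform in $h,k$ — which is precisely why the uniform $\bM^{1,\infty,q}$-bounds on both $X$ and $\tilde X_{h,k}$ from the first two steps are indispensable. A secondary technical point is verifying the admissible exponent ranges: one must check that $\gamma<\beta$ together with $\sigma=2\gamma\leq 2$ and the parameter constraints in \eqref{eq:estAh2} and in the definition of $\bM^{1,p,q}$ are simultaneously satisfiable, so that all the cited propositions genuinely apply.
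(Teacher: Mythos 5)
Your proposal is correct and follows essentially the same route as the paper: the duality reduction in the Gelfand triple via the fundamental theorem of calculus (Subsection~\ref{subsec:weak4} and Lemma~\ref{lem:estPhihk}), the uniform Malliavin bound on the discrete solution (Proposition~\ref{lem:stab}), and the negative-norm error estimate with the splitting into initial/drift/stochastic terms, Proposition~\ref{prop:negnorm2}, Proposition~\ref{prop:Lip2}, the H\"older bound of Proposition~\ref{thm:reg3}, and a discrete Gronwall argument (Lemma~\ref{lem:dual_conv}). The only minor inaccuracy is that Proposition~\ref{prop:Lip2} and Lemma~\ref{lem:op_norm} enter in the drift term of the negative-norm estimate rather than in bounding the dual test random variable, which the paper handles directly by the chain rule and the Lipschitz continuity of $\varphi'$.
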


\begin{corollary}[Covariance convergence]\label{cor:cov}
Consider the setting of Theorem~\ref{thm:weak2}. \linebreak For all $\gamma\in[0,\beta)$, $t_1,t_2\in(0,T]$ and $\phi_1,\phi_2\in H$ there exists a constant $C\in[0,\infty)$, which does not depend on $h,k$, such that
\begin{align*}
\big|
    \mathrm{Cov}\big(\big\langle X(t_1),\phi_1\big\rangle,\big\langle X(t_2),\phi_2\big\rangle\big)
   -
    \mathrm{Cov}\big(\big\langle \tilde X_{h,k}(t_1),\phi_1&\big\rangle,\big\langle \tilde X_{h,k}(t_2),\phi_2\big\rangle\big)
  \big|\\
  &\leq
  C\,(h^{2\gamma}+k^\gamma),\quad h,k\in(0,1).
\end{align*}
\end{corollary}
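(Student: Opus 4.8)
The plan is to reduce the covariance estimate to the already-established weak convergence result in Theorem~\ref{thm:weak2}. The key observation is that a covariance can be written as an expectation of a suitable bilinear functional. Concretely, recall the bilinearity identity
\begin{equation*}
\mathrm{Cov}(U,V)=\tfrac14\big(\bE[(U+V)^2]-(\bE[U+V])^2\big)-\tfrac14\big(\bE[(U-V)^2]-(\bE[U-V])^2\big),
\end{equation*}
but it is cleaner to absorb everything into a single functional. The plan is therefore to write, for fixed $t_1,t_2\in(0,T]$ and $\phi_1,\phi_2\in H$,
\begin{equation*}
\mathrm{Cov}\big(\langle X(t_1),\phi_1\rangle,\langle X(t_2),\phi_2\rangle\big)
=\bE\big[g(X)\big]-\bE\big[\langle X(t_1),\phi_1\rangle\big]\,\bE\big[\langle X(t_2),\phi_2\rangle\big],
\end{equation*}
where $g(x):=\langle x(t_1),\phi_1\rangle\langle x(t_2),\phi_2\rangle$. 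The first and second terms will each be handled by an application of Theorem~\ref{thm:weak2}, and an analogous decomposition is used for $\tilde X_{h,k}$.

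First I would verify that both $g$ and the two linear functionals fit the framework of Assumption~\ref{as:Phi}. For the linear parts, take $n=1$, let $\mu_1=\delta_{t_i}$ be the Dirac measure at $t_i$ (a finite Borel measure on $[0,T]$), and let $\varphi=\langle\cdot,\phi_i\rangle$, which is linear hence Fréchet differentiable with constant (hence Lipschitz) derivative. For the product term $g$, take $n=2$, $\mu_1=\delta_{t_1}$, $\mu_2=\delta_{t_2}$, and $\varphi\colon H\oplus H\to\bR$ given by $\varphi(y_1,y_2):=\langle y_1,\phi_1\rangle\langle y_2,\phi_2\rangle$. The Fréchet derivative of this $\varphi$ is $\varphi'(y_1,y_2)(z_1,z_2)=\langle z_1,\phi_1\rangle\langle y_2,\phi_2\rangle+\langle y_1,\phi_1\rangle\langle z_2,\phi_2\rangle$, which is affine in $(y_1,y_2)$ and therefore \emph{not} globally Lipschitz on all of $H\oplus H$. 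This is the point requiring care: strictly speaking $\varphi'$ grows linearly, so Assumption~\ref{as:Phi} is not literally satisfied by the product functional.

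The hard part will therefore be accommodating the quadratic test function $g$, whose derivative is not globally Lipschitz. I see two routes. The cleaner route is to inspect the proof of Theorem~\ref{thm:weak2} (carried out in Subsections~\ref{subsec:weak1}--\ref{subsec:weak4}) and observe that the global Lipschitz assumption on $\varphi'$ is only used to control $\varphi'$ along the line segment between $X$ and $\tilde X_{h,k}$; since $X,\tilde X_{h,k}\in L^2(\Omega;L^1)$ with uniformly bounded second moments (by \eqref{eq:XL2} and the stability estimate \eqref{eq:lemstab1}), the relevant arguments of $\varphi'$ stay in an $L^2(\Omega)$-ball, and a \emph{local} Lipschitz bound for $\varphi'$ combined with the Cauchy--Schwarz inequality suffices. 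Thus the weak estimate \eqref{eq:weak_conv} extends to $g$, with the constant $C$ now depending on $\sup_{t}\|X(t)\|_{L^2(\Omega;H)}$, $\sup_{t,h,k}\|\tilde X_{h,k}(t)\|_{L^2(\Omega;H)}$, and $\|\phi_1\|,\|\phi_2\|$.

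Assuming this extension, the remainder is routine. Applying the (extended) weak estimate to $g$ gives $|\bE[g(X)]-\bE[g(\tilde X_{h,k})]|\leq C(h^{2\gamma}+k^\gamma)$, and applying Theorem~\ref{thm:weak2} to each linear functional gives $|\bE\langle X(t_i),\phi_i\rangle-\bE\langle\tilde X_{h,k}(t_i),\phi_i\rangle|\leq C(h^{2\gamma}+k^\gamma)$. It remains to estimate the difference of the product terms $\bE[\langle X(t_1),\phi_1\rangle]\,\bE[\langle X(t_2),\phi_2\rangle]-\bE[\langle\tilde X_{h,k}(t_1),\phi_1\rangle]\,\bE[\langle\tilde X_{h,k}(t_2),\phi_2\rangle]$ by adding and subtracting a mixed term and using the uniform $L^2(\Omega)$-bounds on the factors together with the two linear weak estimates. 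Collecting the three contributions via the triangle inequality and subsuming all constants into a single $C$ yields the claimed bound $C(h^{2\gamma}+k^\gamma)$, completing the proof.
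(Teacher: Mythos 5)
Your decomposition is exactly the paper's: write the covariance difference in terms of the product functional $f_1(x)=\langle x(t_1),\phi_1\rangle\langle x(t_2),\phi_2\rangle$ and the two linear functionals $f_2,f_3$, apply the weak convergence theorem to each, and control the cross terms with the uniform $L^2(\Omega)$-bounds \eqref{eq:XL2} and \eqref{eq:lemstab1}. However, the step you identify as ``the hard part'' rests on a mistaken claim. Assumption~\ref{as:Phi} does \emph{not} require $\varphi$ to be globally Lipschitz; it requires the derivative mapping $\varphi'\colon\bigoplus_{i=1}^n H\to\cL(\bigoplus_{i=1}^n H,\bR)$ to be globally Lipschitz. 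For $\varphi(y_1,y_2)=\langle y_1,\phi_1\rangle\langle y_2,\phi_2\rangle$ the derivative is
\begin{equation*}
\varphi'(y_1,y_2)(z_1,z_2)=\langle z_1,\phi_1\rangle\langle y_2,\phi_2\rangle+\langle y_1,\phi_1\rangle\langle z_2,\phi_2\rangle,
\end{equation*}
which is an \emph{affine} function of $(y_1,y_2)$ with values in $\cL(H\oplus H,\bR)$, and an affine map is globally Lipschitz: one checks directly that $\|\varphi'(y)-\varphi'(w)\|_{\cL(H\oplus H,\bR)}\leq \|\phi_1\|\,\|\phi_2\|\,(\|y_1-w_1\|+\|y_2-w_2\|)$. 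You appear to have conflated ``affine, hence linearly growing'' with ``not globally Lipschitz''; linear growth of $\varphi'$ (equivalently, quadratic growth of $\varphi$) is precisely what the assumption permits, as the paper notes explicitly just before Assumption~\ref{as:Phi}. Consequently the product functional satisfies the hypothesis verbatim, no extension of Theorem~\ref{thm:weak2} is needed, and the localization argument you sketch (restricting $\varphi'$ to an $L^2(\Omega)$-ball) is superfluous.

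Once this is corrected, the rest of your argument coincides with the paper's proof: three applications of Theorem~\ref{thm:weak2}, the algebraic identity for the difference of covariances, and the uniform second-moment bounds to absorb the factors $\bE\langle X(t_2),\phi_2\rangle$ and $\bE\langle \tilde X_{h,k}(t_1),\phi_1\rangle$. The only genuine defect in the proposal is that its central technical concern is vacuous and the promised ``extension'' is never actually carried out; had the assumption truly failed, your sketch of how to repair the theorem would not constitute a proof.
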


\begin{proof}[Proof of Corollary~\ref{cor:cov}]
For random variables $Y_1,Y_2,Z_1,Z_2\in L^2(\Omega;H)$ and vectors $\phi_1,\phi_2\in H$ it holds that
\begin{equation}\label{eq:cov_calc}
\begin{split}
&\mathrm{Cov}\big(\langle Y_1,\phi_1 \rangle,\langle Y_2,\phi_2 \rangle\big)
  -
  \mathrm{Cov}\big(\langle Z_1,\phi_1 \rangle,\langle Z_2,\phi_2 \rangle\big)\\
&
  =
  \E\big[
    \langle Y_1,\phi_1 \rangle\langle Y_2,\phi_2 \rangle
    -
    \langle Z_1,\phi_1 \rangle\langle Z_2,\phi_2 \rangle
  \big] 
   - 
    \E\big[\langle Y_1,\phi_1 \rangle
    -
   \langle Z_1,\phi_1\rangle\big]
    \,\E\langle Y_2,\phi_2\rangle\\
&\quad
  -
  \E\langle Z_1,\phi_1\rangle\,
  \E\big[\langle Y_2,\phi_2\rangle-\langle Z_2,\phi_2\rangle\big]
\end{split}
\end{equation}
We consider the Borel measure $\mu:=\delta_{t_1}+\delta_{t_2}$ on $[0,T]$ as well as the functionals $f_i\colon L^2([0,T],\mu;H)\to\bR$, $i\in\{1,2,3\}$, given by
$f_1(x):=\langle x(t_1),\phi_1\rangle\langle x(t_2),\phi_2\rangle$, $f_2(x):=\langle x(t_1),\phi_1\rangle$, $f_3(x):=\langle x(t_2),\phi_2\rangle$.
%the functions $\varphi_1, \varphi_2, \varphi_3 \colon H\oplus H\to \R$, by $\varphi_1(y_1,y_2):=\langle y_1,\phi_1\rangle\langle y_2,\phi_2\rangle$, 
%$\varphi_2(y_1,y_2):=\langle y_1,\phi_1\rangle$, $\varphi_3(y_1,y_2):=\langle y_2,\phi_2\rangle$. Let $\mu$ be the Borel measure on $[0,T]$ given by $\mu = \delta_{t_1} + \delta_{t_2}$ and let $\Phi_i\colon L^2([0,T],\mu;H)\to \R$, $i=1,2,3$ be the functionals $\Phi_i(x)=\varphi_i(x(t_1),x(t_2))$, $i=1,2,3$. 
These functionals satisfy Assumption~\ref{as:Phi}. From \eqref{eq:cov_calc} with $Y_1 = X(t_1)$, $Y_2 = X(t_2)$, $Z_1=\tilde X_{h,k}(t_1)$ and $Z_2=\tilde X_{h,k}(t_2)$ we obtain
\begin{align*}
&\big|
    \mathrm{Cov}\big(\langle X(t_1),\phi_1\rangle,\langle X(t_2),\phi_2\rangle\big)
    -
    \mathrm{Cov}\big(\langle \tilde X_{h,k}(t_1),\phi_1\rangle,\langle \tilde X_{h,k}(t_2),\phi_2\rangle\big)
  \big|\\
&
  \leq
  \big|
    \E
    \big[
      f_1(X)
      -
      f_1(\tilde X_{h,k})
    \big]
  \big|
  +
  \|\phi_2\|
  \sup_{t\in[0,T]}\|X(t)\|_{L^2(\Omega;H)}
  \big|
    \E
    \big[
      f_2(X)
      -
      f_2(\tilde X_{h,k})
    \big]
  \big|\\
&\quad
  +
  \|\phi_1\|
  \sup_{h,k\in(0,1)}
  \|\tilde X_{h,k}(t_1)\|_{L^2(\Omega;H)}
  \big|
    \E
    \big[
      f_3(X)
      -
      f_3(\tilde X_{h,k})
    \big]
  \big|
\end{align*}
Three applications of Theorem~\ref{thm:weak2} together with \eqref{eq:XL2} and the estimate~\eqref{eq:lemstab1} below complete the proof. 
\end{proof}

\subsection{A regularity result for the discrete solution}
\label{subsec:weak2}

Here we prove an analogue of Proposition~\ref{thm:reg2} for the discrete solution. It has Gaussian counterparts in \cite[Proposition 4.3]{AnderssonKovacsLarsson} and \cite[Proposition 3.17]{AnderssonKruseLarsson}.

\begin{prop}\label{lem:stab}
Let Assumption~\ref{as:SPDE} and \ref{ass:discretization} hold. 
Depending on the value of $\beta\in(0,1]$, we assume either that $q\in(1,\tfrac2{1-\beta})$ if $\beta\in(0,1)$ or $q=\infty$ if $\beta=1$.
%and let $q\in(2,\infty]$ be an extended real number satisfying $q=\infty$ if $\beta=1$ and $q\in[2,\tfrac2{1-\beta})$ otherwise.  
Then,
\begin{align}\label{eq:DXhk_uniform}
  \sup_{h,k\in(0,1)}\sup_{m\in\{0,\dots,M_k\}}
  \|X^m_{h,k}\|_{\bM^{1,2,q}(H)}
  <\infty.
\end{align}
%and in particular for all finite Borel measures $\mu$ on $[0,T]$ and $h,k\in(0,1)$ it holds 
%\begin{align*}
%  \sup_{h,k\in(0,1)}
%  \|\tilde X_{h,k}\|_{L^1([0,T],\mu;\bM^{1,2,q}(H))} 
%  <\infty.
%\end{align*}  
\end{prop}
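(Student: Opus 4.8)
The plan is to mirror the strategy used in the proof of Proposition~\ref{thm:reg2}, but now working with the discrete recursion \eqref{eq:mild_sol_discr_semilin} in place of the mild formulation \eqref{eq:SPDE_mild}. First I would apply the difference operator $D$ to each term of \eqref{eq:mild_sol_discr_semilin}. Since $X_0$ is deterministic, the term $S^m_{h,k}X_0$ has vanishing derivative. For the drift sum $k\sum_{j=0}^{m-1}S^{m-j}_{h,k}F(X^j_{h,k})$, the chain rule (Lemma~\ref{lem:chain}) gives $D_{s,x}F(X^j_{h,k})=F(X^j_{h,k}+D_{s,x}X^j_{h,k})-F(X^j_{h,k})$. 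For the noise term, I would first rewrite $L(t_{j+1})-L(t_j)=\int_{t_j}^{t_{j+1}}\!\int_U x\,\tilde N(\dl r,\dl x)$ using the embedding $\kappa$ from Subsection~\ref{subsec:square}, and then apply the commutation relation from Proposition~\ref{prop:comm_space-time2} to obtain $D_{s,x}\sum_{j=0}^{m-1}S^{m-j}_{h,k}(L(t_{j+1})-L(t_j))=\sum_{j=0}^{m-1}\mathds{1}_{(t_j,t_{j+1}]}(s)\,S^{m-j}_{h,k}x$. The outcome is a discrete analogue of \eqref{eq:DX2}, a recursion for $D_{s,x}X^m_{h,k}$ in terms of $D_{s,x}X^j_{h,k}$, $j<m$.

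Next I would take norms in $H$ and use the discrete smoothing estimate \eqref{eq:estAh1} with $\rho=1-\beta$, together with $U=\dot H^{\beta-1}$ and the Lipschitz bound on $F$, to derive the pointwise (in $\Omega\times[0,T]\times U$) inequality
\begin{align*}
\|D_{s,x}X^m_{h,k}\|
&\leq
k\,D_{1-\beta}\,|F|_{\Lip^0(H,\dot H^{\beta-1})}\sum_{j:\,t_j\geq s}t_{m-j}^{-\frac{1-\beta}2}\|D_{s,x}X^j_{h,k}\|\\
&\quad+\sum_{j=0}^{m-1}\mathds{1}_{(t_j,t_{j+1}]}(s)\,D_{1-\beta}\,\|x\|_U\,t_{m-j}^{-\frac{1-\beta}2}.
\end{align*}
This is a discrete Gronwall inequality with a weakly singular kernel $t_{m-j}^{-\frac{1-\beta}2}$. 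Applying a discrete Gronwall lemma of the appropriate type then yields a bound $\|D_{s,x}X^m_{h,k}\|\leq \mathds{1}_{s\leq t_m}\,C\,\|x\|_U\,t_{m-j(s)}^{-\frac{1-\beta}2}$, uniform in $h,k,m$, where $j(s)$ is the index with $s\in(t_{j(s)},t_{j(s)+1}]$.

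Finally I would integrate this bound to control the seminorm. Summing $\|x\|_U^2$ against $\nu$ produces the factor $|\nu|_2^2<\infty$ from \eqref{eq:ass_nu}, and the remaining integral over $s$ is a Riemann-sum analogue of $\int_0^{t_m}(t_m-s)^{q\cdot\frac{\beta-1}2}\,\dl s$, which is finite uniformly in $h,k,m$ precisely because $q<\tfrac2{1-\beta}$ guarantees $q\cdot\tfrac{\beta-1}2>-1$; the case $\beta=1$, $q=\infty$ is handled analogously. Combined with the uniform $L^2(\Omega;H)$-bound on $X^m_{h,k}$ (which follows as in the strong convergence analysis, cf.~Proposition~\ref{prop:strong_error_semilin}), this gives \eqref{eq:DXhk_uniform}. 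The main obstacle I anticipate is establishing the discrete weakly singular Gronwall estimate with constants uniform in the mesh parameters $h,k$ and in $m\in\{0,\dots,M_k\}$; care is needed to ensure the singular kernel $t_{m-j}^{-\frac{1-\beta}2}$ does not produce blow-up as $k\to0$, which is where the restriction $q<\tfrac2{1-\beta}$ and the summability of the kernel enter decisively.
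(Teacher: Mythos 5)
Your proposal is correct and its first half coincides with the paper's proof: applying $D$ to the recursion \eqref{eq:mild_sol_discr_semilin} via Lemma~\ref{lem:chain}, Lemma~\ref{lem:DF_is_zero} and Proposition~\ref{prop:comm_space-time2} gives exactly the discrete identity \eqref{eq:DXhk}, and the smoothing estimate \eqref{eq:estAh1} with $\rho=1-\beta$ together with $U=\dot H^{\beta-1}$ and the Lipschitz bound on $F$ is used in both arguments. Where you diverge is the order of operations in the second half: you run a pointwise (in $\Omega\times[0,T]\times U$) discrete Gronwall argument with the weakly singular inhomogeneity $\|x\|_U\,t_{m-j(s)}^{-\frac{1-\beta}2}$ and only afterwards integrate in $x$ and $s$ --- mirroring the paper's treatment of the continuous solution in Proposition~\ref{thm:reg2} --- whereas the paper first takes the $L^\infty(\Omega;L^q([0,T];L^2(U;H)))$-norm of both sides of \eqref{eq:DXhk} (estimates \eqref{eq:lemstab2}--\eqref{eq:lemstab4}), obtaining a scalar recursion with \emph{constant} inhomogeneous term, and then applies the discrete Gronwall Lemma~\ref{lem:discrete_gronwall} as stated in the appendix. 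The practical consequence is the one you anticipate as the ``main obstacle'': your route needs a discrete Gronwall lemma with singular inhomogeneity $A\,t_{m-j(s)}^{-1+\alpha}$ (a discrete analogue of Lemma~\ref{lem:gronwall}), which the paper does not provide and which you would have to state and prove with constants uniform in $h,k,m$; the paper's ordering sidesteps this entirely, at the cost of a short induction to ensure $DX^m_{h,k}\in L^\infty(\Omega;L^q([0,T];L^2(U;H)))$ before Gronwall is invoked. Both routes deliver the stronger $\bM^{1,\infty,q}$-seminorm bound that is actually used later (in the constant $K$ of Lemma~\ref{lem:dual_conv}), and your concluding step combining this with the uniform $L^2$-bound \eqref{eq:lemstab1} matches the paper.
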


\begin{proof}
By a classical Gronwall argument based on Lemma~\ref{lem:discrete_gronwall}, it holds that
\begin{align}\label{eq:lemstab1}
  \sup_{h,k\in(0,1)}\sup_{m\in\{0,\dots,M_k\}}
  \|X^m_{h,k}\|_{L^2(\Omega;H)}
  <\infty.
\end{align}  
Up to some straightforward modifications, the proof of \eqref{eq:lemstab1} is analogous to that of \cite[Proposition 3.16]{AnderssonKruseLarsson} in the Gaussian case and is therefore omitted.
%In \cite[Proposition 3.16]{AnderssonKruseLarsson} it is assumed that $F$ is Fr\'{e}chet differentiable, but this stronger assumption is not used. G\'{a}teaux differentiability is enough.
Next, we rewrite the scheme \eqref{eq:mild_sol_discr_semilin} in the form
\begin{align*}
X^m_{h,k}
= 
S^m_{h,k}X_0+k\sum_{j=0}^{m-1}S^{m-j}_{h,k}F(X^j_{h,k}) 
+\sum_{j=0}^{m-1}\int_0^T\int_U\mathds{1}_{(t_j,t_{j+1}]}(s)\,S^{m-j}_{h,k}x\,\tilde N(\dl s,\dl x),
\end{align*}
$m\in\{1,\ldots,M\}$. Applying the difference operator $D$ on the single terms in this equation and taking into account Lemma~\ref{lem:chain}, Lemma~\ref{lem:DF_is_zero} and Proposition~\ref{prop:comm_space-time2}, we obtain

%The Malliavin derivative of $X_{h,k}^1$ is given by
%\begin{align*}
%  D_{s,x}X_{h,k}^1
%  &=
%  D_{s,x}\big(S_{h,k}X_0\big)
%  +
%  D_{s,x}\big(k\,S_{h,k}F(P_h X_0)\big)
%  +
%  D_{s,x}\big(S_{h,k}^1L(t_1)\big)\\  
%  &=
%  \mathds 1_{(0,t_1]}(s)\,S_{h,k}^1x,
%\end{align*}
%$(s,x)\in[0,T]\times U$. %$\bP\otimes\dl s\otimes\nu(\dl x)$-a.e.\ on $\Omega\times[0,T]\times U$.
%It is clear that $DX_{h,k}^1\in L^{\infty}(\Omega;L^q([0,T];L^2(U;H)))$. Indeed, by \eqref{eq:estAh1}
%\begin{align*}
%  \|DX_{h,k}^1\|_{L^{\infty}(\Omega;L^q([0,T];L^2(U;H)))}
%&=
%  \big\|
%    S_{h,k}^1A^{\frac{1-\beta}2}
%  \big\|_{\LB(H)}
%  \int_U
%    \big\|
%      A^{\frac{\beta-1}2}x
%    \big\|^2
%  \nu(\diffin x)
%  k^{\frac1q}\\
%&\leq
%  D_{1-\beta}
%  |\nu|_2
%  k^{\frac1q-\frac{1-\beta}2}
%  <\infty.
%\end{align*}
%Taking the Malliavin derivative of $X_{h,k}^j$ for $j>1$, by the termwise action of $D_{s,x}$ in \eqref{eq:mild_sol_discr_semilin} yields similar to \eqref{eq:F_calc} and using Proposition~\ref{prop:comm_space-time2}
\begin{equation}\label{eq:DXhk}
\begin{aligned}
  D_{s,x}X^m_{h,k}
  &=
  k\sum_{j=\lceil s\rceil_k}^{m-1}S^{m-j}_{h,k}\big[F(X_{h,k}^j+D_{s,x}X_{h,k}^j)-F(X_{h,k}^j)\big]\\
  &\quad+
  \sum_{j=0}^{m-1}\mathds 1_{(t_j,t_{j+1}]}(s)\,S^{m-j}_{h,k}x 
\end{aligned}
\end{equation}
holding $\bP\otimes\dl s\otimes\nu(\dl x)$-almost everywhere on $\Omega\times[0,T]\times U$.
Here we denote for $s\in[0,T]$ by $\lceil s\rceil_k$ is the smallest number $i\in \N$ such that $ik\geq s$. According to Lemma~\ref{lem:DF_is_zero}, the identity $D_{s,x}X_{h,k}^j=0$ holds $\bP\otimes\dl s\otimes\nu(\dl x)$-almost everywhere on $\Omega\times(t_j,T]\times U$.
Taking norms in \eqref{eq:DXhk} yields
\begin{equation}\label{eq:lemstab2}
\begin{aligned}
&\|
    D_{s,x}X^m_{h,k}
  \|_{L^\infty(\Omega;L^q([0,T];L^2(U;H)))}\\
&\leq
  k\sum_{j=0}^{m-1}
   \Big\| S^{m-j}_{h,k}\big[F(X_{h,k}^j+DX_{h,k}^j)-F(X_{h,k}^j)\big]
  \Big\|_{L^\infty(\Omega;L^q([0,T];L^2(U;H)))}\\
&\quad+
  \Big\|
    (s,x)\mapsto \sum_{j=0}^{m-1} \mathds 1_{(t_j,t_{j+1}]}(s)\,S^{m-j}_{h,k}x
  \Big\|_{L^q([0,T];L^2(U;H))}.
\end{aligned}
\end{equation}
Using the estimate \eqref{eq:estAh1} and the Lipschitz assumption on $F$, we obtain
\begin{equation}\label{eq:lemstab3}
\begin{aligned}
& \Big\| S^{m-j}_{h,k}\big[F(X_{h,k}^j+DX_{h,k}^j)-F(X_{h,k}^j)\big]
  \Big\|_{L^\infty(\Omega;L^q([0,T];L^2(U;H)))}\\
&\quad
  \leq
  D_{1-\beta}\,t_{m-j}^{\frac{\beta-1}2}|F|_{\Lip(H,\dot H^{\beta-1})}
    \|DX_{h,k}^j\|_{L^\infty(\Omega;L^q([0,T];L^2(U,H)))}.
\end{aligned}
\end{equation}
Concerning the second term in \eqref{eq:lemstab2} we apply the estimate \eqref{eq:estAh1} together with the identity $U=\dot H^{\beta-1}$ and observe that
\begin{equation}\label{eq:lemstab4}
\begin{aligned}
&\Big\|
    (s,x)\mapsto \sum_{j=0}^{m-1} \mathds 1_{(t_j,t_{j+1}]}(s)\,S^{m-j}_{h,k}x
  \Big\|_{L^q([0,T];L^2(U;H))}\\
&\quad
  =
  \Big(
    \int_0^{T}
      \Big(
        \int_U
          \Big\|  
            \sum_{j=0}^{m-1} \mathds 1_{(t_j,t_{j+1}]}(s)\,S^{m-j}_{h,k}x
          \Big\|^2
        \nu(\diffin x)
      \Big)^\frac{q}2
    \diffout s
  \Big)^{\frac1q}\\
&\quad
  \leq
  D_{1-\beta}\,
  |\nu|_2
%  \Big(
%    \int_U
%      \|x\|_U^2
%    \nu(\diffin x)
%  \Big)^{\frac12}
  \Big(
    k\sum_{j=0}^{m-1}
      t_{m-j}^{\frac{q(\beta-1)}2}
  \Big)^{\frac1q}
  \leq
  D_{1-\beta}\,
  |\nu|_2
   \Big(
    \int_0^T
      (T-r)^{\frac{q(\beta-1)}2}
    \dl r
  \Big)^{\frac1q}
  <\infty.
\end{aligned}
\end{equation}
The penultimate inequality follows by approximating the sum by a Riemann integral and observing that the singularity is integrable. 
From \eqref{eq:lemstab2}, \eqref{eq:lemstab3} and \eqref{eq:lemstab4} we conclude that for all $m\in\{1,\dots M_k\}$, uniformly in $h,k\in(0,1)$,
\begin{align*}\label{eq:lemstab5}
  \|DX_{h,k}^m\|_{L^\infty(\Omega;L^q([0,T];L^2(U,H)))}
  \lesssim
  1 
  +
  k \sum_{j=0}^{m-1}t_{m-j}^{\frac{\beta-1}2}
    \|DX_{h,k}^j\|_{L^\infty(\Omega;L^q([0,T];L^2(U,H)))}.
\end{align*}
%Since\todo{evtl. kürzen}
%\begin{align*}
%  \|DX_{h,k}^1\|_{L^{\infty}(\Omega;L^q([0,T];L^2(U;H)))}
%&\leq
%  \big\|
%    S_{h,k}^1A^{\frac{1-\beta}2}
%  \big\|_{\LB(H)}
%  \int_U
%    \big\|
%      A^{\frac{\beta-1}2}x
%    \big\|^2
%  \nu(\diffin x)
%  k^{\frac1q}\\
%&\leq
%  D_{1-\beta}
%  |\nu|_2\,
%  k^{\frac1q-\frac{1-\beta}2}
%  <\infty,
%\end{align*}
By induction we obtain that $DX_{h,k}^m\in L^{\infty}(\Omega;L^q([0,T];L^2(U;H)))$ for all $m$, so that \eqref{eq:lemstab1} and 
an application of the discrete Gronwall Lemma~\ref{lem:discrete_gronwall} yield the uniform bound \eqref{eq:DXhk_uniform}. 
%The second statement is a direct consequence of the first.
\end{proof}

\subsection{Convergence in negative order spaces}
\label{subsec:weak3}

The following crucial result has Gaussian counterparts in \cite[Lemma 4.6]{AnderssonKovacsLarsson} and \cite[Lemma 4.6]{AnderssonKruseLarsson}.

\begin{lemma}\label{lem:dual_conv}
Let Assumption~\ref{as:SPDE} and \ref{ass:discretization} hold, let $(X(t))_{t\in[0,T]}$ be the mild solution to Eq.~\eqref{eq:SPDE_additive} and $(\tilde X_{h,k}(t))_{t\in[0,T]}$ be its discretization given by \eqref{eq:mild_sol_discr_semilin}, \eqref{eq:mild_sol_discr_interpol_semilin}. Then, for every $\gamma\in[0,\beta)$ and $q'=\tfrac2{1+\gamma}$ there exists a constant $C\in [0,\infty)$, 
%$C=C(\gamma,\beta,\delta,A,F,\nu,X_0,T,(V_h)_{h\in(0,1)},(A_h)_{h\in(0,1)})\in(0,\infty)$, 
which does not depend on $h,k$, such that
%\begin{align*} 
%  \sup_{m\in\{1,\dots,M_k\}}
%  \|X(t_m)-X^m_{h,k}\|_{\bM^{-1,2,q'}(H)}
%  \leq
%  C
%  \big(
%    h^{2\gamma}
%    +
%    k^\gamma
%  \big),
%  \quad
%  h,k\in(0,1).
%\end{align*}
\begin{align*} 
  \sup_{t\in[0,T]}\|\tilde X_{h,k}(t)-X(t)\|_{\bM^{-1,2,q'}(H)}
  \leq
  C\,
  \big(
    h^{2\gamma}
    +
    k^\gamma
  \big),
  \quad
  h,k\in(0,1).
\end{align*}
\end{lemma}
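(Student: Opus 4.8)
The plan is to compare the discrete solution with the exact one at the grid points and to treat the piecewise-constant interpolation error separately. Abbreviating $\bar S_{h,k}(s):=S^{m-j}_{h,k}$ and $\bar X_{h,k}(s):=X^j_{h,k}$ for $s\in(t_j,t_{j+1}]$ and using $L(t_{j+1})-L(t_j)=\int_{t_j}^{t_{j+1}}\int_U x\,\tilde N(\dl s,\dl x)$, I first rewrite, for $t\in[t_m,t_{m+1})$, the error as $\tilde X_{h,k}(t)-X(t)=(X^m_{h,k}-X(t_m))+(X(t_m)-X(t))$, where
\begin{align*}
X^m_{h,k}-X(t_m)
&= (S^m_{h,k}-S(t_m))X_0\\
&\quad + \int_0^{t_m}\big[\bar S_{h,k}(s)F(\bar X_{h,k}(s))-S(t_m-s)F(X(s))\big]\,\dl s\\
&\quad + \int_0^{t_m}\int_U\big[\bar S_{h,k}(s)-S(t_m-s)\big]x\,\tilde N(\dl s,\dl x).
\end{align*}
The interpolation error $X(t_m)-X(t)$ is immediately controlled by the time-regularity result Proposition~\ref{thm:reg3}: since $|t-t_m|<k$, its $\bM^{-1,2,q'}(H)$-norm is $\leq C k^{\gamma}$. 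For the initial-value term I use the embedding $L^2(\Omega;H)\subset\bM^{-1,2,q'}(H)$ and write $(S^m_{h,k}-S(t_m))X_0=E_{h,k}^m A^{-\gamma}A^{\gamma}X_0$; applying \eqref{eq:estAh2} with $\sigma=2\gamma$ and $\rho=-2\gamma$ (the left endpoint of the admissible range) gives the singularity-free bound $\|E_{h,k}^m A^{-\gamma}\|_{\cL(H)}\leq D_{-2\gamma,2\gamma}(h^{2\gamma}+k^{\gamma})$, so this term is $\leq C(h^{2\gamma}+k^{\gamma})\|X_0\|_{\dot H^{2\gamma}}$, finite because $X_0\in\dot H^{2\beta}\subset\dot H^{2\gamma}$.

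For the stochastic term I apply the negative norm inequality Proposition~\ref{prop:negnorm2} with $p'=2$, which bounds its $\bM^{-1,2,q'}(H)$-norm by the $L^2(\Omega;L^{q'}([0,T];L^2(U;H)))$-norm of the deterministic integrand. Using $U=\dot H^{\beta-1}$, i.e.\ $\|[\cdots]x\|\leq\|[\cdots]A^{\frac{1-\beta}2}\|_{\cL(H)}\|x\|_U$ with $\int_U\|x\|_U^2\,\nu(\dl x)=|\nu|_2^2$, this reduces matters to estimating $\big(\int_0^{t_m}\|[\bar S_{h,k}(s)-S(t_m-s)]A^{\frac{1-\beta}2}\|_{\cL(H)}^{q'}\,\dl s\big)^{1/q'}$. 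I split the operator error into the numerical part $E_{h,k}^{m-j}A^{\frac{1-\beta}2}$, estimated by \eqref{eq:estAh2} with $\sigma=2\gamma$, and the time-shift part $(S(t_{m-j})-S(t_m-s))A^{\frac{1-\beta}2}$, estimated by \eqref{eq:smoothing}--\eqref{eq:continuity}; both are $\lesssim(h^{2\gamma}+k^{\gamma})(t_m-s)^{-\frac{1-\beta+2\gamma}2}$. This is exactly the step where the rate doubles: the choice $q'=\tfrac2{1+\gamma}$ makes $\tfrac{q'}{2}(1-\beta+2\gamma)=\tfrac{1-\beta+2\gamma}{1+\gamma}<1$ precisely when $\gamma<\beta$, so the singular time integral converges and the stochastic term is $\leq C(h^{2\gamma}+k^{\gamma})$.

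The drift term is the recursive part and I split it as $\int_0^{t_m}[\bar S_{h,k}(s)-S(t_m-s)]F(\bar X_{h,k}(s))\,\dl s+\int_0^{t_m}S(t_m-s)[F(\bar X_{h,k}(s))-F(X(s))]\,\dl s$. The first summand is handled like the stochastic term, now with an ordinary ($L^1$-in-time) Bochner estimate: Lemma~\ref{lem:op_norm2} together with $F\in\Lip^0(H,\dot H^{\beta-1})$, the uniform moment bound \eqref{eq:lemstab1} and \eqref{eq:estAh2} with $\sigma=2\gamma$ yield $\leq C(h^{2\gamma}+k^{\gamma})$ since $\frac{1-\beta+2\gamma}2<1$. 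For the second summand I use that $F\in\Lip^1(H,\dot H^{-\delta})$ (with $\dot H^{-\delta}\subseteq\dot H^{\beta-1}$ as $\delta\geq1-\beta$) and apply the local Lipschitz bound Proposition~\ref{prop:Lip2}, whose prefactor is uniformly bounded thanks to $\sup_{t}|X(t)|_{\bM^{1,\infty,q}}<\infty$ (Proposition~\ref{thm:reg2}) and the uniform bound on $|X^m_{h,k}|_{\bM^{1,\infty,q}}$ from the proof of Proposition~\ref{lem:stab}; here $q=\tfrac2{1-\gamma}$ is the conjugate of $q'$ and lies below $\tfrac2{1-\beta}$ because $\gamma<\beta$. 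Writing $F(\bar X_{h,k}(s))-F(X(s))=S(t_m-s)A^{\frac{\delta}2}\cdot A^{-\frac\delta2}(\cdots)$ and pulling out $\|S(t_m-s)A^{\frac\delta2}\|_{\cL(H)}\leq C_\delta(t_m-s)^{-\frac\delta2}$ via Lemma~\ref{lem:op_norm2} gives
\[
\Big\|\int_0^{t_m}S(t_m-s)[F(\bar X_{h,k}(s))-F(X(s))]\,\dl s\Big\|_{\bM^{-1,2,q'}(H)}\leq C\int_0^{t_m}(t_m-s)^{-\frac\delta2}\|\tilde X_{h,k}(s)-X(s)\|_{\bM^{-1,2,q'}(H)}\,\dl s,
\]
with integrable kernel since $\delta<2$.

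Collecting the estimates and setting $e^m:=\|X^m_{h,k}-X(t_m)\|_{\bM^{-1,2,q'}(H)}$, I obtain a weakly singular inequality $e^m\leq C(h^{2\gamma}+k^{\gamma})+C\int_0^{t_m}(t_m-s)^{-\delta/2}\|\tilde X_{h,k}(s)-X(s)\|_{\bM^{-1,2,q'}(H)}\,\dl s$; bounding $\|\tilde X_{h,k}(s)-X(s)\|\leq e^{\lfloor s/k\rfloor}+\|X(t_{\lfloor s/k\rfloor})-X(s)\|\leq e^{\lfloor s/k\rfloor}+Ck^{\gamma}$ via Proposition~\ref{thm:reg3} turns this into a discrete weakly singular Gronwall inequality to which Lemma~\ref{lem:discrete_gronwall} applies, giving $\sup_m e^m\leq C(h^{2\gamma}+k^{\gamma})$; adding the interpolation bound $Ck^{\gamma}$ finishes the proof. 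I expect the drift term to be the main obstacle: it is the only place where all the machinery must interact at once — linearizing $F(\bar X_{h,k})-F(X)$ in the negative norm through Proposition~\ref{prop:Lip2} (which forces the uniform $\bM^{1,\infty,q}$-regularity of both processes and the compatibility $q=\tfrac2{1-\gamma}<\tfrac2{1-\beta}$), absorbing the roughness $\dot H^{-\delta}$ of $F'$ into the analytic smoothing $S(t_m-s)A^{\delta/2}$ at the cost of an integrable singularity, and then closing the recursion with a \emph{weakly singular} discrete Gronwall lemma rather than a standard one.
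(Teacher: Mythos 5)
Your proposal is correct and follows essentially the same route as the paper: the same four-term decomposition of $X_{h,k}^m-X(t_m)$, the negative norm inequality for the stochastic term, Lemma~\ref{lem:op_norm2} and Proposition~\ref{prop:Lip2} (with the uniform $\bM^{1,\infty,q}$ bounds from Propositions~\ref{thm:reg2} and \ref{lem:stab}) for the drift difference, the splitting via Proposition~\ref{thm:reg3}, and closure by the discrete Gronwall lemma. The exponent bookkeeping ($\sigma=2\gamma$, the condition $\tfrac{q'}{2}(1-\beta+2\gamma)<1$ matching \eqref{eq:par_calc}, and $\delta<2$ for the singular kernel) also agrees with the paper's argument.
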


\begin{proof}
For notational convenience we introduce the piecewise continuous error mapping $\tilde E_{h,k}\colon [0,T)\to\LB(H)$ given by $\tilde E_{h,k}(t):=S_{h,k}^m-S(t)$ for $t\in[t_{m-1},t_m)$, so that
\begin{align*}
  X_{h,k}^m - X(t_m)
&=
  E_{h,k}^mX_0
  +
  \int_0^{t_m}
    \tilde E_{h,k}(t_m-s)
    F(\tilde X_{h,k}(s))
  \diffout s\\
&\quad
  +
  \int_0^{t_m}
    S(t_m-s)
    \big(
      F(\tilde X_{h,k}(s))-F(X(s))
    \big)
  \diffout s\\
&\quad
  +
  \int_0^{t_m}\int_{\dot H^{\beta-1}}
    \tilde E_{h,k}(t_m-s)x\,\tilde N(\dl s,\dl x).
\end{align*}
Taking norms and using the continuous embedding 
$L^2(\Omega;H)\subset \bM^{-1,2,q'}(H)$ as well as Minkowski's integral inequality yields
\begin{equation}\label{eq:proofnegnorm1}
\begin{aligned}
&\big\|
    X_{h,k}^m - X(t_m)
  \big\|_{\bM^{-1,2,q'}(H)}\\
&\quad
  \leq
  \|
    E_{h,k}^mX_0
  \|
  +
  \int_0^{t_m}
    \big\|
      \tilde E_{h,k}(t_m-s)
      F(\tilde X_{h,k}(s))
    \big\|_{L^2(\Omega;H)}
  \diffout s\\
&\qquad
  +
  \int_0^{t_m}
    \big\|
      S(t_m-s)
      \big(
        F(\tilde X_{h,k}(s))-F(X(s))
      \big)
    \big\|_{\bM^{-1,2,q'}(H)}
  \diffout s\\
&\qquad
  +
  \Big\|
    \int_0^{t_m}\int_{\dot H^{\beta-1}}
    \tilde E_{h,k}(t_m-s)x\,\tilde N(\dl s,\dl x)
  \Big\|_{\bM^{-1,2,q'}(H)}.
\end{aligned}
\end{equation}

We estimate the terms on the right hand side separately. To this end, note that the error estimate \eqref{eq:estAh2} extends to the piecewise continuous error mapping $\tilde E_{h,k}$. Indeed, as a consequence of the identity $\tilde E_{h,k}(t)=E_{h,k}^m+(S(t_m)-S(t))$, $t\in[t_{m-1},t_m)$, and the estimates \eqref{eq:smoothing}, \eqref{eq:continuity}, \eqref{eq:estAh2}, we have
\begin{align}\label{eq:estAh3}
\big\|\tilde E_{h,k}(t)A^{\frac\rho2}\big\|_{\LB(H)}\leq (D_{\rho,\sigma}+C_\sigma C_{\sigma+\rho})\,t^{-\frac{\rho+\sigma}2}\big(h^\sigma+k^{\frac\sigma2}\big),
\end{align}
holding for $\sigma\in[0,2]$, $\rho\in [-\sigma,\min(1,2-\sigma)]$ and $h,k\in(0,1)$, $t\in(0,T]$.

Concerning the first two terms on the right hand side of \eqref{eq:proofnegnorm1} we observe that \eqref{eq:continuity}, 
%\eqref{eq:XL2}, 
\eqref{eq:estAh3}, and the linear growth of $F$ yield
%\todo{check again}
\begin{equation}\label{eq:proofnegnorm2}
\begin{aligned}
&\|
    E_{h,k}^mX_0
  \|
  +
  \int_0^{t_m}
    \big\|
      \tilde E_{h,k}(t_m-s)
      F(\tilde X_{h,k}(s))
    \big\|_{L^2(\Omega;H)}
  \diffout s\\
&\leq
  D_{-2\gamma,2\gamma}\|X_0\|_{\dot H^{2\gamma}}
  \big(
    h^{2\gamma} + k^\gamma
  \big)
  +  
    (D_{1-\beta,2\gamma}+C_{2\gamma}C_{2\gamma+1-\beta})
    \frac{T^{\frac{1+\beta}2-\gamma}}
  {
    (1+\beta)/2-\gamma
  }\\
&\quad
  \cdot\|F\|_{\Lip^0(H,\dot H^{\beta-1})}
   \big(1+\sup_{t\in[0,T]}\|X(t)\|_{L^2(\Omega;H)}\big)
  \big(
    h^{2\gamma} + k^\gamma
  \big).
\end{aligned}
\end{equation}

Next, we use Lemma~\ref{lem:op_norm2}, \eqref{eq:smoothing} and Proposition~\ref{prop:Lip2} to estimate the third term on the right hand side of \eqref{eq:proofnegnorm1} from above by
\begin{equation}\label{eq:proofnegnorm3}
\begin{aligned}
%&\int_0^{t_m}
%    \big\|
%      S(t_m-s)
%      \big(
%        F(\tilde X_{h,k}(s))
%        -
%        F(X(s))
%      \big)
%    \big\|_{\bM^{-1,2,q'}(H)}
%  \diffout s\\
%& \quad
%  \leq
  & 
  \int_0^{t_m}
    \big\|
      S(t_m-s)
      A^{\frac\delta2}
    \big\|_{\LB(H)}
    \big\|
      A^{-\frac\delta2}
      \big(
        F(\tilde X_{h,k}(s))
        -
        F(X(s))
      \big)
    \big\|_{\bM^{-1,2,q'}(H)}
  \diffout s\\
%&\quad
%  \leq
%  C_{\delta}
%  \int_0^{t_m}
%    (t_m-s)^{-\frac\delta2}
%    \big\|
%     F(\tilde X_{h,k}(s))
%        -
%        F(X(s))
%    \big\|_{\bM^{-1,2,q'}(\dot H^{-\delta})}
%  \diffout s\\
%%&\quad
%%  \leq
%%  C_{\delta}
%%  K
%%  \int_0^{t_m}
%%    (t_m-s)^{-\frac\delta2}
%%    \big\|
%%      \tilde X_{h,k}(s)
%%      -
%%      X(s)
%%    \big\|_{\bM^{-1,2,q'}(H)}
%%  \diffout s\\
&\quad
  \leq
  C_{\delta}
  K
  \sum_{i=0}^{m-1}
  \int_{t_i}^{t_{i+1}}
    (t_m-s)^{-\frac\delta2}
    \big\|
      X_{h,k}^i
      -
      X(t_i)
    \big\|_{\bM^{-1,2,q'}(H)}
  \diffout s\\
&\qquad
  +
  C_{\delta}
  K
  \sum_{i=0}^{m-1}
  \int_{t_i}^{t_{i+1}}
    (t_m-s)^{-\frac\delta2}
    \big\|
      X(t_i)
      -
      X(s)
    \big\|_{\bM^{-1,2,q'}(H)}
  \diffout s,
\end{aligned}
\end{equation}
where $K$ is, by Proposition~\ref{thm:reg2}, Proposition~\ref{prop:Lip2} and Lemma~\ref{lem:stab}, the finite constant
\begin{align*}
  K
&=
  4\Big(
    |F|_{\Lip^0(H,\dot H^{-\delta})} 
    + |F|_{\Lip^1(H,\dot H^{-\delta})}\sup_{s\in[0,T]}|X(s)|_{\bM^{1,\infty,q}(H)}\\
& \qquad    
    + |F|_{\Lip^1(H,\dot H^{-\delta})} \sup_{h,k\in(0,1)}\sup_{m\in\{0,\ldots,M_k\}}
    |X_{h,k}^m|_{\bM^{1,\infty,q}(H)}
  \Big)<\infty.
\end{align*}
The terms on the right hand side of \eqref{eq:proofnegnorm3} can be estimated as follows: We have
\begin{align*}
&\sum_{i=0}^{m-1}
  \int_{t_i}^{t_{i+1}}
    (t_m-s)^{-\frac\delta2}
    \big\|
      X_{h,k}^i
      -
      X(t_i)
    \big\|_{\bM^{-1,2,q'}(H)}
  \diffout s\\
&
  \leq
  k
  \sum_{i=0}^{m-2}
    t_{m-i-1}^{-\frac\delta2}
    \big\|
      X_{h,k}^i
      -
      X(t_i)
    \big\|_{\bM^{-1,2,q'}(H)}
    +
    \frac{k^{1-\frac\delta2}}{1-\frac{\delta}2}
    \big\|
      X_{h,k}^{m-1}
      -
      X(t_{m-1})
    \big\|_{\bM^{-1,2,q'}(H)}.    
\end{align*}
Since for all $m\in\{2,3,\ldots\}$ it holds that
$
  \max_{i\in\{0,1,\dots, m-2\}}\big(
    t_{m-i-1}^{-\frac\delta2}  \cdot
    t_{m-i}^{\frac\delta2}  \big)
  =$\linebreak$
  \max_{i\in\{0,1,\dots, m-2\}}
    \big((m-i-1)^{-\frac\delta2}\cdot
    (m-i)^{\frac\delta2}\big)
  =
  2^{\frac\delta2},
$
%\begin{align*}
%  \max_{i\in\{0,1,\dots, m-2\}}
%  \frac{
%    t_{m-i-1}^{-\frac\delta2}
%  }{
%    t_{m-i}^{-\frac\delta2}
%  }
%  =
%  \max_{i\in\{0,1,\dots, m-2\}}
%  \frac{
%    (m-i-1)^{-\frac\delta2}
%  }{
%    (m-i)^{-\frac\delta2}
%  }
%  =
%  2^{\frac\delta2},
%\end{align*}
we obtain for all $m\in\N=\{1,2,\ldots\}$
\begin{equation}\label{eq:proofnegnorm3a}
\begin{aligned}
&\sum_{i=0}^{m-1}
  \int_{t_i}^{t_{i+1}}
    (t_m-s)^{-\frac\delta2}
    \big\|
      X_{h,k}^i
      -
      X(t_i)
    \big\|_{\bM^{-1,2,q'}(H)}
  \diffout s\\
&
  \leq
  2^{\frac\delta2}
  k
  \sum_{i=0}^{m-2}
    t_{m-i}^{-\frac\delta2}
    \big\|
      X_{h,k}^i
      -
      X(t_i)
    \big\|_{\bM^{-1,2,q'}(H)}
    +
    \frac{k^{1-\frac\delta2}}{1-\frac{\delta}2}
    \big\|
      X_{h,k}^{m-1}
      -
      X(t_{m-1})
    \big\|_{\bM^{-1,2,q'}(H)}\\
&
  \leq
\frac{  
  2^{\frac\delta2}
  k
  }
  {1-\frac{\delta}2}
  \sum_{i=0}^{m-1}
    t_{m-i}^{-\frac\delta2}
    \big\|
      X_{h,k}^i
      -
      X(t_i)
    \big\|_{\bM^{-1,2,q'}(H)}.    
\end{aligned}
\end{equation}
Moreover, by the H\"older continuity of Proposition~\ref{thm:reg3} it holds
\begin{equation}\label{eq:proofnegnorm3a}
\begin{aligned}
  \sum_{i=0}^{m-1}
  \int_{t_i}^{t_{i+1}}
    (t_m-s)^{-\frac\delta2}
    \big\|
      X(t_i)
      -
      X(s)
    \big\|_{\bM^{-1,2,q'}(H)}
  \diffout s
  \leq
  C
  k^{\gamma}
  \int_{0}^{t_m}
    (t_m-s)^{-\frac\delta2}
  \diffout s
  \lesssim k^{\gamma}.
\end{aligned}
\end{equation}

Concerning the fourth term on the right hand side of \eqref{eq:proofnegnorm1}, note that the negative norm inequality in Proposition~\ref{prop:negnorm2} yields
\begin{equation}\label{eq:proofnegnorm4}
\begin{aligned}
&\Big\|
    \int_0^{t_m}\int_{\dot H^{\beta-1}}
      \tilde E_{h,k}(t_m-s)x\,
    \tilde N(\dl s,\dl x)
  \Big\|_{\bM^{-1,2,q'}(H)}\\
&\qquad
  \leq
  \Big[
    \int_0^{t_m}\Big(\int_{\dot H^{\beta-1}}
      \|\tilde E_{h,k}(t_m-s)x\|^{2}\,
    \nu(\dl x)\Big)^{\frac{q'}2}\diffout s
  \Big]^{\frac1{q'}}\\
&\qquad
  \leq
  D_{1-\beta,2\gamma}
  \,|\nu|_2\,
  \big(
    h^{2\gamma} + k^\gamma
  \big)
  \Big(
    \int_0^{t_m}
      (t_m-s)^{\frac2{1+\gamma}\frac{\beta-1-2\gamma}2}
    \diffout s
  \Big)^{\frac{1+\gamma}2}
  \lesssim
  h^{2\gamma} + k^\gamma,  
\end{aligned}
\end{equation}
where the last integral is finite due to \eqref{eq:par_calc}.

Combining the estimates \eqref{eq:proofnegnorm1}--\eqref{eq:proofnegnorm4} yields
\begin{align*}
&\big\|
    X_{h,k}^m - X(t_m)
  \big\|_{\bM^{-1,2,q'}(H)}
  \lesssim
  h^{2\gamma} + k^\gamma
  +
  k
  \sum_{i=0}^{m-1}
    t_{m-i}^{-\frac\delta2}
    \big\|
      X_{h,k}^i
      -
      X(t_i)
    \big\|_{\bM^{-1,2,q'}(H)}.
\end{align*}
The discrete Gronwall Lemma~\ref{lem:discrete_gronwall} thus implies that there exists $C'\in[
0,\infty)$, which does not depend on $h,k$, such that
%\todo{Problem: Lemma B.2 is formulated for $k$ of the form $T/M$, but not for general $k\in(0,1)$. Do you have an idea how to fix this without much effort?}
%\begin{equation}\label{eq:proofnegnorm4}
%\begin{aligned} 
%  \sup_{m\in\{1,\dots,M_k\}}
%  \|X^m_{h,k}-X(t_m)\|_{\bM^{-1,2,q'}(H)}
%  \leq
%  C'
%  \big(
%    h^{2\gamma}
%    +
%    k^\gamma
%  \big),
%  \quad
%  h,k\in(0,1).
%\end{aligned}
%\end{equation}
%%Finally, since
%%\begin{align*} 
%%&\sup_{t\in[0,T]}\|\tilde X_{h,k}(t)-X(t)\|_{\bM^{-1,2,q'}(H)}\\
%%&\quad
%%  \leq
%%  \sup_{m\in\{0,\ldots,M_k\}}\|X(t_m) -X_{h,k}^m\|_{\bM^{-1,2,q'}(H)}\\
%%&\qquad
%%  +
%%  \sup_{m\in\{0,\ldots,M_k-1\}}
%%  \sup_{t\in[t_m,t_{m+1}]}
%%    \|X(t_m)-X(t)\|_{\bM^{-1,2,q'}(H)},
%%\end{align*}
$
  \sup_{m\in\{1,\dots,M_k\}}
  \|X^m_{h,k}-X(t_m)\|_{\bM^{-1,2,q'}(H)}
  \leq
  C'
  \big(
    h^{2\gamma}
    +
    k^\gamma
  \big)$
   for all $
  h,k\in(0,1).
$
This and Proposition~\ref{thm:reg3} imply the claimed assertion.
\end{proof}

\subsection{Proof of the main result}
\label{subsec:weak4}
We are finally prepared to prove the weak convergence result in Theorem~\ref{thm:weak2}. Recall from Subsection~\ref{subsec:mainresult} that the processes $X=(X(t))_{t\in[0,T]}$ and $\tilde X_{h,k}=(\tilde X_{h,k}(t))_{t\in[0,T]}$ belong to $L^2(\Omega;L^1([0,T],\sum_{i=1}^n\mu_i;H))$.

To simplify notation, we introduce the
$(\bigoplus_{i=1}^n\!H)$-valued random variables $Y=(Y^{(1)},\ldots,Y^{(n)})$, $\tilde Y_{h,k}=(\tilde Y_{h,k}^{(1)},\ldots,\tilde Y_{h,k}^{(n)})$ and $\Phi_{h,k}=(\Phi^{(1)}_{h,k},\ldots,\Phi^{(n)}_{h,k})$ defined by
\begin{equation}\label{eq:defPhi}
\begin{aligned}
Y^{(i)}&:=\int_{[0,T]}X(t)\,\mu_i(\dl t),\quad \tilde Y^{(i)}_{h,k}:=\int_{[0,T]}\tilde X_{h,k}(t)\,\mu_i(\dl t),\\
\Phi^{(i)}_{h,k}&:=\int_0^1\partial_i\varphi\big((1-\theta)Y+\theta\tilde Y_{h,k}\big)\,\dl\theta.
\end{aligned}
\end{equation}
Here we denote for $x=(x^{(1)},\ldots,x^{(n)})\in\bigoplus_{j=1}^n\!H$ by $\partial_i\varphi(x)=\frac{\partial}{\partial x^{(i)}}\varphi(x)$ the Fréchet derivative of $\varphi$ w.r.t.\ the $i$-th coordinate of $x$, considered as an element of $H$ via the Riesz isomorphism $\mathcal L(H,\bR)\equiv H$. Moreover, we set set $q:=\tfrac2{1-\gamma}$ and $q':=\tfrac2{1+\gamma}$.

Using the notation above, the fundamental theorem of calculus, and duality in the Gelfand triple $\bM^{1,2,q}(H)\subset L^2(\Omega;H)\subset \bM^{-1,2,q'}(H)$, we represent and estimate the weak error as follows:
\begin{equation}\label{eq:proofWeakError}
\begin{aligned}
&\big|\bE\big[f(\tilde X_{h,k})-f(X)\big]\big|
=\big|\bE\big[\varphi(\tilde Y_{h,k})-\varphi(Y)\big]\big|
=\Big|\bE\sum_{i=1}^n\big\langle\Phi^{(i)}_{h,k},\tilde Y^{(i)}_{h,k}-Y^{(i)}\big\rangle\Big|\\
&\quad=\Big|\sum_{i=1}^n\int_{[0,T]}\bE\big\langle\Phi^{(i)}_{h,k},\tilde X_{h,k}(t)-X(t)\big\rangle\,\mu_i(\dl t)\Big|\\
&\quad\leq \sum_{i=1}^n\mu_i([0,T])\,\big\|\Phi^{(i)}_{h,k}\big\|_{\bM^{1,2,q}(H)}\sup_{t\in[0,T]}\big\|\tilde X_{h,k}(t)-X(t)\big\|_{\bM^{-1,2,q'}(H)}
\end{aligned}
\end{equation}
The assertion of Theorem~\ref{thm:weak2} now follows from \eqref{eq:proofWeakError}  together with Lemma~\ref{lem:dual_conv} and  Lemma~\ref{lem:estPhihk} below.

\begin{lemma}\label{lem:estPhihk}
Let Assumption~\ref{as:SPDE}, \ref{ass:discretization} and \ref{as:Phi} hold. 
Let $(X(t))_{t\in[0,T]}$ be the mild solution to Eq.~\eqref{eq:SPDE_additive}, $(\tilde X_{h,k}(t))_{t\in[0,T]}$ be its discretization given by \eqref{eq:mild_sol_discr_semilin}, \eqref{eq:mild_sol_discr_interpol_semilin}, and let $\Phi^{(i)}_{h,k}$, $i\in\{1,\ldots,n\}$, $h,k\in(0,1)$ be the $H$-valued random variables defined by \eqref{eq:defPhi}.
For all $\gamma\in[0,\beta)$ and $q=\tfrac2{1-\gamma}$ it holds that
\begin{align*}
  \max_{i\in\{1,\ldots,n\}}\sup_{h,k\in(0,1)}
  \big\|\Phi^{(i)}_{h,k}\big\|_{\bM^{1,2,q}(H)}
  <\infty.
\end{align*}
\end{lemma}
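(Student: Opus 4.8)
The plan is to bound the two parts of the $\bM^{1,2,q}(H)$-norm of $\Phi^{(i)}_{h,k}$ separately and uniformly in $h,k$ and $i$: the $L^2(\Omega;H)$-norm and the seminorm $|\cdot|_{\bM^{1,2,q}(H)}=\|D\cdot\|_{L^2(\Omega;L^q([0,T];L^2(U;H)))}$. Throughout I write $G_\theta:=(1-\theta)Y+\theta\tilde Y_{h,k}$, a $\bigoplus_{j=1}^n\!H$-valued random variable, so that $\Phi^{(i)}_{h,k}=\int_0^1\partial_i\varphi(G_\theta)\,\dl\theta$, the integrand being $H$-valued via the Riesz identification. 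Since $\varphi'$ is globally Lipschitz, each component $\partial_i\varphi\colon\bigoplus_{j=1}^n\!H\to H$ is globally Lipschitz, with Lipschitz constant $L$ bounded by that of $\varphi'$, and hence of linear growth. For the $L^2$-part I would pull the norm inside the $\theta$-integral and use linear growth to get $\|\Phi^{(i)}_{h,k}\|_{L^2(\Omega;H)}\lesssim 1+\|Y\|_{L^2(\Omega;\bigoplus_j H)}+\|\tilde Y_{h,k}\|_{L^2(\Omega;\bigoplus_j H)}$, then bound each $\|Y^{(j)}\|_{L^2(\Omega;H)}\leq\mu_j([0,T])\sup_t\|X(t)\|_{L^2(\Omega;H)}$ and $\|\tilde Y^{(j)}_{h,k}\|_{L^2(\Omega;H)}\leq\mu_j([0,T])\sup_t\|\tilde X_{h,k}(t)\|_{L^2(\Omega;H)}$, which are finite and uniform by \eqref{eq:XL2} and \eqref{eq:lemstab1}.

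For the seminorm I would first commute $D$ with the $\theta$-integral. Since $D_{s,x}=\varepsilon^+_{s,x}-\mathrm{id}$ acts by shifting the underlying Poisson configuration and is linear, the same measure-theoretic reasoning underlying Proposition~\ref{lem:intDX} (choosing jointly measurable representatives) yields $D_{s,x}\Phi^{(i)}_{h,k}=\int_0^1 D_{s,x}(\partial_i\varphi(G_\theta))\,\dl\theta$, $\bP\otimes\dl s\otimes\nu(\dl x)$-a.e. Applying the chain rule (Lemma~\ref{lem:chain}) together with the linearity $D_{s,x}G_\theta=(1-\theta)D_{s,x}Y+\theta D_{s,x}\tilde Y_{h,k}$ gives $D_{s,x}(\partial_i\varphi(G_\theta))=\partial_i\varphi(G_\theta+D_{s,x}G_\theta)-\partial_i\varphi(G_\theta)$, and Lipschitz continuity of $\partial_i\varphi$ leads to the pointwise bound
\[
\|D_{s,x}\Phi^{(i)}_{h,k}\|_H\leq\tfrac{L}{2}\big(\|D_{s,x}Y\|_{\bigoplus_j H}+\|D_{s,x}\tilde Y_{h,k}\|_{\bigoplus_j H}\big).
\]
Taking the $L^2(\Omega;L^q([0,T];L^2(U;\cdot)))$-norm and using $\|D_{s,x}Y\|_{\bigoplus_j H}\leq\sum_j\|D_{s,x}Y^{(j)}\|_H$ reduces the estimate to bounding $|Y^{(j)}|_{\bM^{1,2,q}(H)}$ and $|\tilde Y^{(j)}_{h,k}|_{\bM^{1,2,q}(H)}$.

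These last quantities I would handle by commuting $D$ with the measure-integrals. By Proposition~\ref{lem:intDX}, $D_{s,x}Y^{(j)}=\int_{[0,T]}D_{s,x}X(t)\,\mu_j(\dl t)$, so Minkowski's integral inequality in the Banach space $L^2(\Omega;L^q([0,T];L^2(U;H)))$ gives $|Y^{(j)}|_{\bM^{1,2,q}(H)}\leq\mu_j([0,T])\sup_t|X(t)|_{\bM^{1,2,q}(H)}$, finite by Proposition~\ref{thm:reg2} (note $q=\tfrac2{1-\gamma}\in(1,\tfrac2{1-\beta})$ when $\beta<1$; for $\beta=1$ one uses $L^\infty([0,T])\hookrightarrow L^q([0,T])$ and $L^\infty(\Omega)\hookrightarrow L^2(\Omega)$). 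Since $\tilde X_{h,k}$ is piecewise constant, $\tilde Y^{(j)}_{h,k}=\sum_m\mu_j([t_m,t_{m+1})\cap[0,T])\,X^m_{h,k}$ is a nonnegative combination with weights summing to $\mu_j([0,T])$, whence $|\tilde Y^{(j)}_{h,k}|_{\bM^{1,2,q}(H)}\leq\mu_j([0,T])\sup_m|X^m_{h,k}|_{\bM^{1,2,q}(H)}$, uniformly finite by Lemma~\ref{lem:stab}. Combining the pieces yields the claimed uniform bound.

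The hard part will be the careful justification of interchanging the difference operator $D$ with the two integrals involved, namely the deterministic parameter integral over $\theta$ and the $\mu_j$-integral; this rests on the linearity of $\varepsilon^+_{s,x}$ together with jointly measurable representatives and Proposition~\ref{lem:intDX}, and on bookkeeping the $\bigoplus_j\!H$-valued Malliavin derivatives correctly. Once the commutation relations and the pointwise Lipschitz estimate are established, the remaining bounds are immediate consequences of Proposition~\ref{thm:reg2}, Lemma~\ref{lem:stab}, \eqref{eq:XL2} and \eqref{eq:lemstab1}.
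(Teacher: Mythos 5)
Your proposal is correct and follows essentially the same route as the paper's proof: linear growth of $\partial_i\varphi$ for the $L^2(\Omega;H)$-part, a chain-rule/Lipschitz estimate for the Malliavin seminorm, and Minkowski's integral inequality combined with Proposition~\ref{thm:reg2}, Proposition~\ref{lem:stab}, \eqref{eq:XL2} and \eqref{eq:lemstab1}. The only organizational difference is that the step you flag as ``the hard part''---commuting $D$ with the $\theta$-integral---is sidestepped in the paper by applying Lemma~\ref{lem:chain} once to the composite measurable map $(y,\tilde y)\mapsto\int_0^1\partial_i\varphi\big((1-\theta)y+\theta\tilde y\big)\,\dl\theta$ evaluated at $(Y,\tilde Y_{h,k})$, which produces your formula for $D_{s,x}\Phi^{(i)}_{h,k}$ directly without any interchange argument.
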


\begin{proof}
First note that the linear growth of $\partial_i\varphi\colon \bigoplus_{j=1}^n\!H\to H$, the estimates \eqref{eq:XL2}, \eqref{eq:lemstab1}, and the fact that $\mu_i([0,T])<\infty$ imply  for all $i\in\{1,\ldots,n\}$ that 
$\sup_{h,k\in(0,1)}\big\|\Phi^{(i)}_{h,k}\big\|_{L^2(\Omega;H)}<\infty$. 
It remains to check that
$
\sup_{h,k\in(0,1)}
  \big|\Phi^{(i)}_{h,k}\big|_{\bM^{1,2,q}(H)}
$
is finite.
The chain rule from Lemma~\ref{lem:chain}, applied to the function $h\colon(\bigoplus_{j=1}^n\!H)\oplus (\bigoplus_{j=1}^n\!H)\to H,\,(y,\tilde y)\mapsto \int_0^1\partial_i\varphi\big((1-\theta)y+\theta\tilde y\big)\,\dl\theta$, yields for all $i\in\{1,\ldots,n\}$ 
\begin{align*}
&D_{s,x}\Phi^{(i)}_{h,k}
=
D_{s,x}\int_0^1\partial_i\varphi\big((1-\theta)Y+\theta\tilde Y_{h,k}\big)\,\dl\theta\\
&=
\int_0^1\!\Big[\partial_i\varphi\Big(\!(1-\theta)\big(Y+D_{s,x}Y\big)\!+\theta\big(\tilde Y_{h,k}+D_{s,x}\tilde Y_{h,k}\big)\Big)\!-\partial_i\varphi\big((1-\theta)Y+\theta\tilde Y_{h,k}\big)\Big]\dl\theta
\end{align*}
$\bP\otimes\dl s\otimes\nu(\dl x)$-almost everywhere on $\Omega\times[0,T]\times U$.
This, the global Lipschitz continuity of $\partial_i\varphi\colon\bigoplus_{j=1}^n\!H\to H$, and Proposition~\ref{lem:intDX} imply
\begin{align*}
\big\|D_{s,x}\Phi^{(i)}_{h,k}\big\|
&\leq |\varphi|_{\Lip^1(\bigoplus_{j=1}^n\!H;\bR)}\Big(\big\| D_{s,x}Y^{(i)}\big\|+\big\| D_{s,x}\tilde Y^{(i)}_{h,k}\big\|\Big)\\
&\leq |\varphi|_{\Lip^1(\bigoplus_{j=1}^n\!H;\bR)}\int_{[0,T]}\Big(\big\| D_{s,x}X(t)\big\|+\big\| D_{s,x}\tilde X_{h,k}(t)\big\|\Big)\mu_i(\dl t)
\end{align*}
Iterated integration w.r.t.\ $\nu(\dl x)$, $\dl s$, $\bP$, and three applications of Minkowski's integral inequality lead to
\begin{align*}
\big|\Phi^{(i)}_{h,k}\big|_{\bM^{1,2,q}(H)}
&\leq |\varphi|_{\Lip^1(\bigoplus_{j=1}^n\!H;\bR)}\int_{[0,T]}\!\Big(|X(t)|_{\bM^{1,2,q}(H)}+\big| \tilde X_{h,k}(t)\big|_{\bM^{1,2,q}(H)}\Big)\mu_i(\dl t)\\
%&\leq |\varphi|_{\Lip^1(\bigoplus_{i=1}^n\!H;\bR)}\mu_i([0,T])\Big(|X(t)|_{\bM^{1,2,q}(H)}+\big| \tilde X_{h,k}(t)\big|_{\bM^{1,2,q}(H)}\Big)
\end{align*}
The estimates \eqref{eq:additive_reg}, \eqref{eq:DXhk_uniform} and the assumption that $\mu_i([0,T])<\infty$ thus imply for all $i\in\{1,\ldots,n\}$ the finiteness of $\sup_{h,k\in(0,1)}\big|\Phi^{(i)}_{h,k}\big|_{\bM^{1,2,q}(H)}$.
\end{proof}

\section*{Acknowledgement}

\appendix
Kristin Kirchner, Raphael Kruse, Annika Lang and Stig Larsson are gratefully acknowledged for participating in early discussion regarding this work and \cite{AnderssonLindner2017a}.

\section{Gronwall Lemmata}

In this section we state two versions of Gronwall's lemma. The first one follows from the arguments in the proof of \cite[Lemma 6.3]{elliott1992} together with the standard version of Gronwall's lemma for measurable functions.
The second one is a slight modification of \cite[Lemma~A.4]{kruse2013}, compare also \cite[Lemma 7.1]{elliott1992}.
%from \cite[Chapter 7, Exercise 4]{henry1981} and \cite[Lemma 7.1]{elliott1992}, respectively.

\begin{lemma}[Generalized Gronwall lemma]\label{lem:gronwall}
Let $T\in(0,\infty)$ and $\phi\colon\{(t,s):0\leq s\leq t\leq T\}\to[0,\infty)$ be a Borel measurable function satisfying
$\int_s^T\phi(r,s)\,\dl r<\infty$ for all $s\in[0,T]$. If
\begin{align*}
\phi(t,s)\leq A\,(t-s)^{-1+\alpha}+B\int_s^t(t-r)^{-1+\beta}\phi(r,s)\,\dl r,\quad 0\leq s\leq t\leq T,
\end{align*}
for some constants $A,B\in[0,\infty)$, $\alpha,\beta\in(0,\infty)$, then there exists a constant $C=C(B,T,\alpha,\beta)\in[0,\infty)$ such that
$
\phi(t,s)\leq C\,A\,(t-s)^{-1+\alpha},\;0\leq s\leq t\leq T.
$
%----
%Let $0\leq T_0<T_1$, $a,b>0$, $\alpha,\beta\in (0,1]$ and $\phi\colon(T_0,T_1]\to[0,\infty)$ be a locally integrable function satisfying
%\begin{align*}
%  \phi(t) 
%  \leq
%  a(t-T_0)^{\alpha-1}
%  +
%  b\int_{T_0}^t
%    (t-s)^{\beta-1}
%    \phi(s)
%  \diffout s,
%  \quad
%  t\in(T_0,T_1].
%\end{align*}
%Then there exists a constant $C = C(\alpha, \beta, b,T)>0$ such that
%$
%  \phi(t)\leq C a(T_0-t)^{\alpha-1}
%$, 
%$
%  t\in(T_0,T_1]
%$.
\end{lemma}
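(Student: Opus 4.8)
The plan is to regularise the weakly singular kernel by a \emph{finite} number of iterations and then invoke the classical (non-singular) Gronwall lemma for measurable functions. Fix $s\in[0,T]$ and abbreviate $u(t):=\phi(t,s)$ for $t\in[s,T]$, $g(t):=A(t-s)^{-1+\alpha}$, and let $K$ be the linear integral operator $(Kv)(t):=B\int_s^t(t-r)^{-1+\beta}v(r)\,\dl r$. By hypothesis $u\in L^1([s,T])$, and the assumed inequality reads $u\le g+Ku$ pointwise on $[s,T]$. Since the kernel $B(t-r)^{-1+\beta}$ is nonnegative, $K$ preserves the pointwise order of nonnegative functions, so iterating $u\le g+Ku$ gives, for every $N\in\N$,
\[
  u(t)\le\sum_{j=0}^{N-1}(K^jg)(t)+(K^Nu)(t),\qquad t\in[s,T].
\]

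The core computation is the repeated convolution of the power kernel. Using the Beta-function identity $\int_s^t(t-r)^{-1+\beta}(r-s)^{-1+\sigma}\,\dl r=(t-s)^{\sigma+\beta-1}\Gamma(\sigma)\Gamma(\beta)/\Gamma(\sigma+\beta)$, an induction on $j$ yields the explicit formula
\[
  (K^jg)(t)=A\,B^j\,\frac{\Gamma(\alpha)\,\Gamma(\beta)^j}{\Gamma(\alpha+j\beta)}\,(t-s)^{-1+\alpha+j\beta},
\]
and, by the same identity, that $K^N$ acts as convolution against $B^N\Gamma(\beta)^N\Gamma(N\beta)^{-1}(t-r)^{-1+N\beta}$. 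Bounding each factor $(t-s)^{j\beta}$ by $T^{j\beta}$, the finite sum is controlled by $\sum_{j=0}^{N-1}(K^jg)(t)\le C_1\,A\,(t-s)^{-1+\alpha}$ with a constant $C_1=C_1(B,T,\alpha,\beta)$ independent of $s$.

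Next I would choose $N$ so large that $N\beta\ge1$; then the iterated kernel is bounded on $[s,T]$ and, since $u\in L^1([s,T])$,
\[
  (K^Nu)(t)\le B^N\,\frac{\Gamma(\beta)^N}{\Gamma(N\beta)}\,T^{-1+N\beta}\int_s^t u(r)\,\dl r=:B'\int_s^t u(r)\,\dl r .
\]
Combining the last three displays turns the singular inequality into the \emph{non-singular} one $u(t)\le C_1A(t-s)^{-1+\alpha}+B'\int_s^t u(r)\,\dl r$, whose forcing term is integrable because $\alpha>0$. The standard integral form of Gronwall's lemma then gives $u(t)\le C_1A(t-s)^{-1+\alpha}+B'\int_s^t e^{B'(t-r)}C_1A(r-s)^{-1+\alpha}\,\dl r$, and estimating the remaining integral by $e^{B'T}\alpha^{-1}(t-s)^{\alpha}\le e^{B'T}\alpha^{-1}T(t-s)^{-1+\alpha}$ collapses this to $u(t)\le C\,A\,(t-s)^{-1+\alpha}$ with $C=C(B,T,\alpha,\beta)$.

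Because every constant depends only on $B,T,\alpha,\beta$ and all $s$-dependence sits inside the factor $(t-s)\le T$, the resulting bound is uniform in $s$, which is exactly the claim. The main obstacle I anticipate is not any single estimate but rather (i) the careful bookkeeping of the $\Gamma$-factors in the iterated kernels and (ii) the correct use of the integrability hypothesis $\int_s^T\phi(r,s)\,\dl r<\infty$: it is precisely this assumption that makes the remainder $K^Nu$ finite and brings the regularised inequality within the scope of the classical Gronwall lemma. (If $\beta\ge1$ the kernel is already bounded and no iteration is needed, while if $\alpha\ge1$ the forcing term is bounded; in both degenerate cases the argument simplifies accordingly.)
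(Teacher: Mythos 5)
Your proof is correct, and it follows essentially the same route the paper indicates for this lemma: the paper does not write out a proof but refers to the iteration argument of \cite[Lemma 6.3]{elliott1992} combined with the standard Gronwall lemma for measurable functions, which is precisely your strategy of convolving the weakly singular kernel finitely many times (via the Beta-function identity) until it is bounded and then applying the classical integral Gronwall inequality. The $\Gamma$-factor bookkeeping and the use of the integrability hypothesis $\int_s^T\phi(r,s)\,\dl r<\infty$ to control the remainder $K^Nu$ are handled correctly, and all constants depend only on $B,T,\alpha,\beta$ as required.
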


\begin{lemma}[Discrete Gronwall lemma]\label{lem:discrete_gronwall}
Let $T\in(0,\infty)$, $k\in(0,1)$ and $M=M_k\in\bN$ be such that $M k\leq T<(M+1)k$, and set $t_m:=mk$, $m\in\{0,\ldots,M\}$. Let $(\phi_i)_{i=0}^{M}$ be a sequence of nonnegative real numbers.
If
\begin{align*}
  \phi_m\leq A + B\,k\sum_{i=0}^{m-1}t_{m-i}^{-1+\beta}\phi_i,
  \quad
  m\in\{0,\dots,M\},
\end{align*}
for some constants $A,B\in[0,\infty)$, $\beta\in(0,1]$, then there exists a constant $C=C(B,T,\beta)\in[0,\infty)$ such that $\phi_m\leq C\,A$, $m\in \{0,\dots,M\}$.
%Let $a,b,T>0$, $\beta\in(0,1]$, $M\in\N$, $k=\tfrac{T}M$, and $t_m:=km$ for $m\in\{0,\dots,M\}$. Further, let $(\phi_m)_{m=0}^M$ be positive real numbers satisfying
%\begin{align*}
%  \phi_m\leq a + b\,k\sum_{i=0}^{m-1}t_{m-i}^{\beta-1}\phi_i,
%  \quad
%  m\in\{1,\dots,M\}.
%\end{align*} 
%Then there exists a constant $C=C(\beta, b, T)>0$ such that
%$
%  \phi_m\leq C a
%$, 
%$
%  m\in \{1,\dots,M\}
%$.
\end{lemma}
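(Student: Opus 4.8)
The plan is to reduce the weakly singular recursion to an ordinary, non-singular discrete Gronwall inequality by iterating it a fixed number of times. First I would absorb the step size into the kernel: since $t_{m-i}=(m-i)k$, the hypothesis reads
\[
  \phi_m\leq A + B\,k^\beta\sum_{i=0}^{m-1}(m-i)^{-1+\beta}\phi_i,
\]
so it is natural to introduce the linear operator $K$ acting on nonnegative sequences by $(K\psi)_m:=Bk^\beta\sum_{i=0}^{m-1}(m-i)^{-1+\beta}\psi_i$ and to write the hypothesis componentwise as $\phi\leq A\mathbf 1+K\phi$, where $\mathbf 1=(1,\dots,1)$. Since $K$ has a nonnegative kernel it is monotone and linear, and iterating the inequality gives, for every $N\in\bN$,
\[
  \phi\leq A\sum_{j=0}^{N-1}K^j\mathbf 1 + K^N\phi
\]
componentwise, which I would confirm by a one-line induction (substitute $\phi\leq A\mathbf 1+K\phi$ into $K^{N-1}\phi$).

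The core of the argument is controlling the iterated kernels. Writing $(K^j\psi)_m=\sum_{i<m}\kappa_j(m,i)\psi_i$, the crucial input is the discrete Beta-type estimate
\[
  \sum_{p=1}^{\ell-1}(\ell-p)^{-1+\beta}p^{-1+\beta}\leq c_\beta\,\ell^{-1+2\beta},\quad \ell\geq1,
\]
obtained by comparison with $\int_0^\ell(\ell-x)^{-1+\beta}x^{-1+\beta}\,\dl x=c_\beta\,\ell^{-1+2\beta}$ (and an analogous bound with mixed exponents). Convolving the kernel with itself and applying this bound inductively yields $\kappa_j(m,i)\leq C_j\,B^j k^{j\beta}(m-i)^{-1+j\beta}$, with constants $C_j$ depending only on $\beta$ and $j$. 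I would then fix $N:=\lceil1/\beta\rceil$, so that $-1+N\beta\geq0$; using $m-i\leq M\leq T/k$ together with $k^{N\beta}(T/k)^{-1+N\beta}=T^{-1+N\beta}k$ shows that the $N$-fold iterated kernel is non-singular and uniformly bounded in $h,k,m,i$:
\[
  \kappa_N(m,i)\leq C_N\,B^N T^{-1+N\beta}\,k=:\tilde B\,k.
\]
The factor-of-$k$ bookkeeping here is precisely what guarantees that the final constant does not degenerate as $k\to0$.

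To finish, I would bound the finite sum $A\sum_{j=0}^{N-1}K^j\mathbf 1$: summing $\kappa_j(m,i)$ over $i$ and using $k^{j\beta}\sum_{\ell=1}^{m}\ell^{-1+j\beta}\leq C\,t_m^{j\beta}\leq C\,T^{j\beta}$ shows each term is at most a constant times $A$, whence $A\sum_{j=0}^{N-1}K^j\mathbf 1|_m\leq C'A$ with $C'=C'(B,T,\beta)$. Combining this with $K^N\phi|_m\leq \tilde B\,k\sum_{i=0}^{m-1}\phi_i$, the iterated inequality becomes $\phi_m\leq C'A+\tilde B\,k\sum_{i=0}^{m-1}\phi_i$, a standard non-singular discrete Gronwall inequality whose elementary solution $\phi_m\leq C'A\prod_{i=0}^{m-1}(1+\tilde B k)\leq C'A\,e^{\tilde B t_m}\leq C'e^{\tilde B T}A$ yields the claim with $C=C'e^{\tilde B T}$, depending only on $B$, $T$, $\beta$.

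I expect the main obstacle to be the discrete Beta-function estimate and, more precisely, the careful tracking of the constants $C_j$ and of the powers of $k$ through the $N$-fold convolution so that uniformity in $k$ is preserved; the singularity of the kernel near the diagonal (for $i$ close to $m$) is exactly what makes a naive single-step Gronwall argument fail and forces the iteration. As an alternative I could interpolate $(\phi_i)$ to a piecewise-constant function, dominate the discrete sum by $\int_0^{t_m}(t_m-s)^{-1+\beta}(\cdot)\,\dl s$, and invoke the continuous Lemma~\ref{lem:gronwall} with $\alpha=1$; but comparing the singular diagonal term of the sum with the integral requires the same kind of care, so I would favour the direct iteration.
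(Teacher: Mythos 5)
Your proof is correct. The paper gives no proof of this lemma, referring instead to \cite[Lemma~A.4]{kruse2013} and \cite[Lemma~7.1]{elliott1992}, and your argument --- iterating the weakly singular kernel $N=\lceil 1/\beta\rceil$ times via the discrete Beta estimate until it becomes bounded of order $k$, then applying the classical non-singular discrete Gronwall inequality --- is essentially the standard proof found in those references, with the powers of $k$ tracked correctly so that the final constant depends only on $B$, $T$, $\beta$.
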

\color{black}

\bibliographystyle{plain}
\bibliography{../litLevy}

\def\cprime{$'$} \def\polhk#1{\setbox0=\hbox{#1}{\ooalign{\hidewidth
  \lower1.5ex\hbox{`}\hidewidth\crcr\unhbox0}}}
\begin{thebibliography}{10}

\bibitem{AnderssonKovacsLarsson}
A.~Andersson, M.~Kov\'{a}cs, and S.~Larsson.
\newblock Weak and strong error analysis for semilinear stochastic {V}olterra
  equations.
\newblock {\em J. Math. Anal. Appl.}, 437:1283--1304, 2016.

\bibitem{AnderssonKruseLarsson}
A.~Andersson, R.~Kruse, and S.~Larsson.
\newblock Duality in refined {S}obolev-{M}alliavin spaces and weak
  approximation of {SPDE}.
\newblock {\em J. SPDE Anal. Comp.}, 4(1):113--149, 2016.

\bibitem{AnderssonLarsson2016}
A.~Andersson and S.~Larsson.
\newblock Weak convergence for a spatial approximation of the nonlinear
  stochastic heat equation.
\newblock {\em Math. Comp.}, 85(299):1335--1358, 2016.

\bibitem{AnderssonLindner2017a}
A.~Andersson and F.~Lindner.
\newblock {P}oisson {M}alliavin calculus in {H}ilbert space with an application
  to {S}{P}{D}{E}.
\newblock {\em ArXiv preprint, arXiv:1703.07259}, 2017.

\bibitem{Barth2016}
A.~Barth and T.~St\"{u}we.
\newblock Weak convergence of {G}alerkin approximations of stochastic partial
  differential equations driven by additive {L}évy noise.
\newblock {\em Math. Comput. Simulation}, 143:215--225, 2018.

\bibitem{birnir2013}
B.~Birnir.
\newblock {\em The Kolmogorov-Obukhov statistical theory of turbulence}.
\newblock Springer Briefs in Mathematics. Springer, New York, 2013.

\bibitem{Brehier2017}
C.-E. {Br{\'e}hier}.
\newblock {Influence of the regularity of the test functions for weak
  convergence in numerical discretization of SPDEs}.
\newblock {\em ArXiv preprint, arXiv:1709.09370}, September 2017.

\bibitem{BrehierDebussche2017}
C.-E. {Br{\'e}hier} and A.~{Debussche}.
\newblock {Kolmogorov equations and weak order analysis for SPDEs with
  nonlinear diffusion coefficient}.
\newblock {\em ArXiv preprint, arXiv:1703.01095}, March 2017.

\bibitem{BrehierGoudenege2018}
C.-E. {Br{\'e}hier} and L.~{Gouden{\`e}ge}.
\newblock {Weak convergence rates of splitting schemes for the stochastic
  Allen-Cahn equation}.
\newblock {\em ArXiv preprint, arXiv:1804.04061}, April 2018.

\bibitem{BrehierHairerStuart}
C.-E. Br\'ehier, M.~Hairer, and A.~M. Stuart.
\newblock Weak error estimates for trajectories of {SPDE}s under spectral
  {G}alerkin discretization.
\newblock {\em J. Comput. Math.}, 36(2):159--182, 2018.

\bibitem{Conus2014}
D.~Conus, A.~Jentzen, and R.~Kurniawan.
\newblock Weak convergence rates of spectral {G}alerkin approximations for
  {SPDE}s with nonlinear diffusion coefficients.
\newblock {\em Accepted in Ann. Appl. Probab., arXiv:1408.1108}, 2014.

\bibitem{debussche2011}
A.~Debussche.
\newblock Weak approximation of stochastic partial differential equations: the
  nonlinear case.
\newblock {\em Math. Comp.}, 80(273):89--117, 2011.

\bibitem{elliott1992}
C.~M. Elliott and S.~Larsson.
\newblock Error estimates with smooth and nonsmooth data for a finite element
  method for the {C}ahn-{H}illiard equation.
\newblock {\em Math. Comp.}, 58(198):603--630, S33--S36, 1992.

\bibitem{Hausenblas2010}
E.~Hausenblas.
\newblock Weak approximation of the stochastic wave equation.
\newblock {\em J. Comput. Appl. Math.}, 235(1):33--58, 2010.

\bibitem{Hefter2016}
M.~Hefter, A.~Jentzen, and R.~Kurniawan.
\newblock Weak convergence rates for numerical approximations of stochastic
  partial differential equations with nonlinear diffusion coefficients in {UMD}
  {B}anach spaces.
\newblock {\em ArXiv preprint, arXiv:1612.03209}, 2016.

\bibitem{JentzenNauroisWelti2015}
L.~{Jacobe de Naurois}, A.~{Jentzen}, and T.~{Welti}.
\newblock {Weak convergence rates for spatial spectral Galerkin approximations
  of semilinear stochastic wave equations with multiplicative noise}.
\newblock {\em ArXiv preprint, arXiv:1508.05168}, August 2015.

\bibitem{JentzenNauroisWelti2017}
L.~{Jacobe de Naurois}, A.~{Jentzen}, and T.~{Welti}.
\newblock {Lower bounds for weak approximation errors for spatial spectral
  Galerkin approximations of stochastic wave equations}.
\newblock {\em ArXiv preprint, arXiv:1701.04351}, January 2017.

\bibitem{Jentzen2015}
A.~Jentzen and R.~Kurniawan.
\newblock Weak convergence rates for {E}uler-type approximations of semilinear
  stochastic evolution equations with nonlinear diffusion coefficients.
\newblock {\em ArXiv preprint, arXiv:1501.03539}, 2015.

\bibitem{kallianpur1995}
G.~Kallianpur and J.~Xiong.
\newblock {\em Stochastic Differential Equations in Infinite Dimensional
  Spaces}, volume~26 of {\em Institute of Mathematical Statistics, Lecture
  Notes}.
\newblock SIAM, Philadelphia, 1995.

\bibitem{LangKirchnerLarsson2017}
K.~Kirchner, A.~Lang, and S.~Larsson.
\newblock Covariance structure of parabolic stochastic partial differential
  equations with multiplicative {L}évy noise.
\newblock {\em Journal of Differential Equations}, 262(12):5896--5927, 2017.

\bibitem{KovLinSch2015}
M.~Kov{\'a}cs, F.~Lindner, and R.~L. Schilling.
\newblock Weak convergence of finite element approximations of linear
  stochastic evolution equations with additive {L}évy noise.
\newblock {\em SIAM/ASA J. Uncertainty Quantification}, 3(1):1159--1199, 2015.

\bibitem{kruse2013}
R.~Kruse.
\newblock {\em Strong and weak approximation of stochastic evolution
  equations}, volume 2093 of {\em Lecture Notes in Math.}
\newblock Springer, 2014.

\bibitem{LangPettersson}
A.~Lang and A.~Petersson.
\newblock Monte {C}arlo versus multilevel {M}onte {C}arlo in weak error
  simulations of {SPDE} approximations.
\newblock {\em Math. Comput. Simulation}, 143:99--113, 2018.

\bibitem{Last2014}
G.~Last.
\newblock Stochastic analysis for {P}oisson processes.
\newblock In Giovanni Peccati and Matthias Reitzner, editors, {\em Stochastic
  Analysis for Poisson Point Processes: Malliavin Calculus, Wiener-It{\^o}
  Chaos Expansions and Stochastic Geometry}, pages 1--36. Springer
  International Publishing, Cham, 2016.

\bibitem{LindnerSchilling}
F.~Lindner and R.~L. Schilling.
\newblock Weak order for the discretization of the stochastic heat equation
  driven by impulsive noise.
\newblock {\em Potential Anal.}, 38(2):345--179, 2012.

\bibitem{pazy1983}
A.~Pazy.
\newblock {\em Semigroups of Linear Operators and Applications to Partial
  Differential Equations}, volume~44 of {\em Applied Mathematical Sciences}.
\newblock Springer, New York, 1983.

\bibitem{PesZab2007}
S.~Peszat and J.~Zabczyk.
\newblock {\em Stochastic Partial Differential Equations with {L}évy Noise: An
  Evolution Equation Approach}.
\newblock Encyclopedia of Mathematics and its Applications. Cambridge
  University Press, 2007.

\bibitem{Picard1996a}
J.~Picard.
\newblock Formules de dualité sur l'espace de {P}oisson.
\newblock {\em Annales de l'I.H.P. Probabilités et statistiques},
  32(4):509--548, 1996.

\bibitem{Wang2016}
X.~Wang.
\newblock Weak error estimates of the exponential euler scheme for semi-linear
  spdes without malliavin calculus.
\newblock {\em Discrete \& Continuous Dynamical Systems - A}, 36:481--497,
  2016.

\bibitem{WangGan}
X.~Wang and S.~Gan.
\newblock Weak convergence analysis of the linear implicit {E}uler method for
  semilinear stochastic partial differential equations with additive noise.
\newblock {\em J. Math. Anal. Appl.}, 398(1):151--169, 2013.

\end{thebibliography}

\end{document}